\newtheorem{thm}{Theorem}[section]
\newtheorem{cor}[thm]{Corollary}
\newtheorem{lem}[thm]{Lemma}
\newtheorem{prop}[thm]{Proposition}
\newtheorem{prob}[thm]{Question}
\newtheorem{exam}[thm]{Example}
\theoremstyle{remark}
\newtheorem{rmk}[thm]{Remark}
\theoremstyle{definition}
\newtheorem{defi}[thm]{Definition}
\newcommand{\CBbb}{\mathbb C}
\newcommand{\PBbb}{\mathbb P}
\newcommand{\RBbb}{\mathbb R}
\newcommand{\ZBbb}{\mathbb Z}
\newcommand{\Ecal}{\mathcal E}
\newcommand{\Fcal}{\mathcal F}
\newcommand{\Ical}{\mathcal I}
\newcommand{\Lcal}{\mathcal L}
\newcommand{\Ocal}{\mathcal O}
\newcommand{\Qcal}{\mathcal Q}
\newcommand{\Rcal}{\mathcal R}
\newcommand{\Tcal}{\mathcal T}
\newcommand{\Xcal}{\mathcal X}
\DeclareMathOperator{\Ext}{Ext}
\DeclareMathOperator{\Tor}{Tor}
\numberwithin{equation}{section}
\begin{document}
\title[Singularity formation]{Bubbling of rank two bundles over surfaces}
\author[Chen]{Xuemiao Chen}
\begin{abstract}
    In this paper, motivated by the singularity formation of ASD connections in gauge theory, we study an algebraic analogue of the singularity formation of families of rank two holomorphic vector bundles over surfaces. For this, we define a notion of fertile families bearing bubbles and give a characterization of it using the related discriminant. Then we study families that locally form the singularity of the type $\Ocal \oplus \Ical$ where $\Ical$ is an ideal sheaf defining points with multiplicities. We prove the existence of  fertile families bearing bubbles by using elementary modifications of the original family. As applications, we study bubble trees for a few families that form singularities of low multiplicities and use examples to give negative answers to some plausible general questions.
\end{abstract}

\thanks{}
\address{Department of pure mathematics, University of Waterloo, Ontario, Canada, N2L 3G1}\email{x67Chen@uwaterloo.ca}
\maketitle
\thispagestyle{empty}

\maketitle
\thispagestyle{empty}

\bibliographystyle{amsplain}

\tableofcontents
\section{Introduction}
Given a sequence of ASD connections with uniformly bounded $L^2$ norms on curvature over a Riemannian four manifold, the Uhlenbeck compactness (\cite{DonaldsonKronheimer:90, Uhlenbeck:82a, Uhlenbeck:82b}) states that by passing to a subsequence, up to gauge transforms, the subsequence converges to an ASD connection away from finitely many points. Those points are usually referred as the bubbling points. By rescaling properly near any of these points, this sequence converges to an ASD connection $A_\infty$ over $\RBbb^4$. Depending on the rescalings, the following could happen 
\begin{itemize}
    \item no bubbling happens at infinity of $\RBbb^4$ and $A_\infty$ is flat;
    \item no bubbling happens at infinity of $\RBbb^4$ and $A_\infty$ is not flat;
    \item bubbling happens at infinity.
\end{itemize}
Here the second case is of significant interest since then $A_\infty$ could be viewed as the bubble on the first level of a bubble tree that memorizes all the energy loss nontrivially (we will not give a formal definition for the concept of bubble trees and hope the description justifies itself in a natural way). Actually, the first case is also of interest since at least the rescalings guarantee that the multiplicity of the bubbling point is captured over $\RBbb^4$ and by rescaling faster, one might be able to see the second case again. Through the celebrated gluing construction for ASD instantons by Taubes, examples of the second case exist abundantly (\cite{Taubes82}). The bubbling phenomenon is also one of the key features in Donaldson theory (\cite{Donaldson83} \cite{DonaldsonKronheimer:90}). 

Through a conformal change, by Uhlenbeck's removable singularity theorem (\cite{Uhlenbeck:82b}), the limiting bubbling connection $A_\infty$ in turn gives an ASD connection over $S^4$ where $c_2(A_\infty)$ is less or equal to the multiplicity of the bubbling point where the equality corresponds to the first two cases above. By the celebrated ADHM construction (\cite{ADHM}), this corresponds to a special class of holomorphic vector bundles over $\PBbb^3$, usually referred as instanton bundles, which can be described by a set of linear algebra data. Donaldson later showed (\cite{Donaldson:84}) that this can be also identified with holomorphic vector bundles over $\PBbb^2=\CBbb^2 \cup \PBbb^1$ which satisfy the condition of triviality at the infinity $\PBbb^1$, i.e., it restricts to a trivial bundle over $\PBbb^1$. To be more precise, a trivialization needs to be fixed along $\PBbb^1$.

In this paper, we study an algebraic analogue of the bubbling phenomenon for families of rank two bundles over surfaces motivated by Donaldson's interpretation of the bubbling connections. More precisely, we consider a rank two reflexive sheaf $\Ecal$ over $B=\{(x,y,z)\in \CBbb^3: |x|^2+|y|^2+|z|^2<1\}$. Then the singular set of $\Ecal$ is known to consist of isolated points. Assume it contains only $0$, then $\Ecal$ can be viewed locally as a family of holomorphic vector bundles forming a point singularity at $0\in B_0=B\cap (z=0)$. A very important quantity related to the gauge theoretical study is the so-called multiplicity, which algebraically is given by
$$
k(\Ecal)= \text{ the dimension of the stalk of } \Ecal|_{B_0}^{**}/\Ecal|_{B_0} \text{ at } 0.
$$
Let $p: \widehat{B} \rightarrow B$ denote the blow-up at $0\in B$ with the exceptional divisor $\PBbb^2$. The central fiber $B_0$ naturally selects out the line at infinity $\PBbb^1\subset \PBbb^2$. Then we can look at all the possible extensions of $(p^*\Ecal)|_{\widehat{B}\setminus \PBbb^2}$ across $\PBbb^2$. Motivated by the interpretations of the ASD instantons by Donaldson, we want to find extensions $\widehat{\Ecal}$ of $(p^*\Ecal)|_{\widehat{B}\setminus \PBbb^2}$ across the exceptional divisor so that 
$$
\widehat{\Ecal}|_{\PBbb^1}\cong \Ocal_{\PBbb^1}^{\oplus 2}
$$
and to give nontrivial bubbling, we further require that 
$$
\widehat{\Ecal}|_{\PBbb^2}^{**} \ncong \Ocal_{\PBbb^2}^{\oplus 2}.
$$
If such extensions exist, we call the given families \emph{fertile} and $\widehat{\Ecal}|_{\PBbb^2}$ the \emph{bubble} of the family. Corresponding to the analytic picture above, if only the first condition is satisfied, we call them \emph{cone} families. We also refer the fertile and cone familie together as non-barren families. If there exists no extensions which restricts to the trivial bundle over $\PBbb^1$, the families will be referred as \emph{barren}. For a fertile family, by definition, the bubble $\widehat{\Ecal}|_{\PBbb^2}$ is semistable. Thus it is related to the notion of optimal extensions studied in a more general context in \cite{ChenSun:20b}. In particular, if such a semistable extension exists, it must be given by the optimal extension obtained in \cite{ChenSun:20b}. However, examples do exist where the optimal extensions do not satisfy the triviality at infinity, i.e., $\widehat{\Ecal}|_{\PBbb^1}\ncong \Ocal_{\PBbb^1}^{\oplus 2}$ (See Proposition \ref{BarrenFamily}). This is natural in the sense that as in the analytic case, to obtain bubbling, one needs to choose rescalings carefully. The same is expected to be true in the algebraic setting where this simply means the fixed family is not rescaled correctly and we need to ``rescale" the family properly to gain fertile families forming the same singularity. Another key difficulty lies in the triviality condition over $\PBbb^1$ which is a restriction on codimension two subset. Namely, for reflexive sheaves, if we know two such sheaves are isomorphic outside some codimension two subvariety, then they must be isomorphic, thus leaving no room for modifications if we want to stay in the realm of reflexive sheaves, which is unlike the modifications along a negative divisor used in \cite{ChenSun:20b}. Also, the restriction of a reflexive sheaf to a codimension two subvariety could behave very badly, for example, it could carry torsion unlike the case of restriction to a smooth hypersurface where we always get a torsion free sheaf. Those are two of the main technical points that we need to deal with in order to obtain a fertile family from a given family.

Now we state the main results of this paper.
\subsection*{Main results} Below we denote the discriminant of a torsion free sheaf $\underline{\Fcal}$ over $\PBbb^2$ as 
$$
\Delta(\underline{\Fcal})=4c_2(\underline{\Fcal})-c_1(\underline{\Fcal})^2 \in \ZBbb.
$$
We first give a characterization for when a given extension has nice geometric properties such as the restriction being semistable and triviality at infinity (see Section \ref{Bubble}). 

\begin{thm}\label{Thm1.1}
    Given a rank two reflexive sheaf $\Ecal$ over $B$, then 
    \begin{enumerate}
         \item an extension $\widehat{\Ecal}$ of $\Ecal$ is semistable, i.e., $\widehat{\Ecal}|_{\PBbb^2}$ is semistable, if and only if $\Delta(\widehat{\Ecal}|_{\PBbb^2})$ is the largest among all the reflexive extensions;
         
        \item any reflexive extension $\widehat{\Ecal}$ of $\Ecal$ satisfies 
    $$
    \frac{\Delta(\widehat{\Ecal}|_{\PBbb^2})}{4}\leq k(\Ecal);
    $$  
      
       \item 
    an extension $\widehat{\Ecal}$ satisfies 
    $$
    \widehat{\Ecal}|_{\PBbb^1}\cong \Ocal_{\PBbb^1}^{\oplus 2}
    $$
    if and only if 
    $$
    \frac{\Delta(\widehat{\Ecal}|_{\PBbb^2})}{4}=k(\Ecal).
    $$
    \end{enumerate}
   In particular, a family is fertile if and only if $\Delta(\widehat{\Ecal}|_{\PBbb^2})=k(\Ecal)$ and 
    $$
    \widehat{\Ecal}|_{\PBbb^2}^{**} \ncong \Ocal_{\PBbb^2}^{\oplus 2}.
    $$
\end{thm}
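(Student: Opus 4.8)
The plan is to establish the three numbered items in the order (2), (3), (1) and then deduce the ``in particular'' clause. Two preliminary observations. First, after fixing the framing at infinity (equivalently, normalizing $\det\widehat{\Ecal}$ as in Section~\ref{Bubble}, which is legitimate because $\det\Ecal\cong\Ocal_{B}$ on the ball) we may take $c_{1}(\widehat{\Ecal}|_{E})=0$, where $E:=p^{-1}(0)\cong\PBbb^{2}$; then a rank-two degree-zero bundle on the line $\PBbb^{1}\subset E$ is trivial exactly when it is semistable, which is how we detect the triviality-at-infinity condition. Second, any two such extensions differ by finitely many elementary transformations along $E$ with centers curves $C\subset E$ (equivalently, of codimension two in $\widehat B$, so that $\det$ and reflexivity are preserved): for $0\to\widehat{\Ecal}'\to\widehat{\Ecal}\to i_{*}\mathcal Q\to0$ with $\mathcal Q$ supported on $C$ and $i\colon E\hookrightarrow\widehat B$, the identification $\mathcal{T}or_{1}(i_{*}\mathcal Q,\Ocal_{E})\cong\mathcal Q\otimes N_{E/\widehat B}^{\vee}=\mathcal Q(1)$ (with $N_{E/\widehat B}=\Ocal_{\PBbb^{2}}(-1)$) yields $0\to\mathcal Q(1)\to\widehat{\Ecal}'|_{E}\to\widehat{\Ecal}|_{E}\to\mathcal Q\to0$, and a Chern-character computation gives $c_{1}(\widehat{\Ecal}|_{E})$ unchanged and $\Delta(\widehat{\Ecal}'|_{E})=\Delta(\widehat{\Ecal}|_{E})-4\,(c_{1}(\mathcal Q)\cdot h)\le\Delta(\widehat{\Ecal}|_{E})$, with $h$ the line class on $\PBbb^{2}$; thus passing to a subsheaf lowers $\Delta|_{E}$ and passing to a larger extension raises it. (The finitely many non--locally-free points of $\widehat{\Ecal}$ only contribute an extra nonnegative summand $4\operatorname{length}((\widehat{\Ecal}|_{E})^{**}/\widehat{\Ecal}|_{E})$ to $\Delta(\widehat{\Ecal}|_{E})$ and are carried along harmlessly.)

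To link the $\PBbb^{2}$-quantity $\Delta(\widehat{\Ecal}|_{E})$ with the surface quantity $k(\Ecal)$ I would restrict to $\widehat{B_{0}}$, the blow-up $q\colon\widehat{B_{0}}\to B_{0}$ of the surface ball $B_{0}$ at $0$, which meets $E$ along $\PBbb^{1}$. Write $\Ecal_{0}:=\Ecal|_{B_{0}}$, torsion-free with $\Ecal_{0}^{**}\cong\Ocal_{B_{0}}^{\oplus2}$ and $k(\Ecal)=\operatorname{length}_{0}(\Ecal_{0}^{**}/\Ecal_{0})$. For (2) the plan is a conservation-of-second-Chern-class computation on $\widehat{B_{0}}$ applied to $\widehat{\Ecal}|_{\widehat{B_{0}}}$, whose reflexive hull is a vector bundle on the smooth surface $\widehat{B_{0}}$: pushing forward to $B_{0}$ and comparing colengths before and after the blow-up expresses $k(\Ecal)$ as a sum of three manifestly nonnegative contributions --- a term equal to $\tfrac14\Delta(\widehat{\Ecal}|_{E})$ (what the modification carries onto the exceptional $\PBbb^{2}$), a term recording the splitting type $\Ocal(a)\oplus\Ocal(-a)$ with $a\ge0$ of $\widehat{\Ecal}|_{\PBbb^{1}}$, and the length of the remaining point defect of $\widehat{\Ecal}|_{\widehat{B_{0}}}$ along $\PBbb^{1}$ --- whence $\tfrac14\Delta(\widehat{\Ecal}|_{E})\le k(\Ecal)$. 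Part (3) drops out of the same identity: equality forces the latter two terms to vanish, hence $a=0$, i.e.\ $\widehat{\Ecal}|_{\PBbb^{1}}\cong\Ocal_{\PBbb^{1}}^{\oplus2}$; conversely, triviality at infinity forces the extension of $q^{*}\Ecal_{0}$ across $\PBbb^{1}$ carried by $\widehat{\Ecal}|_{\widehat{B_{0}}}$ to absorb all the colength of $\Ecal_{0}$, which kills the two correction terms and gives equality. I expect this accounting to be the crux, and precisely for the reason stressed in the introduction: $\widehat{\Ecal}|_{\PBbb^{1}}$ sits in codimension two and may carry torsion, and a reflexive sheaf cannot be adjusted along $\PBbb^{1}$, so the argument has to be routed through $\widehat{\Ecal}|_{\widehat{B_{0}}}$ and its double dual on the smooth surface $\widehat{B_{0}}$ rather than through a double restriction, and one must verify that the three contributions are genuinely separate and individually nonnegative.

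For (1), the key input is that if $\widehat{\Ecal}|_{E}$ is not semistable, then the determinant-preserving elementary transformations along $E$ built from the Harder--Narasimhan filtration of $\widehat{\Ecal}|_{E}$ (the mechanism behind the optimal extension of \cite{ChenSun:20b}) produce a reflexive extension with strictly larger $\Delta|_{E}$, by the change-of-$\Delta$ formula of the first paragraph. Since $\Delta$ is bounded above by $4k(\Ecal)$ by (2), iterating this terminates at a semistable reflexive extension, which --- being the optimal extension --- is unique; hence, starting from any reflexive extension we reach this same semistable one through a chain of strict increases of $\Delta|_{E}$, so every reflexive extension has $\Delta|_{E}$ no larger than that of the (unique) semistable extension, with equality exactly when it is semistable. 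This gives ``$\widehat{\Ecal}|_{E}$ semistable $\iff$ $\Delta(\widehat{\Ecal}|_{E})$ maximal among reflexive extensions''. The ``in particular'' clause now follows formally: by definition the family is fertile iff some reflexive extension $\widehat{\Ecal}$ satisfies $\widehat{\Ecal}|_{\PBbb^{1}}\cong\Ocal_{\PBbb^{1}}^{\oplus2}$ and $\widehat{\Ecal}|_{\PBbb^{2}}^{**}\ncong\Ocal_{\PBbb^{2}}^{\oplus2}$; by (3) the first condition is equivalent to $\tfrac14\Delta(\widehat{\Ecal}|_{\PBbb^{2}})=k(\Ecal)$; and by (1)--(2) an extension realizing this equality realizes the maximal discriminant $4k(\Ecal)$, hence is the unique semistable (optimal) extension --- so fertility is exactly the statement that this extension has $\tfrac14\Delta(\widehat{\Ecal}|_{\PBbb^{2}})=k(\Ecal)$ together with $\widehat{\Ecal}|_{\PBbb^{2}}^{**}\ncong\Ocal_{\PBbb^{2}}^{\oplus2}$.
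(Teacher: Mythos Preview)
Your overall strategy shares the right ingredients with the paper---elementary modifications for (1), and a pushforward/Riemann--Roch accounting linking $c_2(\widehat{\Ecal}|_{\PBbb^2})$ to $k(\Ecal)$ for (3)---but the execution has two genuine gaps. First, the preliminary setup: you cannot always normalize to $c_1(\widehat{\Ecal}|_E)=0$ while keeping semistability, since the unique semistable extension may have $c_1=-1$ (Proposition~\ref{BarrenFamily}); the paper's normalized extension allows $c_1\in\{0,-1\}$. Relatedly, the modifications that actually drive (1) are \emph{not} the determinant-preserving torsion-quotient ones you write down; they are rank-one-quotient modifications $0\to\widehat{\Ecal}'\to\widehat{\Ecal}\to\iota_*\underline{\Qcal}\to0$ along the HN quotient $\underline{\Qcal}$ of $\widehat{\Ecal}|_{\PBbb^2}$, which shift $c_1$ by one. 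The paper computes directly (Proposition~\ref{Prop2.6}) that $\Delta(\widehat{\Ecal}'|_{\PBbb^2})=\Delta(\widehat{\Ecal}|_{\PBbb^2})+2(\deg\underline{\Lcal}-\deg\underline{\Qcal})-1\geq\Delta(\widehat{\Ecal}|_{\PBbb^2})+1$ when $\underline{\Lcal}$ destabilizes; your curve-supported-$\mathcal Q$ formula, though correct, does not connect to destabilization.

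Second, and more seriously, your proposed ``conservation-of-$c_2$ on $\widehat{B_0}$'' identity for (2)--(3) is not justified as stated. The quantity $\tfrac14\Delta(\widehat{\Ecal}|_E)=c_2(\widehat{\Ecal}|_{\PBbb^2})$ lives on $E=\PBbb^2$, while your computation lives on $\widehat{B_0}$; these surfaces meet only along $\PBbb^1$, and there is no way to read $c_2(\widehat{\Ecal}|_{\PBbb^2})$ off of $\widehat{\Ecal}|_{\widehat{B_0}}$ alone. The paper makes the link on the \emph{threefold}: it pushes forward $0\to\widehat{\Ecal}(-\widehat{B_0})\to\widehat{\Ecal}\to\widehat{\Ecal}|_{\widehat{B_0}}\to0$ (using $\Ocal(-\widehat{B_0})\cong\Ocal(1)$) and computes $\chi'(R^\bullet p_*)$ via Riemann--Roch on $\PBbb^2$ (Lemma~\ref{Thm1.5}, Corollary~\ref{computation}); this needs $H^0(\PBbb^2,\widehat{\Ecal}|_{\PBbb^2}(-1))=0$, i.e.\ semistability, so it does not apply to an arbitrary extension as your order (2)$\to$(3)$\to$(1) demands. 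Accordingly the paper runs (1) first, then the forward direction of (3), and for (2) it takes a route quite different from yours: Grauert--M\"ulich gives a generic line $H\subset\PBbb^2$ on which the semistable $\widehat{\Ecal}|_{\PBbb^2}$ is balanced, the (3)-computation with $B_0$ replaced by $B_0^H$ yields $c_2(\widehat{\Ecal}|_{\PBbb^2})=k^g(\Ecal)$, and upper-semicontinuity of $H\mapsto\text{length}((\Ecal|_{B_0^H})^{**}/\Ecal|_{B_0^H})$ over $(\PBbb^2)^*$ gives $k^g(\Ecal)\le k(\Ecal)$ (Proposition~\ref{Cor2.11}). The backward direction of (3) also has content you only gesture at: one must rule out torsion in $\widehat{\Ecal}|_{\PBbb^1}$ and then force the splitting type to be $(0,0)$, which the paper does by pushing forward $0\to\widehat{\Ecal}|_{\widehat{B_0}}\to\widehat{\Ecal}|_{\widehat{B_0}}(1)\to(\widehat{\Ecal}|_{\PBbb^1})(-1)\to0$ and using $R^1p_*(\widehat{\Ecal}|_{\widehat{B_0}})=0$ (Proposition~\ref{BubbleCharaterization}).
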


\begin{rmk}
This in particular gives an algebraic analogue of no loss of energy at infinity in the analytic case above that the nonbarren families can be characterized by the condition of triviality at infinity. 
\end{rmk}

Then we make progress in understanding the bubbling phenomenon by studying families $\Ecal$ forming singularities of the type $\Ocal \oplus \Ical$. More precisely, we assume 
$$
\Ecal|_{B_0}\cong \Ocal_{B_0} \oplus \Ical
$$
for some ideal sheaf $\Ical$ which locally defines the origin of $B_0$ with multiplicities, i.e., $\Ocal_{B_0}/\Ical$ is supported at $0\in B_0$ and in this case, the multiplicity is given by 
$$
k(\Ecal)= \text{ the dimension of the stalk of } \Ocal_{B_0}/\Ical \text{ at } 0.
$$

The second main result of this paper is the existence of fertile families via certain natural elementary modifications of the given algebraic family which forms the singularity $\Ocal_{B_0} \oplus \Ical$.

\begin{thm}\label{Main}
    Given any family $\Ecal$ which forms the singularity $\Ocal_{B_0} \oplus \Ical$, after finitely many elementary modifications, $\Ecal$ can be transformed to a fertile family which still forms the singularity $\Ocal_{B_0} \oplus \Ical$ with the bubble being strictly semistable. More precisely, the following holds 
    \begin{enumerate}
        \item After finitely many elementary modifications along the $\Ocal_{B_0}$ factor, the families form the same singularity and will be cone families with isomorphic cones except at most one step corresponding to a fertile family; for a non-barren family, the elementary modifications along $\Ocal_{B_0}$ are cone families with the semistable extension $\widehat{\Ecal}$ satisfying 
        $$
        \widehat{\Ecal}|_{\PBbb^2} \cong \Ocal_{\PBbb^2} \oplus \underline{\Ical}
        $$
        for a fixed ideal sheaf $\underline{\Ical}$ which defines a subscheme of $\PBbb^2$ supported at $[0,0,1]$.

        \item For any cone family locally forming the singularity $\Ocal_{B_0} \oplus \Ical$, by doing elementary modifications along $\Ical$, the cone families will become fertile after finitely many steps. Furthermore, the semistable extension $\widehat{\Ecal}$ of the fertile family satisfies 
        $$
        0\rightarrow\Ocal_{\PBbb^2} \rightarrow \widehat{\Ecal}|_{\PBbb^2} \rightarrow \underline{\Ical} \rightarrow 0
        $$
        where $\underline{\Ical}$ is the same ideal sheaf as for the cone families above.
    \end{enumerate}
\end{thm}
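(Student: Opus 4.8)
\emph{Strategy.} The plan is to prove the two refined statements and chain them: from an arbitrary $\Ecal$ forming $\Ocal_{B_0}\oplus\Ical$, finitely many elementary modifications along the $\Ocal_{B_0}$ factor produce a cone family by part (1), and then finitely many elementary modifications along $\Ical$ produce a fertile one by part (2). The unifying device is Theorem~\ref{Thm1.1}: an elementary modification of $\Ecal$ along the central divisor $B_0=\{z=0\}$ changes $\Ecal$ only near $0$, and I will first check that it stays rank two reflexive, keeps its singular set equal to $\{0\}$, and keeps $\Ecal|_{B_0}\cong\Ocal_{B_0}\oplus\Ical$ with the \emph{same} $\Ical$ — so $k(\Ecal)$ is preserved — while still possibly changing the isomorphism type of $\Ecal$ itself. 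Being a cone, resp.\ a fertile, family is then equivalent, by Theorem~\ref{Thm1.1}, to $\Delta(\widehat{\Ecal}|_{\PBbb^2})=4k(\Ecal)$, resp.\ to that together with $\widehat{\Ecal}|_{\PBbb^2}^{**}\ncong\Ocal_{\PBbb^2}^{\oplus2}$, so the task becomes: drive $\Delta(\widehat{\Ecal}|_{\PBbb^2})$ up to its maximum $4k(\Ecal)$, and then make the bubble non-split.

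\emph{The modifications and transfer to the blow-up.} Writing $\Ecal|_{B_0}\cong\Ocal_{B_0}\oplus\Ical$, the surjections $\Ecal\twoheadrightarrow\Ecal|_{B_0}\twoheadrightarrow\Ocal_{B_0}$ and $\Ecal\twoheadrightarrow\Ecal|_{B_0}\twoheadrightarrow\Ical$, together with their inverse ``upward'' modifications inside $\Ecal(B_0)$, define the two families of elementary modifications. Since $B_0$ is a smooth principal divisor with trivial normal bundle, the standard $\Tor$ computation identifies the new restriction to $B_0$ in each case; combining it with $H^1(B_0,\Ical)=0$ and a computation of $\Ext^1_{\Ocal_{B_0}}(\Ical,\Ocal_{B_0})$ shows that, for the appropriate up/down choice, the restriction is again $\Ocal_{B_0}\oplus\Ical$. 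The key lemma is the transfer to $\widehat B$: using $p^{*}[B_0]=[\widehat{B_0}]+[E]$, I would show that the semistable (optimal) extension $\widehat{\Ecal'}$ of a modified family is obtained from $\widehat{\Ecal}$ by an explicit elementary modification along $E=\PBbb^2$ supported near $\PBbb^1$, so that $\widehat{\Ecal'}|_{\PBbb^2}$ is computed from $\widehat{\Ecal}|_{\PBbb^2}$ and, in particular, $\widehat{\Ecal'}|_{\PBbb^1}$ is an elementary modification of the rank two bundle $\widehat{\Ecal}|_{\PBbb^1}$ on $\PBbb^1$. Here I would compare the optimal extensions before and after using the characterisation from \cite{ChenSun:20b}.

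\emph{Part (1): reaching a cone family.} The $\Ocal_{B_0}$-modifications act on $\widehat{\Ecal}|_{\PBbb^1}$ by elementary modifications that strictly decrease a non-negative integer measuring its distance from $\Ocal_{\PBbb^1}^{\oplus2}$; after finitely many steps $\widehat{\Ecal}|_{\PBbb^1}\cong\Ocal_{\PBbb^1}^{\oplus2}$, i.e.\ the family is non-barren, whence $\Delta(\widehat{\Ecal}|_{\PBbb^2})=4k(\Ecal)$ and, by Theorem~\ref{Thm1.1}, these members are cone families — with a single borderline step at which a fertile family may appear. To pin down the cone I would do an explicit local computation at $0$: from a presentation of $\Ecal$ realising $\Ecal|_{B_0}=\Ocal_{B_0}\oplus\Ical$, pull back to $\widehat B$, take the reflexive hull, and check that the semistable extension restricts on $\PBbb^2$ to $\Ocal_{\PBbb^2}\oplus\underline{\Ical}$, with $\underline{\Ical}$ the ideal sheaf cut out by ``$\Ical$ planted at $[0,0,1]\in\PBbb^2$''; this $\underline{\Ical}$ is unaffected by the $\Ocal_{B_0}$-modifications, and $\mathrm{length}(\Ocal_{\PBbb^2}/\underline{\Ical})=k(\Ecal)$, consistent with $\Delta/4=k(\Ecal)$.

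\emph{Part (2) and the main obstacle.} For a cone family the bubble is the \emph{split} sheaf $\Ocal_{\PBbb^2}\oplus\underline{\Ical}$; I would show that an elementary modification along the $\Ical$ summand keeps $\widehat{\Ecal}|_{\PBbb^1}$ trivial (so the family stays non-barren) while producing a bubble that fits in $0\to\Ocal_{\PBbb^2}\to\widehat{\Ecal}|_{\PBbb^2}\to\underline{\Ical}\to0$, with fertility equivalent to this sequence being non-split, equivalently to $c_2$ of its double dual being positive. The measure of progress is how far the bubble still is from the split model $\Ocal_{\PBbb^2}\oplus\underline{\Ical}$ — for instance the length of the torsion of its restriction to $\PBbb^1$ — which strictly drops at each step, and an induction on $k(\Ecal)$ obtained by peeling off $\Ical$ at $0$ then forces non-splitness after finitely many modifications. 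I expect this to be the crux of the whole argument: the modifications live along $B_0$, but fertility is a condition on the codimension-two locus $\PBbb^1\subset\PBbb^2$ and on the torsion of a restriction to it, and — as stressed in the introduction — reflexive sheaves agreeing off codimension two coincide, leaving essentially no room to manoeuvre; one must therefore show that the $\Ical$-modifications genuinely change the isomorphism type of the bubble rather than merely re-splitting it, which requires a detailed analysis of the optimal extension of \cite{ChenSun:20b} for the model $\Ocal_{B_0}\oplus\Ical$ and careful control of how a restriction to $\PBbb^1$ gains or loses torsion under modifications along $B_0$.
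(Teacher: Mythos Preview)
Your overall architecture is right — show the $\Ocal_{B_0}$–modifications preserve the singularity and eventually reach a cone, then show the $\Ical$–modifications from a cone eventually reach a fertile family — but there are two genuine gaps.

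\medskip
\textbf{Part (2): the termination argument is missing.} Your proposed ``measure of progress'' does not work. Starting from a cone family, the bubble already \emph{is} the split sheaf $\Ocal_{\PBbb^2}\oplus\underline{\Ical}$ and its restriction to $\PBbb^1$ is $\Ocal_{\PBbb^1}^{\oplus 2}$ with no torsion; there is nothing to ``strictly drop''. In fact the paper computes (Lemma~\ref{Lemma3.13}) that after one $\Ical$–modification the new bubble sits in $0\to\Ocal_{\PBbb^2}\to\widehat{\Ecal_{-k-1}}|_{\PBbb^2}\to\underline{\Ical}\to 0$ with the \emph{same} $\underline{\Ical}$, so all local invariants you could read off from $\PBbb^2$ or $\PBbb^1$ are unchanged if the extension happens to split again. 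The paper's key idea, which your proposal lacks, is a \emph{global} proof by contradiction on $B$: write $\Ecal$ via a presentation $0\to\Rcal\to\Ocal_B^{\oplus n}\oplus\Ocal_B\xrightarrow{(\sigma,s)}\Ecal\to 0$ with $s$ extending the $\Ocal_{B_0}$–summand; then the $\Ical$–modification replaces the last $\Ocal_B$ by $(z)$, and inductively $\Ecal_{-k}\cong(\Ocal_B^{\oplus n}\oplus(z^k))/\Rcal$. If every $\Ecal_{-k}$ were a cone, this would hold for all $k$, forcing $\Rcal\subset\Ocal_B^{\oplus n}$ and hence $\Ecal\cong\Ocal_B\oplus\Qcal$, contradicting that $\Ecal$ is genuinely singular at $0$. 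No invariant on the exceptional divisor can see this; the argument lives on $B$.

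\medskip
\textbf{Part (1): the mechanism is different, and one of your claims is false.} The paper does not track $\widehat{\Ecal}|_{\PBbb^1}$ directly. Instead it lifts the $\Ocal_{B_0}$–section to $\widehat{s_k}\in H^0(\widehat{\Ecal_k})$, computes how the discriminant changes under the induced modification on $\widehat B$ (Lemma~\ref{Lemma3.5}, Corollary~\ref{Cor3.6}: $\Delta_{\Ecal_{k+1}}\ge\Delta_{\Ecal_k}$), and concludes it stabilises since $\Delta\le 4k(\Ecal)$. The crucial step is then a \emph{semicontinuity} argument in the $z$–direction (Lemma~\ref{prop3.9}): once $m_k=0$ one has $c_2(\widehat{\Qcal_k}|_{\PBbb^2})\ge\limsup_{z\to 0}\ell(\Qcal_k|_{B_z})=k(\Ecal)$, which forces $\Delta_{\Ecal_k}=4k(\Ecal)$ and hence non-barrenness via Theorem~\ref{Thm1.1}. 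Your ``explicit local computation: pull back a presentation, take the reflexive hull'' does not produce the semistable extension in general (that requires further modifications along $\PBbb^2$), and your assertion that $\underline{\Ical}$ is ``$\Ical$ planted at $[0,0,1]$'' is wrong: the paper gives an explicit family forming $\Ocal\oplus(x^2,y^2,xy)$ whose cone has $\underline{\Ical}$ a local complete intersection of colength $3$, not isomorphic to $\Ical$ (Corollary~\ref{Different}). The ideal $\underline{\Ical}$ depends on the family, not just on $\Ical$.
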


\begin{rmk}
\begin{itemize}
    \item The nontrivial bubble found here belongs to a very special class of instantons in the ADHM construction that arises from the spectral construction associated to an ideal sheaf over $\CBbb^2$ (see \cite[Page 400]{DonaldsonKronheimer:90}).
    
    \item $\underline{\Ical}$ in general has no direct relations with $\Ical$ and could be different in nature (Corollary \ref{Different}). In particular, the cones do not necessarily recover the original singularity except for the multiplicity.

    \item The theorem says if we do elementary modification along the $\Ocal_{B_0}$ factor and then look at the extension through the blow-up, this corresponds to the analytic case that if we rescale slowly, the limit is a cone. For a cone family, if we do elementary modification along the $\Ical$ factor and look at things through the blow-up, this corresponds to rescaling faster compared to the original rescalings for a cone, then at a critical scale, we see a nontrivial bubble with no bubbling at infinity. 

    \item To solve the problem in general, it seems to the author that new modifications and new structures from the formed singularities or families are needed.
\end{itemize}
\end{rmk}

As applications, we then study the formations of a few singularities of low multiplicities. The final section consists of examples which give negative answers to a few plausible questions in general. 

\subsection*{Acknowledgment} The author would like to thank Song Sun for insightful discussions on singularity formation of Yang-Mills connections. This work is partially supported by NSERC and the ECR supplement. 

\section{Bubble}\label{Bubble}
In this section, we study a notion of \emph{bubble} associated to a family of rank two bundles forming a point singularity. Below, we let $\Ecal$ be a rank two reflexive sheaves over $B=\{(x,y,z)\in \CBbb^3: |x|^2+|y|^2+|z|^2<1\}$  where we will fix a spitting $\CBbb^3\cong \CBbb^2 \times \CBbb$ and view $\Ecal$ as a family of rank two bundles which forms a possible singularity at $0\in B_0=B \cap (\CBbb^2 \times \{0\})$. Here $\Ecal|_{B_0}$ is a torsion free sheaf and we define the multiplicity of $\Ecal$ as 
    $$
    k(\Ecal)=\textit{ the dimension of the stalk of } \Ecal|_{B_0}^{**}/\Ecal|_{B_0} \textit{ at } 0.
    $$
We also denote 
    $$
    B_z=B \cap (\CBbb^2 \times \{z\}).
    $$
\begin{rmk}
By taking different splittings $\CBbb^3 \cong \CBbb^2 \times \CBbb$ at $0\in \CBbb^3$, the definition above will in general give different multiplicities and it is actually an upper semi-continuous function of planes passing $0$ in $\CBbb^3$ (See Corollary \ref{Cor2.11}). We will use $k^g(\Ecal)$ to denote the smallest possible value of this function.
\end{rmk}

For our study of bubbles, we need to use the blow-up of $B$ at the origin 
$$
p: \widehat{B} \rightarrow B
$$
and refer the exceptional divisor as 
$$
\PBbb^2:=p^{-1}(0)
$$ 
and $\Ocal_{\widehat{B}}(1)$ as the invertible sheaf associated to $\PBbb^2$.  We also denote the strict transform of $B_0$ in $\widehat{B}$ as $\widehat{B_0}$ and 
$$
\PBbb^1:=\PBbb^2\cap \widehat{B_0}.
$$ 
Then we need to look at all the possible reflexive extensions of $(p^*\Ecal)|_{\widehat{B}\setminus \PBbb^2}$ across the exceptional divisor $\PBbb^2$. 

There exists tons of such extensions due to that the exceptional divisor has codimension one. We first define those good ones corresponding to our study of singularity formation. 
\begin{defi}
    If there exists a reflexive extension $\widehat{\Ecal}$ of a family $\Ecal$ satisfying both
    \begin{enumerate}
        \item    (triviality at infinity) $\widehat{\Ecal}|_{\PBbb^1}\cong\Ocal_{\PBbb^1}^{\oplus 2}$;
        \item (nontrivial bubble) $\widehat{\Ecal}|_{\PBbb^2}^{**}  \ncong \Ocal_{\PBbb^2}^{\oplus 2}.$
    \end{enumerate}
    then $\widehat{\Ecal}|_{\PBbb^2}$ is called a \emph{bubble} for the family $\Ecal$ and $\Ecal$ is called a \emph{fertile} family. If $\widehat{\Ecal}|_{\PBbb^2}^{**}\cong\Ocal_{\PBbb^2} \oplus \Ocal_{\PBbb^2}$ and $\widehat{\Ecal}|_{\PBbb^1}\cong \Ocal_{\PBbb^1} \oplus \Ocal_{\PBbb^1}$, we call it a \emph{cone} family and $\widehat{\Ecal}|_{\PBbb^2}$ the cone of the family. We will also refer cone families and fertile families together as nonbarren families. If the family is neither a cone family nor a fertile family, we call the family a barren family. 
\end{defi}

\begin{prop}[\cite{ChenSun:20b}]\label{BubbleUniqueness}
   There exists a unique extension  $\widehat{\Ecal}$ of a family $\Ecal$ so that 
   $\widehat{\Ecal}|_{\PBbb^2}$ is semistable of degree equal to $0$ or $1$ exclusively. In particular, a bubble for the given family, if it exists, is the unique extension $\widehat{\Ecal}$ with $\widehat{\Ecal}|_{\PBbb^2}$ being semistable. 
\end{prop}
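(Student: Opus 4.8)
The plan is to prove existence and uniqueness separately, in both cases by controlling how reflexive extensions of $(p^*\Ecal)|_{\widehat{B}\setminus\PBbb^2}$ can differ along the single prime divisor $\PBbb^2$. The first thing I would record is that any two such extensions $\widehat{\Ecal}_1,\widehat{\Ecal}_2$ agree over $\widehat{B}\setminus\PBbb^2$, so (a reflexive sheaf being recovered from its restriction to the complement of a codimension-two set) a choice of isomorphism there realizes them as two lattices in a common rank-two module over the discrete valuation ring $\Ocal_{\widehat{B},\eta}$, $\eta$ the generic point of $\PBbb^2$; after replacing $\widehat{\Ecal}_2$ by $\widehat{\Ecal}_2\otimes\Ocal_{\widehat{B}}(k)$ for a suitable $k\in\ZBbb$ we may assume $\widehat{\Ecal}_1\subseteq\widehat{\Ecal}_2$ in minimal position, meaning $\widehat{\Ecal}_1\not\subseteq\widehat{\Ecal}_2\otimes\Ocal_{\widehat{B}}(-1)$. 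Since $\Ocal_{\widehat{B}}(1)|_{\PBbb^2}\cong\Ocal_{\PBbb^2}(-1)$, twisting by $\Ocal_{\widehat{B}}(1)$ preserves (semi)stability of the restriction to $\PBbb^2$ and moves its first Chern class by an even integer; hence ``degree $0$ or $1$ exclusively'' follows at once once we know that there exists a semistable extension and that it is unique up to such twists.

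For existence I would run a Langton-type descent. Start from any reflexive extension, e.g. $\widehat{\Ecal}^{(0)}:=(p^*\Ecal)^{**}$. If $\widehat{\Ecal}^{(i)}|_{\PBbb^2}$ is not semistable it has a maximal destabilizing sub-line-bundle $\Lcal_i$ with torsion-free rank-one quotient $\Qcal_i$, $\deg\Lcal_i>\deg\Qcal_i$, and I set
$$
\widehat{\Ecal}^{(i+1)}:=\ker\!\big(\widehat{\Ecal}^{(i)}\twoheadrightarrow\widehat{\Ecal}^{(i)}|_{\PBbb^2}\twoheadrightarrow\Qcal_i\big).
$$
Restricting the defining sequence to $\PBbb^2$ and using $\Tor_1^{\Ocal_{\widehat{B}}}(\Qcal_i,\Ocal_{\PBbb^2})\cong\Qcal_i\otimes\Ocal_{\PBbb^2}(1)$ identifies $\widehat{\Ecal}^{(i+1)}|_{\PBbb^2}$ as an extension of $\Lcal_i$ by $\Qcal_i\otimes\Ocal_{\PBbb^2}(1)$, and a direct Chern-class computation with this sequence and the corresponding one for $\widehat{\Ecal}^{(i)}|_{\PBbb^2}$ gives
$$
\Delta\!\big(\widehat{\Ecal}^{(i+1)}|_{\PBbb^2}\big)-\Delta\!\big(\widehat{\Ecal}^{(i)}|_{\PBbb^2}\big)=2\big(\deg\Lcal_i-\deg\Qcal_i\big)-1\ \ge\ 1 .
$$
Since the discriminant of a reflexive extension is bounded above (this reflects the finiteness of $k(\Ecal)$; cf. Theorem~\ref{Thm1.1}(2), $\Delta(\widehat{\Ecal}|_{\PBbb^2})\le 4k(\Ecal)$), this integer can increase only finitely often, so the descent terminates in an extension whose restriction to $\PBbb^2$ is semistable; one final twist normalizes its degree to lie in $\{0,1\}$. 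The step I expect to be the main obstacle is the verification that $\widehat{\Ecal}^{(i+1)}$ is again reflexive — this uses that $\Qcal_i$ is pure of codimension one, and it is precisely where the geometry of the exceptional divisor enters; the general form of this construction is what \cite{ChenSun:20b} carries out under the name of optimal extensions, which one could also invoke directly.

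For uniqueness, suppose $\widehat{\Ecal}_1$ and $\widehat{\Ecal}_2$ both have semistable restriction, are in minimal position as above, and $\widehat{\Ecal}_1\subsetneq\widehat{\Ecal}_2$. Then $\Qcal:=\widehat{\Ecal}_2/\widehat{\Ecal}_1$ is a nonzero sheaf on $\PBbb^2$ which is not supported in codimension $\ge 2$ (otherwise $\widehat{\Ecal}_1=\widehat{\Ecal}_2$ by reflexivity), and minimality forces $\widehat{\Ecal}_1$ to have nonzero image in $\widehat{\Ecal}_2|_{\PBbb^2}$, so $\Qcal$ is a rank-one torsion-free sheaf on $\PBbb^2$ with $c_1(\widehat{\Ecal}_1|_{\PBbb^2})=c_1(\widehat{\Ecal}_2|_{\PBbb^2})+1$. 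Restricting $0\to\widehat{\Ecal}_1\to\widehat{\Ecal}_2\to\Qcal\to 0$ to $\PBbb^2$ exactly as above exhibits $\Qcal\otimes\Ocal_{\PBbb^2}(1)$ as a subsheaf of $\widehat{\Ecal}_1|_{\PBbb^2}$ and $\Qcal$ as a quotient of $\widehat{\Ecal}_2|_{\PBbb^2}$, whence semistability of the two restrictions gives
$$
\deg\Qcal+1\ \le\ \tfrac12 c_1(\widehat{\Ecal}_1|_{\PBbb^2})\ =\ \tfrac12\big(c_1(\widehat{\Ecal}_2|_{\PBbb^2})+1\big) ,\qquad \deg\Qcal\ \ge\ \tfrac12 c_1(\widehat{\Ecal}_2|_{\PBbb^2}) ,
$$
which are incompatible. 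Hence $\widehat{\Ecal}_1=\widehat{\Ecal}_2$ up to a twist by some $\Ocal_{\widehat{B}}(k)$, and since such twists move the degree by an even integer, exactly one of them has $c_1(\widehat{\Ecal}|_{\PBbb^2})\in\{0,1\}$; this is the asserted unique extension.

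Finally, if a family admits a bubble $\widehat{\Ecal}|_{\PBbb^2}$, then $\widehat{\Ecal}|_{\PBbb^1}\cong\Ocal_{\PBbb^1}^{\oplus 2}$ forces $c_1(\widehat{\Ecal}|_{\PBbb^2})=0$ and rules out any sub-line-bundle $\Ocal_{\PBbb^2}(a)$ with $a>0$, since such a subsheaf would restrict to an injection $\Ocal_{\PBbb^1}(a)\hookrightarrow\Ocal_{\PBbb^1}^{\oplus 2}$, impossible for $a>0$; thus $\widehat{\Ecal}|_{\PBbb^2}$ is semistable of degree $0$, and by the uniqueness just proved it is the distinguished extension of the first part. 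The same argument applies verbatim to cone families.
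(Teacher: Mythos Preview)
The paper's own proof simply invokes \cite[Theorem~1.4(II)]{ChenSun:20b} as a black box: in rank two the optimal extension constructed there is automatically semistable, and uniqueness up to twist by $\Ocal_{\widehat B}(k)$ follows, so normalizing $c_1\in\{0,1\}$ finishes. Your approach is more hands-on, spelling out the Langton descent and a direct uniqueness argument; that is a reasonable route, but there are two problems. First, a circularity: for termination of the descent you invoke Theorem~\ref{Thm1.1}(2), i.e.\ $\Delta(\widehat{\Ecal}|_{\PBbb^2})\le 4k(\Ecal)$. In the paper this is Proposition~\ref{Cor2.11}, whose proof already uses the normalized semistable extension and Proposition~\ref{Prop2.6}, the latter itself appealing to \cite{ChenSun:20b} for termination. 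So your self-contained existence loops back on itself; you must either cite \cite{ChenSun:20b} directly (as you note parenthetically) or supply an independent upper bound.

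Second, a genuine gap in uniqueness. From $\widehat{\Ecal}_1\subsetneq\widehat{\Ecal}_2$ in minimal position you conclude that $\Qcal=\widehat{\Ecal}_2/\widehat{\Ecal}_1$ is ``a rank-one torsion-free sheaf on $\PBbb^2$'' and then identify $\Tor_1(\Qcal,\Ocal_{\PBbb^2})\cong\Qcal\otimes\Ocal_{\PBbb^2}(1)$. But minimality only says the image of $\widehat{\Ecal}_1$ in $\widehat{\Ecal}_2|_{\PBbb^2}$ is nonzero; over $\Ocal_{\widehat B,\eta}$ the elementary divisors are $(1,t^b)$ with $b\ge 1$, and for $b\ge 2$ the quotient $\Qcal$ is supported on the thickening $b\PBbb^2$, so your $\Tor$ identification and the formula $c_1(\widehat{\Ecal}_1|_{\PBbb^2})=c_1(\widehat{\Ecal}_2|_{\PBbb^2})+1$ both fail. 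The fix is to argue with the induced map $\widehat{\Ecal}_1|_{\PBbb^2}\to\widehat{\Ecal}_2|_{\PBbb^2}$ of torsion-free sheaves: its image $\Scal$ has rank one, is a quotient of $\widehat{\Ecal}_1|_{\PBbb^2}$, and its saturation $\bar\Scal$ is a sub-line-bundle of $\widehat{\Ecal}_2|_{\PBbb^2}$; semistability of both restrictions gives $\tfrac12 c_1(\widehat{\Ecal}_1|_{\PBbb^2})\le\deg\Scal\le\deg\bar\Scal\le\tfrac12 c_1(\widehat{\Ecal}_2|_{\PBbb^2})$, contradicting $c_1(\widehat{\Ecal}_1|_{\PBbb^2})=c_1(\widehat{\Ecal}_2|_{\PBbb^2})+b$. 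Your final paragraph, deducing that a bubble is automatically semistable of degree zero from triviality on $\PBbb^1$, is correct and in fact more explicit than the paper's treatment.
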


\begin{proof}
This follows from \cite[Theorem 1.4 (II)]{ChenSun:20b} where since the sheaves considered have rank two, semistablity can be always achieved, thus it is unique since we can normalize the determinant to be trivial or $1$ here.
\end{proof}

Below we will call the reflexive extension $\widehat{\Ecal}$ of $\Ecal$ with $\widehat{\Ecal}|_{\PBbb^2}$ being semistable and $c_1\in \{0,-1\}$ the \emph{normalized} semistable extension of $\Ecal$.

Given this, the following is well-defined
\begin{defi}
    Given any family $\Ecal$, the discriminant of $\Ecal$ is defined as 
    $$
    \Delta_{\Ecal}:=\Delta(\widehat{\Ecal}|_{\PBbb^2})
    $$
    where $\widehat{\Ecal}$ is any semistable extension of $\Ecal$. 
\end{defi}

\begin{prop}\label{BarrenFamily}
    There exist barren families. 
\end{prop}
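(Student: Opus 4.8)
The plan is to exhibit one explicit barren family. Let $x,y,z$ be the coordinates on $\CBbb^3$ and set
$$
\Ecal \;:=\; \ker\!\bigl(\Ocal_B^{\oplus 3}\xrightarrow{\,(x,y,z)\,}\Ocal_B\bigr),
$$
so that $0\to\Ecal\to\Ocal_B^{\oplus 3}\to\mfrak_0\to 0$, where $\mfrak_0\subset\Ocal_B$ is the ideal of the origin. First I would check that $\Ecal$ is a bona fide family with a single point singularity: it is torsion free as the kernel of a map of bundles; a depth count using this sequence together with $0\to\mfrak_0\to\Ocal_B\to\Ocal_0\to 0$ gives $\operatorname{depth}(\Ecal_0)\ge 2$, so $\Ecal$ is reflexive; and since $\mfrak_0$ has projective dimension two at $0$, the sheaf $\Ecal$ fails to be locally free exactly there. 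Restricting the two sequences to $B_0=B\cap\{z=0\}$ (here $\Tor_1^{\Ocal_B}(\mfrak_0,\Ocal_{B_0})=0$, since $z$ is a nonzerodivisor on $\mfrak_0$) and chasing through the torsion of $\mfrak_0\otimes\Ocal_{B_0}$, one identifies $\Ecal|_{B_0}\cong\Ocal_{B_0}\oplus(x,y)$; in particular $\Ecal$ forms the singularity $\Ocal_{B_0}\oplus\Ical$ with $\Ical=(x,y)$ the ideal of $0\in B_0$, and $k(\Ecal)=1$.

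Next I would pin down the semistable extension. On the blow-up $p\colon\widehat B\to B$ the pulled-back functions $x,y,z$ generate the ideal sheaf $\Ocal_{\widehat B}(-1)$ of $\PBbb^2$, so
$$
\Scal\;:=\;\ker\!\bigl(\Ocal_{\widehat B}^{\oplus 3}\xrightarrow{\,(x,y,z)\,}\Ocal_{\widehat B}(-1)\bigr)
$$
is locally free of rank two and agrees with $p^*\Ecal$ over $\widehat B\setminus\PBbb^2$, hence is a reflexive extension of $\Ecal$. Tensoring its defining sequence with $\Ocal_{\PBbb^2}$ (no $\Tor$ appears, as $\Ocal_{\widehat B}(-1)$ is invertible) and using $\Ocal_{\widehat B}(-1)|_{\PBbb^2}\cong\Ocal_{\PBbb^2}(1)$ with $x,y,z$ restricting to the coordinate sections of $\Ocal_{\PBbb^2}(1)$, I recognize the (dual, twisted) Euler sequence and obtain $\Scal|_{\PBbb^2}\cong\Omega^1_{\PBbb^2}(1)$. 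This bundle is stable with $c_1=-1$, so by Proposition \ref{BubbleUniqueness} $\Scal$ is the normalized semistable extension of $\Ecal$ and
$$
\Delta_{\Ecal}=\Delta\!\bigl(\Omega^1_{\PBbb^2}(1)\bigr)=4c_2-c_1^2=4\cdot 1-1=3 .
$$

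To conclude, combine this with Theorem \ref{Thm1.1}: part (1) says every reflexive extension $\widehat\Ecal$ of $\Ecal$ satisfies $\Delta(\widehat\Ecal|_{\PBbb^2})\le\Delta_{\Ecal}=3$, while part (3) says that $\widehat\Ecal|_{\PBbb^1}\cong\Ocal_{\PBbb^1}^{\oplus 2}$ would force $\Delta(\widehat\Ecal|_{\PBbb^2})=4k(\Ecal)=4$. Since $3<4$, no reflexive extension of $\Ecal$ is trivial at infinity, so $\Ecal$ is neither a cone family nor a fertile family, i.e.\ it is barren. Concretely, $\Scal|_{\PBbb^1}\cong\Omega^1_{\PBbb^2}(1)|_{\PBbb^1}\cong\Ocal_{\PBbb^1}\oplus\Ocal_{\PBbb^1}(-1)\ncong\Ocal_{\PBbb^1}^{\oplus2}$, so the optimal extension of this family really does lose positivity at infinity.

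The one step requiring genuine care is the restriction computation $\Ecal|_{B_0}\cong\Ocal_{B_0}\oplus(x,y)$ (equivalently, the value $k(\Ecal)=1$): one must check that restricting introduces no stray torsion and then identify the resulting torsion free sheaf explicitly. The reflexivity of $\Ecal$, the Euler-sequence identification of $\Scal|_{\PBbb^2}$, and the final numerology are routine. In fact the precise value of $k(\Ecal)$ is not even needed: $\Delta_{\Ecal}=3$ already forces $k(\Ecal)\ge 1$ via Theorem \ref{Thm1.1}(2), and since $3$ is not divisible by $4$ one gets $\Delta_{\Ecal}<4k(\Ecal)$, which by Theorem \ref{Thm1.1}(3) is exactly the statement that no reflexive extension is trivial at infinity.
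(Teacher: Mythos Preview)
Your proof is correct and uses the same example as the paper: your kernel sheaf is dual to the paper's cokernel $0\to\Ocal\xrightarrow{(x,y,z)^T}\Ocal^{\oplus 3}\to\Ecal\to 0$, and since both are rank-two reflexive with trivial determinant on $B$ they are isomorphic. The paper argues more directly via Proposition~\ref{BubbleUniqueness}---the semistable restriction is $\Tcal_{\PBbb^2}$ (your $\Omega^1_{\PBbb^2}(1)$ after a twist), which has odd $c_1$, so no degree-zero semistable extension exists and hence no extension is trivial on $\PBbb^1$---whereas you route the same parity obstruction through Theorem~\ref{Thm1.1} and $\Delta_\Ecal=3\neq 4k(\Ecal)$; this is logically fine, but note that in the paper's ordering Theorem~\ref{Thm1.1} is established only after this proposition (a forward reference, not a circularity).
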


\begin{proof}
    For this, consider the family $\Ecal$ given by
    $$
    0\rightarrow \Ocal \xrightarrow{\begin{pmatrix}
        x\\
        y\\
        z\end{pmatrix}} \Ocal^{\oplus 3} \rightarrow \Ecal \rightarrow 0.
    $$
    A natural extension for the family can be given by 
    $$
    0 \rightarrow \Ocal \xrightarrow{\begin{pmatrix}
        X\\
        Y\\
        Z
        \end{pmatrix}} \Ocal_{\widehat{B}}(-1) \oplus \Ocal_{\widehat{B}}(-1) \oplus \Ocal_{\widehat{B}}(-1) \rightarrow \widehat{\Ecal} \rightarrow 0.
    $$
    Then $\widehat{\Ecal}|_{\PBbb^2}\cong \Tcal_{\PBbb^2}$ and it is stable of odd degree. By Proposition \ref{BubbleUniqueness}, this family has to be barren since the bubble always has zero degree if it exists.
\end{proof}

It turns out that the semistable extensions can also be characterized using only its discriminant.
\begin{prop}\label{Prop2.6}
$\widehat{\Ecal}|_{\PBbb^2}$ has the largest discriminant among all the extensions of $\Ecal$ if and only if $\widehat{\Ecal}|_{\PBbb^2}$ is semistable.
\end{prop}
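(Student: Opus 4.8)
The plan is to use the elementary modification picture for extensions of $\Ecal$ across $\PBbb^2$ together with the discriminant inequality recorded in Theorem \ref{Thm1.1}(2). First I would fix a reference extension, say the normalized semistable extension $\widehat{\Ecal}_0$ furnished by Proposition \ref{BubbleUniqueness}, and recall that any two reflexive extensions of $(p^*\Ecal)|_{\widehat{B}\setminus\PBbb^2}$ differ by a sequence of elementary modifications along $\PBbb^2$ — that is, each such extension $\widehat{\Ecal}$ restricts over $\PBbb^2$ to a rank two torsion-free sheaf $\underline{\Fcal}=\widehat{\Ecal}|_{\PBbb^2}$ obtained from $\widehat{\Ecal}_0|_{\PBbb^2}$ by finitely many Hecke-type modifications (take kernel of a surjection to a sheaf supported on a line, or push out along a sub-line-bundle of the restriction to a line), possibly after twisting by $\Ocal_{\widehat{B}}(m)$. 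The key numerical input is that a single elementary modification of a rank two torsion-free sheaf on $\PBbb^2$ changes $c_2$ by $\pm 1$ and fixes $c_1$ up to the twist, so $\Delta$ changes by integers in a controlled way, and Theorem \ref{Thm1.1}(2)–(3) show $\Delta(\widehat{\Ecal}|_{\PBbb^2})/4\le k(\Ecal)$ with equality detecting triviality at $\PBbb^1$; in particular the discriminant is bounded above, so a maximizer exists.

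Next I would prove the two implications. For the direction ``semistable $\Rightarrow$ largest discriminant'': since by Proposition \ref{BubbleUniqueness} the semistable extension is unique up to the normalization of $c_1$, it suffices to show that if $\widehat{\Ecal}|_{\PBbb^2}$ is \emph{not} semistable then its discriminant is not maximal. In that case $\widehat{\Ecal}|_{\PBbb^2}$ has a destabilizing sub-line-bundle $L$ with $\deg L>\tfrac12\deg\widehat{\Ecal}|_{\PBbb^2}$; I would perform the elementary modification of $\widehat{\Ecal}$ along $\PBbb^2$ corresponding to the quotient $\widehat{\Ecal}|_{\PBbb^2}\to \widehat{\Ecal}|_{\PBbb^2}/L$. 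A standard computation (compare, e.g., the modification formulas in \cite{ChenSun:20b}) shows that the new restriction $\underline{\Fcal}'$ has the same or normalized $c_1$ but strictly larger $c_2$, hence strictly larger $\Delta$; this is exactly the ``rescaling to capture more multiplicity'' step. Thus a non-semistable restriction can always be strictly improved, so it cannot be a maximizer. For the converse ``largest discriminant $\Rightarrow$ semistable'': a maximizer exists by the boundedness above, and the maximizer cannot be non-semistable by the argument just given, so the maximizer is semistable; since the semistable extension is unique (after normalizing $c_1\in\{0,-1\}$), the maximizer is exactly the normalized semistable extension. One should be slightly careful that the $c_1$-normalization does not affect $\Delta$ (it doesn't, since $\Delta$ is insensitive to twisting by a line bundle pulled back from $\PBbb^2$, or one simply works within a fixed $c_1$-parity class as in Proposition \ref{BubbleUniqueness}).

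The main obstacle I expect is the first implication, specifically showing that an elementary modification toward the quotient by a destabilizing sub-line-bundle \emph{strictly increases} $\Delta$ rather than merely not decreasing it, and that one never gets stuck at a non-semistable local maximum. The clean way to handle this is to relate $\Delta$ to $k(\Ecal)$ via Theorem \ref{Thm1.1}(2): one shows that for a non-semistable $\widehat{\Ecal}|_{\PBbb^2}$ the restriction to $\PBbb^1$ is forced to be of the form $\Ocal_{\PBbb^1}(a)\oplus\Ocal_{\PBbb^1}(-a)$ with $a\neq 0$ (or carries torsion in the degenerate cases), so by Theorem \ref{Thm1.1}(3) one has strict inequality $\Delta(\widehat{\Ecal}|_{\PBbb^2})/4 < k(\Ecal)$; combined with the fact that elementary modifications move $\Delta/4$ through all integer values up to $k(\Ecal)$, this pins the maximizer to the unique semistable extension and closes both directions simultaneously. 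I would also need to check the edge case where $\widehat{\Ecal}|_{\PBbb^2}$ is not locally free — then one first saturates/reflexivizes and reduces to the locally free (equivalently, to the double-dual) computation, noting that taking double duals only decreases $c_2$ and hence only increases $\Delta$, which is consistent with the claim.
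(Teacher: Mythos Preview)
Your core strategy is exactly the paper's: if $\widehat{\Ecal}|_{\PBbb^2}$ is unstable with maximal destabilizing subsheaf $\underline{\Lcal}$ and quotient $\underline{\Qcal}$, the elementary modification $\widehat{\Ecal}'$ along $\underline{\Qcal}$ has restriction fitting into $0\to\underline{\Qcal}(1)\to\widehat{\Ecal}'|_{\PBbb^2}\to\underline{\Lcal}\to 0$, whence $\Delta(\widehat{\Ecal}'|_{\PBbb^2}) = \Delta(\widehat{\Ecal}|_{\PBbb^2}) + 2(\deg\underline{\Lcal} - \deg\underline{\Qcal}) - 1 \geq \Delta(\widehat{\Ecal}|_{\PBbb^2}) + 1$. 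Iterating must terminate at the unique (up to twist) semistable extension, which therefore maximizes $\Delta$. The paper carries out this computation explicitly and cites \cite{ChenSun:20b} for the termination and uniqueness.

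Your appeal to Theorem \ref{Thm1.1}(2)--(3) for the a priori upper bound on $\Delta$, however, is circular in the paper's logical order. The ``for any extension'' clause of Theorem \ref{Thm1.1}(2) is the ``in particular'' of Proposition \ref{Cor2.11}, and that step goes through Proposition \ref{Prop2.6} itself: one first bounds $\Delta$ for the \emph{semistable} extension via Grauert--M\"ulich and Corollary \ref{computation}, and then transfers the bound to arbitrary extensions using precisely the maximality you are trying to prove. Likewise the $(2)\Rightarrow(1)$ direction of Proposition \ref{BubbleCharaterization} (which is Theorem \ref{Thm1.1}(3)) begins by concluding that an extension with $\Delta = 4k(\Ecal)$ is semistable, again via Proposition \ref{Prop2.6}; so your ``clean way'' alternative is circular as well, quite apart from the unjustified claim that a non-semistable restriction to $\PBbb^2$ forces a nontrivial splitting on $\PBbb^1$. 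The easy fix is to drop the appeal to Theorem \ref{Thm1.1} and cite \cite{ChenSun:20b} directly for termination of the modification process, as the paper does. One further slip in your edge-case remark: passing to the double dual decreases $c_2$ and hence \emph{decreases} $\Delta = 4c_2 - c_1^2$, not increases it.
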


\begin{proof}
This follows from the main result \cite{ChenSun:20b} together with the observation that the elementary modification also decreases the discriminant of the restriction of the extensions on the exceptional divisor. Suppose $\widehat{\Ecal}$ is an extension with $\widehat{\Ecal}|_{\PBbb^2}$ being unstable. Let $\underline{\Lcal}$ be the maximal destabilizing subsheaf of $\widehat{\Ecal}|_{\PBbb^2}$ which has rank one. Denote $\underline{\Qcal}=\widehat{\Ecal}|_{\PBbb^2}/\underline{\Lcal}$. The elementary modification of $\widehat{\Ecal}'$ along $\underline{\Lcal}$ is then given by 
$$
0\rightarrow \widehat{\Ecal}' \rightarrow \widehat{\Ecal} \rightarrow \iota_* \underline{\Qcal} \rightarrow 0
$$
and $\widehat{\Ecal}'|_{\PBbb^2}$ satisfies
$$
0\rightarrow \underline{\Qcal}(1) \rightarrow \widehat{\Ecal}'|_{\PBbb^2} \rightarrow \underline{\Lcal} \rightarrow 0.
$$
From this, we know 
$$
\begin{aligned}
\Delta(\widehat{\Ecal}'|_{\PBbb^2})&=\Delta(\widehat{\Ecal}|_{\PBbb^2})+2(\deg(\underline{\Lcal})-\deg(\underline{\Qcal}))-1\\
&\geq \Delta(\widehat{\Ecal}|_{\PBbb^2})+1.
\end{aligned}
$$
Then by \cite{ChenSun:20b}, if we continue in this way, it will stop at an extension with $\widehat{\Ecal}|_{\PBbb^2}$ being semistable which is unique up to tensoring with powers of $\Ocal_{\widehat{B}}(1)$. The conclusion follows. 
\end{proof}

This gives (1) of Theorem \ref{Thm1.1}.
To continue, we need the following two facts. 
\begin{lem}\label{PushImgageStabilizes}
    Given any extension $\widehat{\Ecal}$ of $\Ecal$, for $k$ large, 
    $$
    p_*(\widehat{\Ecal}(k))=p_*(\widehat{\Ecal}(k+1))=\cdots =\Ecal.
    $$
\end{lem}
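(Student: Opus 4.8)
The plan is to work locally near a point of $\PBbb^2$ and identify $p_*(\widehat{\Ecal}(k))$ with a subsheaf of $(p_*p^*\Ecal)^{**}=\Ecal$, then show that for $k$ large enough this subsheaf is everything. First I would record the two structural facts we use: away from $0\in B$ the map $p$ is an isomorphism, so $p_*(\widehat{\Ecal}(k))$ agrees with $\Ecal$ outside $0$; and both $p_*(\widehat{\Ecal}(k))$ and $\Ecal$ are reflexive on $B$ (the pushforward of a reflexive sheaf along a proper birational morphism from a normal variety is reflexive, or one checks $S_2$ directly since $\widehat{B}$ is smooth and the fibers are positive-dimensional over the only bad point). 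A reflexive rank-two sheaf on the smooth threefold $B$ that agrees with $\Ecal$ away from the single point $0$ and embeds into $\Ecal$ must either equal $\Ecal$ or differ from it by a sheaf supported at $0$; but a subsheaf of a reflexive (hence torsion-free) sheaf cannot have a cokernel correction that changes it only at a point while remaining reflexive — more precisely, two reflexive sheaves on a smooth variety that are isomorphic in codimension $\le 2$ are isomorphic. So it suffices to produce, for $k\gg 0$, an inclusion $p_*(\widehat{\Ecal}(k))\hookrightarrow \Ecal$ that is an isomorphism off $0$; reflexivity then forces equality, and the maps $p_*(\widehat{\Ecal}(k))\to p_*(\widehat{\Ecal}(k+1))$ are the natural ones coming from a section of $\Ocal_{\widehat B}(1)$, compatible with these identifications.

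Next I would construct the inclusion and the stabilization. Since $\Ocal_{\widehat B}(1)$ is the ideal sheaf of $\PBbb^2$ (the exceptional divisor), multiplication by the canonical section $s\in H^0(\widehat B,\Ocal_{\widehat B}(-1))$ — wait, conventions: here $\Ocal_{\widehat B}(1)$ is associated to $\PBbb^2$ as the divisor, so $\PBbb^2$ is cut out by a section of $\Ocal_{\widehat B}(1)$ only after twisting; I would instead use that tensoring by $\Ocal_{\widehat B}(-1)=\Ocal_{\widehat B}(-\PBbb^2)$ gives the inclusion $\widehat{\Ecal}(k-1)=\widehat{\Ecal}(k)\otimes\Ocal(-\PBbb^2)\hookrightarrow \widehat{\Ecal}(k)$, whence $p_*(\widehat{\Ecal}(k-1))\hookrightarrow p_*(\widehat{\Ecal}(k))$, an increasing chain of subsheaves of the reflexive sheaf $\Ecal':=\lim_k p_*(\widehat{\Ecal}(k))$. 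Each term agrees with $\Ecal$ away from $0$, so this is an increasing chain of subsheaves of a coherent sheaf all equal outside a point; such a chain stabilizes (the successive quotients are supported at $0$ and by a standard Artin–Rees / coherence argument the colimit is coherent and reached in finitely many steps). Finally I identify the stable value with $\Ecal$ itself: $\widehat{\Ecal}$ restricts to $p^*\Ecal$ on $\widehat B\setminus\PBbb^2$, so $p_*(\widehat{\Ecal}(k))$ and $\Ecal$ are canonically isomorphic in codimension two; both being reflexive on the smooth $B$, they are isomorphic, and chasing the canonical maps shows the isomorphism is compatible with the twisting maps, so the stable value is exactly $\Ecal$.

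The main obstacle, and the step I would spend the most care on, is the coherence/stabilization of the colimit $\varinjlim_k p_*(\widehat{\Ecal}(k))$ and the identification of this colimit with $\Ecal$ rather than merely with some reflexive sheaf sandwiched between $\Ecal(-D)$'s. Concretely: a priori the increasing union of the $p_*(\widehat{\Ecal}(k))$ could fail to be coherent, or could be strictly larger than $\Ecal$ (if $\widehat{\Ecal}$ had "too many" sections near $\PBbb^2$). The resolution is that $(p^*\Ecal)^{**}$'s pushforward is exactly $\Ecal$ because $\Ecal$ is reflexive and $p$ has connected fibers with $Rp_*\Ocal_{\widehat B}=\Ocal_B$; combined with the fact that $\widehat{\Ecal}$ and $p^*\Ecal$ differ only along $\PBbb^2$, a local cohomology / depth argument bounds how much the twist $\Ocal_{\widehat B}(k)$ can contribute, forcing the colimit to be $\Ecal$ and the chain to stabilize. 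I would phrase this via a local computation on the affine chart of $\widehat B$ where $\PBbb^2$ is a coordinate hyperplane, reducing to the one-variable statement that $\bigcup_k t^{-k}M\cap N = N$ for suitable finitely generated modules $M\subset N$ over a Noetherian ring, with $t$ the local equation of the exceptional divisor.
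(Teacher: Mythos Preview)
Your proposal contains a genuine error that undermines the first approach you outline: the claim that $p_*(\widehat{\Ecal}(k))$ is reflexive on $B$ is false in general. A clean counterexample already in this setting is $\widehat{\Ecal}=\Ocal_{\widehat{B}}(-1)^{\oplus 2}$, which is a reflexive (indeed locally free) extension of $\Ecal=\Ocal_B^{\oplus 2}$; then $p_*\widehat{\Ecal}=\mathfrak{m}^{\oplus 2}$ where $\mathfrak{m}$ is the maximal ideal of $0\in B$, and $\mathfrak{m}$ has depth $1$ at the origin (from the Koszul resolution of the residue field, or from the exact sequence $0\to\mathfrak{m}\to\Ocal_B\to\CBbb\to 0$ in local cohomology), so it is not $S_2$ and not reflexive. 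Neither of your parenthetical justifications holds: pushforward along a proper birational map does \emph{not} preserve reflexivity when the exceptional locus is a divisor, and the vague ``fibers are positive-dimensional'' remark does not produce the needed depth. Notice also that if your reflexivity claim were correct it would prove $p_*(\widehat{\Ecal}(k))=\Ecal$ for \emph{every} $k$, not just large $k$; the appearance of the qualifier ``for $k\gg 0$'' in the statement should already signal that something in your argument is proving too much.

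The correct skeleton is present in your second and third paragraphs, and this is essentially what the paper does, but more directly. One first observes that every $p_*(\widehat{\Ecal}(k))$ sits inside $\Ecal$: a section of $\widehat{\Ecal}(k)$ over $p^{-1}(U)$ restricts to a section of $\Ecal$ over $U\setminus\{0\}$, and reflexivity of $\Ecal$ (not of the pushforward!) extends it across the point. This gives an increasing chain of coherent subsheaves of $\Ecal$, which stabilizes by the Noetherian property of the stalk $\Ecal_0$; no Artin--Rees or colimit subtleties are needed once you have the ambient sheaf $\Ecal$. For the reverse inclusion, the paper extends the identification $(p^*\Ecal)^{**}|_{\widehat{B}\setminus\PBbb^2}\cong\widehat{\Ecal}|_{\widehat{B}\setminus\PBbb^2}$ to a map $(p^*\Ecal)^{**}\to\widehat{\Ecal}(m)$ for some $m$ (clear denominators along $\PBbb^2$), and pushes forward to get $\Ecal=p_*(p^*\Ecal)^{**}\subset p_*(\widehat{\Ecal}(m))$. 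Your local $\bigcup_k t^{-k}M\cap N=N$ picture is morally the same, but the sheaf-theoretic version above is both shorter and avoids the affine-chart bookkeeping.
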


\begin{proof}
It follows from the definition that 
$$
p_*(\widehat{\Ecal}(1)) \subset p_*(\widehat{\Ecal}(2))\subset \cdots \Ecal.
$$ 
By using the Noetherian property of the stalk of $\Ecal$ at $0$, we know 
$$
p_*(\widehat{\Ecal}(k))=p_*(\widehat{\Ecal}(k+1))=\cdots \subset \Ecal
$$
for $k$ larger. It remains to show that 
$$
p_*(\widehat{\Ecal}(k))\cong \Ecal.
$$
For this, it follows from the canonical map $\Ecal \rightarrow p_*p^*\Ecal$ that 
$$
\Ecal\subset  p_*(p^*\Ecal)^{**}
$$
thus $\Ecal=p_*(p^*\Ecal)^{**}$. For any $\widehat{\Ecal}$, the natural isomorphism over $\widehat{B} \setminus \PBbb^2$ can be extended as 
$$
(p^*\Ecal)^{**}\rightarrow  \widehat{\Ecal}(m) 
$$
for some $m$, which implies 
$$
\Ecal=p_*((p^*\Ecal)^{**}) \subset p_*(\widehat{\Ecal}(m)).
$$ 
The conclusion follows.
\end{proof}

\begin{lem}\label{Lem2.7}
    Given any locally free sheaf $\Fcal$ over $\widehat{B_0}$, $R^1p_*(\Fcal(-k))=0$ for $k$ large. In particular, if $\Fcal|_{\PBbb^1}\cong \Ocal_{\PBbb^1}^{\oplus r}$, then $R^1p_*\Fcal=0$.
\end{lem}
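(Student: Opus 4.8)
The plan is to reduce the statement to relative Serre vanishing (equivalently, the theorem on formal functions) for the blow-up of a point on a smooth surface. First I would record the relevant geometry: the restriction $p\colon \widehat{B_0}\to B_0$ is exactly the blow-up of the smooth surface $B_0$ at $0$, with exceptional $(-1)$-curve $\PBbb^1$; since $B_0$ is smooth it meets the exceptional $\PBbb^2$ transversally, so $\PBbb^2\cap\widehat{B_0}=\PBbb^1$ as Cartier divisors on $\widehat{B_0}$ and therefore $\Ocal_{\widehat B}(1)|_{\widehat{B_0}}\cong\Ocal_{\widehat{B_0}}(\PBbb^1)$. Consequently $\Fcal(-k)=\Fcal\otimes\Ocal_{\widehat{B_0}}(-k\PBbb^1)$, with $\Ocal_{\widehat{B_0}}(-k\PBbb^1)|_{\PBbb^1}\cong\Ocal_{\PBbb^1}(k)$, and the line bundle $\Ocal_{\widehat{B_0}}(-\PBbb^1)$ is $p$-ample because it is positive on the only positive-dimensional fibre of $p$.

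For the first assertion I would note that $R^1p_*$ of a coherent sheaf on $\widehat{B_0}$ is supported at $0$ (as $p$ is an isomorphism elsewhere), so the claim is local at $0$ and follows from relative Serre vanishing for the $p$-ample bundle $\Ocal_{\widehat{B_0}}(-\PBbb^1)$. To make this self-contained I would instead use formal functions: letting $Z_m\subset\widehat{B_0}$ be the $m$-th infinitesimal neighborhood of $\PBbb^1$, i.e. the subscheme with ideal sheaf $\Ocal_{\widehat{B_0}}(-(m+1)\PBbb^1)$, the sheaf $\Fcal(-k)|_{Z_m}$ has a filtration with graded pieces $\Fcal|_{\PBbb^1}\otimes\Ocal_{\PBbb^1}(k+j)$, $0\le j\le m$; writing $\Fcal|_{\PBbb^1}\cong\bigoplus_i\Ocal_{\PBbb^1}(a_i)$, each piece has vanishing $H^1$ once $k\ge -1-\min_i a_i$, uniformly in $j$ and $m$, so $H^1(Z_m,\Fcal(-k)|_{Z_m})=0$ for all $m$; since $R^1p_*\Fcal(-k)$ is a coherent sheaf supported at a point, vanishing of its completion at $0$ (computed by formal functions as $\varprojlim_m H^1(Z_m,-)$) forces $R^1p_*\Fcal(-k)=0$.

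For the final (``in particular'') clause, assume $\Fcal|_{\PBbb^1}\cong\Ocal_{\PBbb^1}^{\oplus r}$. The simplest route is the formal-functions computation above with all $a_i=0$ and $k=0$: the graded pieces of $\Fcal|_{Z_m}$ are $\Ocal_{\PBbb^1}(j)^{\oplus r}$ for $0\le j\le m$, all with no $H^1$, so $H^1(Z_m,\Fcal|_{Z_m})=0$ for every $m$ and hence $R^1p_*\Fcal=0$. Alternatively one can deduce it from the first part by descending induction: tensoring $0\to\Ocal_{\widehat{B_0}}(-\PBbb^1)\to\Ocal_{\widehat{B_0}}\to\Ocal_{\PBbb^1}\to0$ with the locally free $\Fcal(-j\PBbb^1)$, pushing forward, and using $R^{\ge2}p_*=0$ together with $R^1p_*(\Ocal_{\PBbb^1}(j)^{\oplus r})=H^1(\PBbb^1,\Ocal_{\PBbb^1}(j))^{\oplus r}=0$ for $j\ge0$, one gets surjections $R^1p_*\Fcal(-(j+1)\PBbb^1)\twoheadrightarrow R^1p_*\Fcal(-j\PBbb^1)$ for all $j\ge0$, so vanishing at some large $k_0$ (first part) descends to $j=0$.

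The step I would be most careful about is the passage to the local/formal picture in the complex-analytic setting: since $B$ is an open ball rather than a scheme, I would either invoke the analytic forms of relative Serre vanishing and the formal function theorem, or simply observe that $R^1p_*(\Fcal(-k))$ is coherent and supported at the single point $0$, so that its vanishing can be tested after $\mathfrak m_0$-adic completion, reducing everything to the Noetherian algebraic statement over $\widehat{\Ocal}_{B_0,0}$. Everything else -- identifying the line bundles on the blow-up and the filtration of $\Fcal(-k)$ along the thickenings $Z_m$ -- is routine.
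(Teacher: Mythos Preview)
Your proposal is correct. Your ``alternative'' route for the final clause---tensoring the ideal-sheaf sequence of $\PBbb^1$ by $\Fcal(-j)$, pushing forward, and using $H^1(\PBbb^1,\Ocal_{\PBbb^1}(j)^{\oplus r})=0$ to get surjections $R^1p_*\Fcal(-j-1)\twoheadrightarrow R^1p_*\Fcal(-j)$ descending from the vanishing at large $k$---is exactly the argument the paper gives; for the first assertion the paper simply declares it ``well-known'' (negativity of the exceptional divisor), whereas you supply the actual relative Serre vanishing / formal-functions justification, which is a welcome addition rather than a genuinely different strategy.
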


\begin{proof}
    The first statement is well-known which is essentially due to that the exceptional divisor is negative. The second statement now follows from 
    $$
    R^1p_*\Fcal(-k) \hookrightarrow R^1p_* \Fcal(-k+1)
    $$
    is surjective for any $k\geq 1$ under the given assumption. Indeed, using the natural short exact sequence 
    $$
    0 \rightarrow \Fcal(-k) \rightarrow \Fcal(-k+1) \rightarrow \Fcal(-k+1)|_{\PBbb^1} \rightarrow 0
    $$
    where $\Fcal(-k+1)|_{\PBbb^1}\cong \Ocal_{\PBbb^1}(k-1)^{\oplus 2}$ and that $H^1(\PBbb^1, \Ocal_{\PBbb^1}(k-1))=0$ for $k\geq 1$, we can obtain the claimed surjective map by pushing down the exact sequence above to be over $B$.
\end{proof}

There are useful geometric properties implied by the condition of triviality at infinity.
\begin{lem}\label{Thm1.5}
    Suppose $\widehat{\Ecal}|_{\PBbb^1}\cong\Ocal_{\PBbb^1} \oplus \Ocal_{\PBbb^1}$, then 
    $$p_*\widehat{\Ecal}\cong \Ecal$$ 
    and 
    $$
     k(\Ecal)=\frac{\Delta_{\Ecal}}{4}.
    $$
\end{lem}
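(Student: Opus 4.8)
My plan is to handle the two assertions separately: the identity $p_*\widehat\Ecal\cong\Ecal$ is a formal cohomological matter, while the equality $k(\Ecal)=\Delta_\Ecal/4$ is a ``conservation of discriminant'' statement carrying the real content. For the first, by Lemma~\ref{PushImgageStabilizes} there is an ascending chain $p_*\widehat\Ecal\subseteq p_*\widehat\Ecal(1)\subseteq\cdots$ stabilizing to $\Ecal$, so it suffices to show each $p_*\widehat\Ecal(k)\hookrightarrow p_*\widehat\Ecal(k+1)$ is onto for $k\ge0$. Tensoring $0\to\Ocal_{\widehat B}\to\Ocal_{\widehat B}(1)\to\Ocal_{\widehat B}(1)|_{\PBbb^2}\to0$ by $\widehat\Ecal$ gives $0\to\widehat\Ecal(k)\to\widehat\Ecal(k+1)\to\iota_*\big(\widehat\Ecal|_{\PBbb^2}(-k-1)\big)\to0$ (using that $\Ocal_{\widehat B}(1)$ restricts to $\Ocal_{\PBbb^2}(-1)$), and applying $p_*$ reduces the claim to $H^0\big(\PBbb^2,\widehat\Ecal|_{\PBbb^2}(-m)\big)=0$ for all $m\ge1$. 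Since $\widehat\Ecal|_{\PBbb^2}$ is torsion free (restriction of a reflexive sheaf on the smooth threefold $\widehat B$ to the smooth divisor $\PBbb^2$), restriction to the line $\PBbb^1$ gives $0\to\widehat\Ecal|_{\PBbb^2}(-m-1)\to\widehat\Ecal|_{\PBbb^2}(-m)\to\Ocal_{\PBbb^1}(-m)^{\oplus2}\to0$, whose last term has no sections for $m\ge1$; iterating and using that a torsion free sheaf on $\PBbb^2$ has no sections after a large negative twist yields the vanishing.

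For the second assertion I would first record some reductions. Triviality at infinity forces $\deg\widehat\Ecal|_{\PBbb^2}=\deg\widehat\Ecal|_{\PBbb^1}=0$, so $c_1(\widehat\Ecal|_{\PBbb^2})=0$ and $\Delta(\widehat\Ecal|_{\PBbb^2})=4c_2(\widehat\Ecal|_{\PBbb^2})$. Next, $\widehat\Ecal|_{\widehat{B_0}}$ is \emph{locally free}: it is torsion free on the smooth surface $\widehat{B_0}$ and its restriction to $\PBbb^1$ is $\Ocal_{\PBbb^1}^{\oplus2}$, torsion free on the curve $\PBbb^1$; restricting $0\to\widehat\Ecal|_{\widehat{B_0}}\to(\widehat\Ecal|_{\widehat{B_0}})^{**}\to T\to0$ (with $T$ of finite length, supported on $\PBbb^1$) to $\PBbb^1$ forces $\Tor_1(T,\Ocal_{\PBbb^1})=0$, and since the local equation of $\PBbb^1$ acts nilpotently on the finite length $T$ this forces $T=0$; lifting local freeness across the hypersurface $\widehat{B_0}$ then shows $\widehat\Ecal$, hence $\widehat\Ecal|_{\PBbb^2}$, is locally free along $\PBbb^1$. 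Then $\widehat\Ecal|_{\PBbb^2}$ is \emph{semistable}, hence is the normalized semistable extension and $\Delta_\Ecal=\Delta(\widehat\Ecal|_{\PBbb^2})$: for a general line $\ell$ near $\PBbb^1$ one gets $\widehat\Ecal|_{\PBbb^2}|_\ell\cong\Ocal_\ell^{\oplus2}$ (upper semicontinuity of $h^0$ of a negative twist, starting from $\ell=\PBbb^1$), so a destabilizing sub-line-bundle of positive degree, restricted to such an $\ell$ chosen off its finite non-subbundle locus, would embed a positive degree line bundle into $\Ocal_\ell^{\oplus2}$, which is impossible. Finally, writing $q=p|_{\widehat{B_0}}$: Lemma~\ref{Lem2.7} (using local freeness and triviality at infinity) gives $R^1q_*\big(\widehat\Ecal|_{\widehat{B_0}}\big)=0$, and the theorem on formal functions (triviality on $\PBbb^1$ propagates to the formal neighbourhood) makes $q_*\big(\widehat\Ecal|_{\widehat{B_0}}\big)$ locally free on $B_0$; as it agrees with $\Ecal|_{B_0}$ away from $0$, it must be $\Ecal|_{B_0}^{**}$. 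After these reductions it remains to prove $c_2(\widehat\Ecal|_{\PBbb^2})=k(\Ecal)$.

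For the numerical identity I would push the two sequences $0\to\widehat\Ecal(1)\to\widehat\Ecal\to\widehat\Ecal|_{\widehat{B_0}}\to0$ (coming from $\Ical_{\widehat{B_0}}\cong\Ocal_{\widehat B}(1)$) and $0\to\widehat\Ecal\to\widehat\Ecal(1)\to\iota_*\big(\widehat\Ecal|_{\PBbb^2}(-1)\big)\to0$ down to $B$ and play the resulting long exact sequences against each other. Using the first assertion ($p_*\widehat\Ecal(1)\cong p_*\widehat\Ecal\cong\Ecal$, with the connecting map identified as multiplication by $z$, and $H^0(\widehat\Ecal|_{\PBbb^2}(-1))=0$), the identification $q_*\big(\widehat\Ecal|_{\widehat{B_0}}\big)\cong\Ecal|_{B_0}^{**}$ with $R^1q_*=0$, and $R^{\ge2}q_*=0$, the first sequence yields $0\to\Ecal|_{B_0}\to\Ecal|_{B_0}^{**}\to R^1p_*\widehat\Ecal(1)\to R^1p_*\widehat\Ecal\to0$ together with $R^2p_*\widehat\Ecal(1)\cong R^2p_*\widehat\Ecal$, while the second yields $0\to R^1p_*\widehat\Ecal\to R^1p_*\widehat\Ecal(1)\to H^1(\widehat\Ecal|_{\PBbb^2}(-1))\to R^2p_*\widehat\Ecal\to R^2p_*\widehat\Ecal(1)\to H^2(\widehat\Ecal|_{\PBbb^2}(-1))\to0$. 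The first shows the length of $R^1p_*\widehat\Ecal(1)$ exceeds that of $R^1p_*\widehat\Ecal$ by $k(\Ecal)$; feeding this together with equality of the lengths of $R^2p_*\widehat\Ecal(1)$ and $R^2p_*\widehat\Ecal$ into the alternating-length identity of the second sequence gives $k(\Ecal)=h^1(\widehat\Ecal|_{\PBbb^2}(-1))-h^2(\widehat\Ecal|_{\PBbb^2}(-1))=-\chi(\PBbb^2,\widehat\Ecal|_{\PBbb^2}(-1))$, which Riemann--Roch on $\PBbb^2$ (with $c_1=0$) evaluates to $c_2(\widehat\Ecal|_{\PBbb^2})$; with the semistability reduction this finishes the proof.

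The main obstacle is the package of structural facts in the second paragraph: restricting $\widehat\Ecal$ to $\PBbb^1$ and to $\widehat{B_0}$, and controlling $q_*\big(\widehat\Ecal|_{\widehat{B_0}}\big)$, are exactly the codimension-two phenomena flagged in the introduction — the pullback $p^*\Ecal$ acquires torsion along $\PBbb^1$, restrictions to $\PBbb^1$ can a priori carry torsion, and so on — and proving semistability of the bubble is where triviality at infinity is genuinely used. I would also note that running the last computation for an arbitrary reflexive extension, where $R^1q_*$ need not vanish, produces the inequality $\Delta(\widehat\Ecal|_{\PBbb^2})/4\le k(\Ecal)$ and hence parts (2)--(3) of Theorem~\ref{Thm1.1}; alternatively one can import that inequality from \cite{ChenSun:20b} to obtain the semistability reduction for free.
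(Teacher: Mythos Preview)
Your proof is correct and follows the same route as the paper's: both push down the pair of short exact sequences $0\to\widehat\Ecal(1)\to\widehat\Ecal\to\widehat\Ecal|_{\widehat{B_0}}\to0$ and $0\to\widehat\Ecal\to\widehat\Ecal(1)\to\iota_*(\widehat\Ecal|_{\PBbb^2}(-1))\to0$ and extract $k(\Ecal)=c_2(\widehat\Ecal|_{\PBbb^2})$ via an Euler-characteristic comparison and Riemann--Roch on $\PBbb^2$. Where you differ is in rigor rather than strategy: the paper tacitly invokes semistability of $\widehat\Ecal|_{\PBbb^2}$ and local freeness of $\widehat\Ecal|_{\widehat{B_0}}$ (the latter needed to apply Lemma~\ref{Lem2.7} and to identify $p_*(\widehat\Ecal|_{\widehat{B_0}})\cong\Ocal_{B_0}^{\oplus 2}$) without justification, whereas you derive both directly from the hypothesis $\widehat\Ecal|_{\PBbb^1}\cong\Ocal_{\PBbb^1}^{\oplus 2}$ --- these are genuine gaps in the paper's presentation that your argument fills.
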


\begin{proof}
   We first prove $p_*\widehat{\Ecal}\cong \Ecal$. By Lemma \ref{PushImgageStabilizes}, for $k$ large 
    $$
    p_*(\widehat{\Ecal}(k))=p_*(\widehat{\Ecal}(k+1))=\cdots \Ecal,
    $$
    it suffices to show that 
    $$
    p_*(\widehat{\Ecal})=p_*(\widehat{\Ecal}(k))
    $$
    for any $k\geq 0$. For this, by pushing forward the following exact sequence
    $$
0\rightarrow \widehat{\Ecal}(k)\rightarrow \widehat{\Ecal}(k+1) \rightarrow \widehat{\Ecal}|_{\PBbb^2}\otimes \Ocal_{\PBbb^2}(-k-1) \rightarrow 0
    $$
    to be over $B$, we have 
       $$
0\rightarrow p_*(\widehat{\Ecal}(k))\rightarrow p_*(\widehat{\Ecal}(k+1)) \rightarrow H^0(\PBbb^2, \widehat{\Ecal}|_{\PBbb^2}\otimes \Ocal_{\PBbb^2}(-k-1))=0
    $$
    where the last equality follows from that $\widehat{\Ecal}|_{\PBbb^2}$ is semistable of degree zero, which forces the first map to be an isomorphism. The claimed equality above follows from induction. It remains to show $$\Delta(\widehat{\Ecal}|_{\PBbb^2})=4 k(\Ecal).$$ 
    For this, push down the following exact sequence 
    $$
    0 \rightarrow \widehat{\Ecal}(-\widehat{B_0}) \rightarrow \widehat{\Ecal} \rightarrow \widehat{\Ecal}|_{\widehat{B_0}} \rightarrow 0
    $$
    to be over $B$, and using $\widehat{\Ecal}(-\widehat{B_0}) \cong \widehat{\Ecal}(1)$, we get 
    $$
    \begin{aligned}
    &0\rightarrow \Ecal \xrightarrow{z} \Ecal \rightarrow \Ocal_{B_0}^{\oplus 2} \\
    &\rightarrow R^1p_*(\widehat{\Ecal}(1)) \rightarrow R^1p_*(\widehat{\Ecal}) \rightarrow R^1p_*(\widehat{\Ecal}|_{\widehat{B_0}})\\
    &\rightarrow R^2p_*(\widehat{\Ecal}(1)) \rightarrow R^2p_*(\widehat{\Ecal}) \rightarrow R^2 p_*(\widehat{\Ecal}|_{\widehat{B_0}})\\
    &\rightarrow R^3 p_*(\widehat{\Ecal}(1)) \rightarrow R^3 p_*(\widehat{\Ecal}) \rightarrow R^3 p_*(\widehat{\Ecal}|_{\widehat{B_0}}) \rightarrow 0
    \end{aligned}
    $$
    where the first row gives
    $$
    0\rightarrow \Ecal|_{B_0} \rightarrow \Ocal_{B_0}^{\oplus 2}=(\Ecal|_{B_0})^{**}.
    $$
    This implies
    $$
    \begin{aligned}
    k(\Ecal)&=\chi'(R^\bullet p_* \widehat{\Ecal})-\chi'(R^\bullet p_* (\widehat{\Ecal}(1)))-\chi'(R^\bullet p_* (\widehat{\Ecal}|_{\widehat{B}_0}))\\
    &=\chi'(\widehat{\Ecal}|_{\PBbb^2}(-1))\\
    &=\Xcal(\widehat{\Ecal}|_{\PBbb^2}(-1))\\
    &=c_2(\widehat{\Ecal}|_{\PBbb^2}).
    \end{aligned}
    $$  
    Here 
    $$
    \chi'(R^\bullet p_* \widehat{\Fcal}):=\sum_{i\geq 1} (-1)^i \dim_{\CBbb} R^ip_*(\widehat{\Fcal})_0;
    $$ 
    the second equality follows from $R^\bullet p_*(\widehat{\Ecal}|_{\widehat{B_0}})$ having zero stalk at $0$ for $p\geq 1$ by Lemma \ref{Lem2.7}; the third equality follows from $H^0(\PBbb^2, \widehat{\Ecal}|_{\PBbb^2}(-1))=0$ since $\widehat{\Ecal}|_{\PBbb^2}$ is semistable of degree $0$; the last equality follows from a straightforward computation using the Riemann-Roch theorem for sheaves over surface.
\end{proof}

The computation above gives more in general.
\begin{cor}\label{computation}
    If 
$$
\widehat{\Ecal}|_{\PBbb^1}=\Ocal_{\PBbb^1} \oplus \Ocal_{\PBbb^1}(-1)
$$
and $\widehat{\Ecal}|_{\PBbb^2}$ is stable, then 
$$
c_2(\widehat{\Ecal}|_{\PBbb^2})=k(\Ecal).
$$
In general, for the normalized semistable extension $\widehat{\Ecal}$ of $\Ecal$, the following holds
    $$
    \dim_{\CBbb} (p_*(\widehat{\Ecal}|_{\widehat{B_0}})/\Ecal|_{B_0})_0=c_2(\widehat{\Ecal}|_{\PBbb^2})-\Xcal'(R^{\bullet} p_*(\widehat{\Ecal}|_{\widehat{B_0}})) \leq k(\Ecal).
    $$
\end{cor}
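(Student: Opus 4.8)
The plan is to re-run the long exact sequence computation from the proof of Lemma \ref{Thm1.5}, but now keeping track of the terms that vanished there because of the triviality-at-infinity hypothesis. Concretely, I would push forward the sequence
$$
0 \rightarrow \widehat{\Ecal}(1) \cong \widehat{\Ecal}(-\widehat{B_0}) \rightarrow \widehat{\Ecal} \rightarrow \widehat{\Ecal}|_{\widehat{B_0}} \rightarrow 0
$$
to be over $B$ and read off, from the zeroth-order part of the resulting long exact sequence, the identity
$$
\dim_{\CBbb}\bigl(p_*(\widehat{\Ecal}|_{\widehat{B_0}})/\Ecal|_{B_0}\bigr)_0
= \chi'(R^\bullet p_* \widehat{\Ecal}) - \chi'(R^\bullet p_*(\widehat{\Ecal}(1))) - \chi'(R^\bullet p_*(\widehat{\Ecal}|_{\widehat{B_0}})) - \Xcal'(R^\bullet p_*(\widehat{\Ecal}|_{\widehat{B_0}})),
$$
where the first three Euler characteristics again combine to $\chi'(\widehat{\Ecal}|_{\PBbb^2}(-1))$. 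Here the point is that $p_*(\widehat{\Ecal}(1)) = p_*(\widehat{\Ecal}) = \Ecal$ still holds for the normalized semistable extension (the argument in Lemma \ref{Thm1.5} using $H^0(\PBbb^2,\widehat{\Ecal}|_{\PBbb^2}(-k-1)) = 0$ only needs semistability of degree $\le 0$, which is exactly the normalization $c_1 \in \{0,-1\}$), so the alternating sum of stalk dimensions in degrees $\ge 1$ telescopes as before, while the degree-zero part of the sequence now reads
$$
0 \rightarrow \Ecal|_{B_0} \xrightarrow{} \Ocal_{B_0}^{\oplus ?} \text{ (the fiber of } \widehat{\Ecal}|_{\widehat{B_0}} \text{)} \rightarrow p_*(\widehat{\Ecal}|_{\widehat{B_0}}) \rightarrow \cdots
$$
and $p_*(\widehat{\Ecal}|_{\widehat{B_0}})$ need no longer coincide with $(\Ecal|_{B_0})^{**}$; the discrepancy is precisely the quantity we want to measure.

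For the first, more precise assertion, when $\widehat{\Ecal}|_{\PBbb^1} \cong \Ocal_{\PBbb^1} \oplus \Ocal_{\PBbb^1}(-1)$ and $\widehat{\Ecal}|_{\PBbb^2}$ is stable, I would argue that the correction term $\Xcal'(R^\bullet p_*(\widehat{\Ecal}|_{\widehat{B_0}}))$ vanishes and that $p_*(\widehat{\Ecal}|_{\widehat{B_0}})$ is already reflexive on the surface $B_0$, hence equals $(\Ecal|_{B_0})^{**}$, so that $\dim(p_*(\widehat{\Ecal}|_{\widehat{B_0}})/\Ecal|_{B_0})_0 = k(\Ecal)$; then $\chi'(\widehat{\Ecal}|_{\PBbb^2}(-1)) = c_2(\widehat{\Ecal}|_{\PBbb^2})$ by Riemann–Roch, since stability of degree $-1$ forces $H^0(\PBbb^2,\widehat{\Ecal}|_{\PBbb^2}(-1)) = 0$ (and the other cohomologies contribute as in the surface computation), yielding $c_2(\widehat{\Ecal}|_{\PBbb^2}) = k(\Ecal)$. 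The vanishing of the correction term should follow from Lemma \ref{Lem2.7}: with $\widehat{\Ecal}|_{\PBbb^1} \cong \Ocal_{\PBbb^1}\oplus\Ocal_{\PBbb^1}(-1)$ one twists down and uses $H^1(\PBbb^1, \Ocal_{\PBbb^1}(j)\oplus\Ocal_{\PBbb^1}(j-1)) = 0$ for $j \ge 1$ exactly as in that lemma, so $R^ip_*(\widehat{\Ecal}|_{\widehat{B_0}})$ has trivial stalk at $0$ for all $i \ge 1$.

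For the general inequality, I would not expect the correction term to vanish, but I claim it is nonnegative — or more precisely that the combined quantity $c_2(\widehat{\Ecal}|_{\PBbb^2}) - \Xcal'(R^\bullet p_*(\widehat{\Ecal}|_{\widehat{B_0}}))$ is bounded above by $k(\Ecal)$. The cleanest route is the inclusion $\Ecal|_{B_0} \hookrightarrow p_*(\widehat{\Ecal}|_{\widehat{B_0}}) \hookrightarrow (\Ecal|_{B_0})^{**}$: the second map comes from the fact that $p_*(\widehat{\Ecal}|_{\widehat{B_0}})$ agrees with $\Ecal|_{B_0}$ away from $0$ (both restrict the same sheaf over $\widehat{B}\setminus\PBbb^2 \to B\setminus 0$) and is torsion free, hence embeds into its own double dual, which is $(\Ecal|_{B_0})^{**}$ by reflexivity of the latter on the smooth surface $B_0$. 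Then
$$
\dim_{\CBbb}\bigl(p_*(\widehat{\Ecal}|_{\widehat{B_0}})/\Ecal|_{B_0}\bigr)_0 \le \dim_{\CBbb}\bigl((\Ecal|_{B_0})^{**}/\Ecal|_{B_0}\bigr)_0 = k(\Ecal),
$$
and the displayed equation of the corollary identifies the left-hand side with $c_2(\widehat{\Ecal}|_{\PBbb^2}) - \Xcal'(R^\bullet p_*(\widehat{\Ecal}|_{\widehat{B_0}}))$, using again that $\chi'(\widehat{\Ecal}|_{\PBbb^2}(-1)) = \Xcal(\widehat{\Ecal}|_{\PBbb^2}(-1)) = c_2(\widehat{\Ecal}|_{\PBbb^2})$ because for the normalized extension $H^0(\PBbb^2,\widehat{\Ecal}|_{\PBbb^2}(-1)) = 0$ (semistable, $c_1 \in \{0,-1\}$) — this last vanishing is where I need to be slightly careful in the degree-$(-1)$ case, checking that $H^2$ contributes the Euler-characteristic shift correctly via Serre duality.

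The main obstacle I anticipate is bookkeeping rather than conceptual: making sure the degree-zero ($i=0$) portion of the long exact sequence is threaded correctly, in particular that the map $\Ocal_{B_0}$-fiber-of-$\widehat{\Ecal}|_{\widehat{B_0}} \to p_*(\widehat{\Ecal}|_{\widehat{B_0}})$ is the right one and that no hidden $R^1$ term in degree zero (coming from $R^1p_*\widehat{\Ecal}(1)$) contaminates the identification of $(p_*(\widehat{\Ecal}|_{\widehat{B_0}})/\Ecal|_{B_0})_0$ with the stated difference. The identification of $p_*(\widehat{\Ecal}|_{\widehat{B_0}})^{**}$ with $(\Ecal|_{B_0})^{**}$ — i.e. that pushing forward and then doubly dualizing on $B_0$ recovers the double dual of the central fiber — also deserves a careful one-line justification via the codimension-two agreement of the two sheaves on $B_0\setminus 0$.
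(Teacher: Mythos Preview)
Your approach is correct and is exactly what the paper has in mind: the corollary is stated with the remark ``The computation above gives more in general,'' and your proposal is precisely the natural re-run of the long exact sequence from Lemma~\ref{Thm1.5}, now tracking the two terms that vanished there (the correction $\chi'(R^\bullet p_*(\widehat{\Ecal}|_{\widehat{B_0}}))$ and the gap between $p_*(\widehat{\Ecal}|_{\widehat{B_0}})$ and $(\Ecal|_{B_0})^{**}$). Your justifications---$p_*\widehat{\Ecal}(k)=\Ecal$ for $k\ge 0$ needing only semistability with $c_1\in\{0,-1\}$, the adaptation of Lemma~\ref{Lem2.7} to $\Ocal\oplus\Ocal(-1)$, and the inequality via $p_*(\widehat{\Ecal}|_{\widehat{B_0}})\hookrightarrow(\Ecal|_{B_0})^{**}$---are the right ones.

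One bookkeeping slip: in your first displayed identity you have written both $-\chi'(R^\bullet p_*(\widehat{\Ecal}|_{\widehat{B_0}}))$ and $-\Xcal'(R^\bullet p_*(\widehat{\Ecal}|_{\widehat{B_0}}))$, and then say ``the first three Euler characteristics combine to $\chi'(\widehat{\Ecal}|_{\PBbb^2}(-1))$.'' This double-counts the correction term. In the paper's notation $\Xcal'$ and $\chi'$ are the same object; the correct identity is
\[
\dim_{\CBbb}\bigl(p_*(\widehat{\Ecal}|_{\widehat{B_0}})/\Ecal|_{B_0}\bigr)_0
= \bigl[\chi'(R^\bullet p_*\widehat{\Ecal}) - \chi'(R^\bullet p_*(\widehat{\Ecal}(1)))\bigr] - \chi'(R^\bullet p_*(\widehat{\Ecal}|_{\widehat{B_0}})),
\]
and it is the bracketed difference alone (not all three) that equals $c_2(\widehat{\Ecal}|_{\PBbb^2})$ via the sequence $0\to\widehat{\Ecal}\to\widehat{\Ecal}(1)\to\widehat{\Ecal}|_{\PBbb^2}(-1)\to 0$ together with $H^0(\PBbb^2,\widehat{\Ecal}|_{\PBbb^2}(-1))=0$. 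Once you fix this, your argument goes through. The reflexivity of $p_*(\widehat{\Ecal}|_{\widehat{B_0}})$ in the $\Ocal\oplus\Ocal(-1)$ case, which you flag at the end, can be handled by the same stabilization trick: $H^0(\PBbb^1,\Ocal(-k)\oplus\Ocal(-k-1))=0$ for $k\ge 1$ gives $p_*(\widehat{\Ecal}|_{\widehat{B_0}})=p_*(\widehat{\Ecal}|_{\widehat{B_0}}(k))$ for all $k\ge 0$, hence equals the stable value $(\Ecal|_{B_0})^{**}$.
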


Now we give a new characterization of the condition of triviality at infinity using the discriminant.

\begin{prop}\label{BubbleCharaterization}
Given an extension $\widehat{\Ecal}$, the following are equivalent 
\begin{enumerate}
    \item $\widehat{\Ecal}|_{\PBbb^1}\cong\Ocal_{\PBbb^1} \oplus \Ocal_{\PBbb^1}$;
    \item $\Delta(\widehat{\Ecal}|_{\PBbb^2})=4 k(\Ecal)$.
\end{enumerate}
\end{prop}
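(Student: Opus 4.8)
The plan is to treat the two implications separately, the harder being (2)$\Rightarrow$(1). Throughout set $\Fcal:=\widehat\Ecal|_{\widehat{B_0}}$. Since $\widehat{B_0}$ is a smooth hypersurface in $\widehat{B}$ and $\widehat\Ecal$ is reflexive, $\Fcal$ is torsion-free on the surface $\widehat{B_0}$, and it coincides with $\Ecal|_{B_0}$ over $\widehat{B_0}\setminus\PBbb^1\cong B_0\setminus\{0\}$; hence $p_*\Fcal$ is torsion-free on $B_0$ and shares the reflexive hull of $\Ecal|_{B_0}$, namely $(p_*\Fcal)^{**}\cong(\Ecal|_{B_0})^{**}\cong\Ocal_{B_0}^{\oplus2}$. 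I will use this dictionary between $\widehat{B_0}$ and $B_0$ repeatedly.

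For (1)$\Rightarrow$(2): first I would note that triviality at infinity forces $\widehat\Ecal|_{\PBbb^2}$ to be semistable of degree $0$. Indeed $c_1(\widehat\Ecal|_{\PBbb^2})\cdot[\PBbb^1]=\deg(\widehat\Ecal|_{\PBbb^1})=0$ gives $c_1(\widehat\Ecal|_{\PBbb^2})=0$; and if $\widehat\Ecal|_{\PBbb^2}$ were unstable, its maximal destabilizing subsheaf would be a saturated line bundle $\Ocal_{\PBbb^2}(a)$ with $a\ge1$ and torsion-free quotient, so restricting the inclusion to $\PBbb^1$ — where the relevant $\Tor^1$ vanishes because the quotient is torsion-free — would embed $\Ocal_{\PBbb^1}(a)$ into $\Ocal_{\PBbb^1}^{\oplus2}$, which is impossible. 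Thus $\widehat\Ecal$ is a semistable extension, so $\Delta(\widehat\Ecal|_{\PBbb^2})=\Delta_\Ecal$, and Lemma \ref{Thm1.5} gives $\Delta_\Ecal=4k(\Ecal)$, i.e.\ (2).

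For (2)$\Rightarrow$(1): by Proposition \ref{Prop2.6} together with the bound $\Delta(\widehat\Ecal|_{\PBbb^2})\le4k(\Ecal)$ of Theorem \ref{Thm1.1}(2), the hypothesis $\Delta(\widehat\Ecal|_{\PBbb^2})=4k(\Ecal)$ says that $\widehat\Ecal|_{\PBbb^2}$ has maximal discriminant, hence is semistable; after a twist normalizing $c_1(\widehat\Ecal|_{\PBbb^2})\in\{0,-1\}$, divisibility by $4$ forces $c_1=0$, so $\det\widehat\Ecal\cong\Ocal_{\widehat{B}}$ and $c_2(\widehat\Ecal|_{\PBbb^2})=k(\Ecal)$. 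The crucial move is then to substitute $c_2(\widehat\Ecal|_{\PBbb^2})=k(\Ecal)$ into Corollary \ref{computation}: since $\Xcal'(R^{\bullet}p_*\Fcal)=-\dim_\CBbb R^1p_*(\Fcal)_0\le0$ (the higher direct images vanish on the surface), the inequality there must be an equality, forcing $R^1p_*\Fcal=0$ and $\dim_\CBbb(p_*\Fcal/\Ecal|_{B_0})_0=k(\Ecal)$. Comparing this length with $\dim_\CBbb((\Ecal|_{B_0})^{**}/\Ecal|_{B_0})_0=k(\Ecal)$ along $\Ecal|_{B_0}\subseteq p_*\Fcal\subseteq(p_*\Fcal)^{**}\cong\Ocal_{B_0}^{\oplus2}$ then forces $p_*\Fcal\cong\Ocal_{B_0}^{\oplus2}$.

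Finally I would push these two facts back upstairs. The canonical map $\Ocal_{\widehat{B_0}}^{\oplus2}=p^*p_*\Fcal\to\Fcal$ is injective (its source is torsion-free) and an isomorphism over $\widehat{B_0}\setminus\PBbb^1$, so its cokernel $\Qcal$ is a torsion sheaf set-theoretically supported on $\PBbb^1$; additivity of first Chern classes together with $\det\Fcal\cong\Ocal_{\widehat{B_0}}$ gives $c_1(\Qcal)=0$, which forces $\Qcal$ to have zero-dimensional support, i.e.\ finite length. Pushing $0\to\Ocal_{\widehat{B_0}}^{\oplus2}\to\Fcal\to\Qcal\to0$ down to $B_0$, using $R^1p_*\Ocal_{\widehat{B_0}}=0$ and that the pushforward of the map is the identity of $\Ocal_{B_0}^{\oplus2}$, one gets $p_*\Qcal=0$, hence $\Qcal=0$ since it has finite length; therefore $\Fcal\cong\Ocal_{\widehat{B_0}}^{\oplus2}$ and in particular $\widehat\Ecal|_{\PBbb^1}=\Fcal|_{\PBbb^1}\cong\Ocal_{\PBbb^1}^{\oplus2}$. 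The hard part is precisely this last reduction: a priori $\widehat\Ecal|_{\PBbb^1}$ could carry torsion or negative summands — the pathologies of restricting to a codimension-two locus emphasized in the introduction — and the real content is that the two constraints squeezed out of $\Delta(\widehat\Ecal|_{\PBbb^2})=4k(\Ecal)$, namely $R^1p_*\Fcal=0$ and the maximality of $p_*\Fcal$, are exactly enough to trivialize $\Fcal$ on all of $\widehat{B_0}$, after which the restriction to $\PBbb^1$ is automatically tame. The remaining ingredients — the $\Tor$-vanishing computations, $R^ip_*=0$ for $i\ge2$ and $R^1p_*\Ocal_{\widehat{B_0}}=0$ on the blow-up, and the projection-formula identification of the pushforward map — are routine.
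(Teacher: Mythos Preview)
Your proof is correct, and the first half of each direction matches the paper's: for $(1)\Rightarrow(2)$ you both invoke Lemma~\ref{Thm1.5}, and for $(2)\Rightarrow(1)$ you both extract from Corollary~\ref{computation} the two equalities $p_*\Fcal\cong\Ocal_{B_0}^{\oplus 2}$ and $R^1p_*\Fcal=0$. (Your explicit verification that $(1)$ forces $\widehat\Ecal|_{\PBbb^2}$ to be semistable of degree~$0$ is a point the paper leaves implicit.)

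The genuine divergence is in how you finish $(2)\Rightarrow(1)$. The paper works directly on $\PBbb^1$: it pushes down the sequence $0\to\Fcal\to\Fcal(1)\to\Fcal|_{\PBbb^1}(-1)\to 0$ and, using that $p_*\Fcal\to p_*\Fcal(1)$ is an isomorphism between copies of $\Ocal_{B_0}^{\oplus 2}$ together with $R^1p_*\Fcal=0$, reads off $H^0(\PBbb^1,\Fcal|_{\PBbb^1}(-1))=0$; this kills any torsion in $\Fcal|_{\PBbb^1}$ and forces the splitting type $\Ocal_{\PBbb^1}(k)\oplus\Ocal_{\PBbb^1}(-k)$ to have $k=0$. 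You instead globalize: the adjunction map $\Ocal_{\widehat{B_0}}^{\oplus 2}=p^*p_*\Fcal\to\Fcal$ has cokernel $\Qcal$ with $c_1(\Qcal)=0$ (from $\det\widehat\Ecal\cong\Ocal_{\widehat B}$), hence zero-dimensional support, and then $p_*\Qcal=0$ via the triangle identity forces $\Qcal=0$. Your route yields the stronger statement $\widehat\Ecal|_{\widehat{B_0}}\cong\Ocal_{\widehat{B_0}}^{\oplus 2}$ on the whole strict transform, from which triviality on $\PBbb^1$ is immediate; the paper's route is more elementary in that it avoids any Chern-class bookkeeping on $\widehat{B_0}$ and stays entirely within the cohomology of line bundles on $\PBbb^1$.
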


\begin{proof}
    The fact that (1) implies (2) follows from Lemma \ref{Thm1.5}. To show that (2) implies (1), suppose $\Delta(\widehat{\Ecal}|_{\PBbb^2})=4 k(\Ecal)$, by Corollary \ref{computation}, we know $\widehat{\Ecal}|_{\PBbb^2}$ must be semistable of degree zero and
    $$
    p_*(\widehat{\Ecal}|_{\widehat{B_0}})\cong\Ocal_{B_0} \oplus \Ocal_{B_0}
    $$
    and 
    $$
    R^1 p_*(\widehat{\Ecal}|_{\widehat{B_0}})=0.
    $$
  Now we show that this implies
    $$
    \widehat{\Ecal}|_{\PBbb^1}\cong\Ocal_{\PBbb^1}^{\oplus 2}.
    $$
    For this, suppose 
    $$\widehat{\Ecal}|_{\PBbb^1}=\widehat{\Ecal}|_{\PBbb^1}/\tau \oplus \tau$$ 
    where $\tau$ denotes the torsion part of $\widehat{\Ecal}|_{\PBbb^1}$. We need to show that $\tau=0$ and 
    $$
    \widehat{\Ecal}|_{\PBbb^1}/\tau \cong \Ocal_{\PBbb^1}^{\oplus 2}.
    $$ 
    Consider 
    $$
    0 \rightarrow \widehat{\Ecal}|_{B_0} \rightarrow \widehat{\Ecal}|_{B_0}(1) \rightarrow (\widehat{\Ecal}|_{\PBbb^1}/\tau)(-1) \oplus \tau\rightarrow 0 
    $$
    and push it down to $B_0$ to get 
    $$
    0 \rightarrow \Ecal|^{**}_{B_0} \xrightarrow{\cong} \Ecal|^{**}_{B_0} \rightarrow H^0(\PBbb^1, (\widehat{\Ecal}|_{\PBbb^1}/\tau)(-1) \oplus \tau) \rightarrow R^1 p_*(\widehat{\Ecal}|_{\widehat{B_0}})=0.
    $$
    In particular, we have 
    $$
    H^0(\PBbb^1, (\widehat{\Ecal}|_{\PBbb^1}/\tau)(-1) \oplus \tau)=0
    $$
    which forces $\tau=0$. Thus we can assume 
    $$
    \widehat{\Ecal}|_{\PBbb^1}\cong \Ocal_{\PBbb^1}(k) \oplus \Ocal_{\PBbb^1}(-k)
    $$ 
    for some $k\geq 0$. From the above, we know
    $$
    H^0(\PBbb^1, \Ocal_{\PBbb^1}(k-1))=0
    $$
    which implies $k=0$. In particular, we must have
    $$
     \widehat{\Ecal}|_{\widehat{\PBbb^1}}\cong\Ocal_{\widehat{\PBbb^1}}^{\oplus 2}.
    $$
    This finishes the proof.
\end{proof}
This gives the new characterization of the fertile family using the discriminant. 
\begin{cor}
Given an extension $\widehat{\Ecal}$ of $\Ecal$, $\widehat{\Ecal}|_{\PBbb^2}$ is a bubble if and only if $\Delta(\widehat{\Ecal}|_{\PBbb^2})=k(\Ecal)$ and $\widehat{\Ecal}|_{\PBbb^2}^{**}\ncong \Ocal_{\PBbb^2}^{\oplus 2}$.
\end{cor}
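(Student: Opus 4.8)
The plan is to read the corollary off directly from Proposition~\ref{BubbleCharaterization} together with the definition of a bubble, so essentially no new work is needed. Unwinding the definition, $\widehat{\Ecal}|_{\PBbb^2}$ is a bubble for $\Ecal$ exactly when the reflexive extension $\widehat{\Ecal}$ satisfies both triviality at infinity, $\widehat{\Ecal}|_{\PBbb^1}\cong\Ocal_{\PBbb^1}^{\oplus 2}$, and nontriviality, $\widehat{\Ecal}|_{\PBbb^2}^{**}\ncong\Ocal_{\PBbb^2}^{\oplus 2}$. I would then invoke Proposition~\ref{BubbleCharaterization}, which for an arbitrary reflexive extension identifies the triviality-at-infinity condition with the numerical identity on $\Delta(\widehat{\Ecal}|_{\PBbb^2})$; substituting this equivalence into the definition and leaving the second, non-isomorphism condition untouched yields exactly the stated characterization.

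To spell out the two directions: for the forward implication there is nothing to prove beyond quoting these two facts — if $\widehat{\Ecal}|_{\PBbb^2}$ is a bubble, then triviality at infinity gives the numerical identity by Proposition~\ref{BubbleCharaterization} (and in this case $\widehat{\Ecal}|_{\PBbb^2}$ is moreover semistable of degree zero, so the identity reads $c_2(\widehat{\Ecal}|_{\PBbb^2})=k(\Ecal)$), while $\widehat{\Ecal}|_{\PBbb^2}^{**}\ncong\Ocal_{\PBbb^2}^{\oplus 2}$ is part of the hypothesis. For the converse I would start from the numerical identity together with the non-isomorphism assumption: by Corollary~\ref{computation} the identity already forces $\widehat{\Ecal}|_{\PBbb^2}$ to be semistable of degree zero, and by Proposition~\ref{BubbleCharaterization} it forces $\widehat{\Ecal}|_{\PBbb^1}\cong\Ocal_{\PBbb^1}^{\oplus 2}$; combined with the non-isomorphism assumption, these are precisely the two defining properties of a bubble.

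I do not expect any genuine obstacle here, since all the substance is already contained in Proposition~\ref{BubbleCharaterization} (built on Lemma~\ref{Thm1.5} and Corollary~\ref{computation}). The only points that require attention are bookkeeping ones: keeping the normalization $\Delta=4c_2-c_1^2$ consistent with the bare Chern number $c_2$ when passing between the two forms of the numerical identity, and observing that any reflexive extension realizing that identity is automatically the normalized semistable extension, so that the notion of bubble used here is compatible with the uniqueness statement of Proposition~\ref{BubbleUniqueness}.
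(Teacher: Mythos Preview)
Your proposal is correct and matches the paper's approach exactly: the paper states this corollary immediately after Proposition~\ref{BubbleCharaterization} without a separate proof, treating it as a direct consequence of that proposition combined with the definition of a bubble. Your careful unwinding of the two directions, including the observation (via Corollary~\ref{computation}) that the discriminant identity forces semistability of degree zero, is precisely the implicit reasoning the paper relies on.
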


This finishes the proof of (3) of Theorem \ref{Thm1.1}. Now we look at the relation between the semistable extension and the generic multiplicity $k^g(\Ecal)$ as mentioned in the beginning of this section.
\begin{prop}\label{Cor2.11}
$k^g(\Ecal)$ is well-defined.  The normalized semistable extension $\widehat{\Ecal}$ of $\Ecal$ satisfies
$$
\Delta(\widehat{\Ecal}|_{\PBbb^2})=k^g(\Ecal) \leq k(\Ecal).
$$
In particular, for any extension $\widehat{\Ecal}$, it satisfies
$$
\Delta(\widehat{\Ecal}|_{\PBbb^2})\leq k(\Ecal)
$$
where if the equality holds, then 
$$
\widehat{\Ecal}|_{\PBbb^1}\cong\Ocal_{\PBbb^1}^{\oplus 2}.
$$
\end{prop}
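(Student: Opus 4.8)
The plan is to exploit that the blow-up $p\colon\widehat{B}\to B$ and the normalized semistable extension $\widehat{\Ecal}$ are intrinsic to $\Ecal$ and do not depend on the chosen splitting $\CBbb^3\cong\CBbb^2\times\CBbb$; what depends on the plane $\Pi\ni 0$ is only the strict transform $\widehat{B_0}$ of the central disk $B_0=B\cap\Pi$ and the line $\PBbb^1=\PBbb^2\cap\widehat{B_0}=\PBbb(\Pi)$, and as $\Pi$ ranges over the $2$-planes through the origin, $\PBbb(\Pi)$ ranges over \emph{every} line of the exceptional divisor $\PBbb^2$. I write $k_\Pi(\Ecal)$ for the multiplicity computed with central plane $\Pi$, so that $k^g(\Ecal)=\min_\Pi k_\Pi(\Ecal)$ by definition. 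The target is to show this minimum is attained and equals $c_2(\widehat{\Ecal}|_{\PBbb^2})$ for the normalized semistable extension; recall $\Delta_\Ecal=4c_2(\widehat{\Ecal}|_{\PBbb^2})-c_1(\widehat{\Ecal}|_{\PBbb^2})^2$ with $c_1\in\{0,-1\}$.

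First I would prove the lower bound $k_\Pi(\Ecal)\ge c_2(\widehat{\Ecal}|_{\PBbb^2})$ for \emph{every} plane $\Pi$. Applying Corollary \ref{computation} with central plane $\Pi$ to the (plane-independent) normalized semistable extension gives
$$
\dim_\CBbb\bigl(p_*(\widehat{\Ecal}|_{\widehat{B_0}})/\Ecal|_{B_0}\bigr)_0\;=\;c_2(\widehat{\Ecal}|_{\PBbb^2})-\Xcal'\bigl(R^\bullet p_*(\widehat{\Ecal}|_{\widehat{B_0}})\bigr)\;\le\;k_\Pi(\Ecal);
$$
since the fibers of $p|_{\widehat{B_0}}$ over $B_0$ have dimension at most one, $R^ip_*(\widehat{\Ecal}|_{\widehat{B_0}})=0$ for $i\ge 2$, so $-\Xcal'(R^\bullet p_*(\widehat{\Ecal}|_{\widehat{B_0}}))=\dim_\CBbb R^1p_*(\widehat{\Ecal}|_{\widehat{B_0}})_0\ge 0$ and the claimed bound follows; hence $c_2(\widehat{\Ecal}|_{\PBbb^2})\le\min_\Pi k_\Pi(\Ecal)$. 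Next I would show this bound is attained for a general plane. By the Grauert--Mülich theorem the semistable bundle $\widehat{\Ecal}|_{\PBbb^2}$ (which is moreover stable when $c_1=-1$, a strictly semistable rank two bundle with odd first Chern class being impossible) restricts to a general line $\ell\subset\PBbb^2$ as $\Ocal_\ell^{\oplus 2}$ when $c_1=0$, and as $\Ocal_\ell\oplus\Ocal_\ell(-1)$ when $c_1=-1$. Choosing $\Pi$ so that $\PBbb(\Pi)$ is such a general line, and applying Lemma \ref{Thm1.5} when $c_1=0$ (so $k_\Pi(\Ecal)=\Delta_\Ecal/4=c_2(\widehat{\Ecal}|_{\PBbb^2})$) or the first assertion of Corollary \ref{computation} when $c_1=-1$ (so $k_\Pi(\Ecal)=c_2(\widehat{\Ecal}|_{\PBbb^2})$), yields $k_\Pi(\Ecal)=c_2(\widehat{\Ecal}|_{\PBbb^2})$ in either case. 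Combined with the lower bound, this shows $k^g(\Ecal)$ is well-defined, equals $c_2(\widehat{\Ecal}|_{\PBbb^2})$, and is attained by every plane $\Pi$ whose projectivization is not a jumping line of $\widehat{\Ecal}|_{\PBbb^2}$, hence on a dense open set of planes; the upper semicontinuity of $\Pi\mapsto k_\Pi(\Ecal)$ recorded in the remark above then follows from a standard semicontinuity argument over the (compact) family of central planes, the minimum being the generic value.

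For the ``in particular'' clause, let $\widehat{\Ecal}$ now denote an arbitrary reflexive extension of $\Ecal$. By Proposition \ref{Prop2.6} we have $\Delta(\widehat{\Ecal}|_{\PBbb^2})\le\Delta_\Ecal$, and by the first part $\Delta_\Ecal=4k^g(\Ecal)-c_1^2\le 4k^g(\Ecal)\le 4k(\Ecal)$, where $c_1$ is the degree of the normalized semistable extension; hence $\Delta(\widehat{\Ecal}|_{\PBbb^2})\le 4k(\Ecal)$, and if equality $\Delta(\widehat{\Ecal}|_{\PBbb^2})=4k(\Ecal)$ holds then Proposition \ref{BubbleCharaterization} forces $\widehat{\Ecal}|_{\PBbb^1}\cong\Ocal_{\PBbb^1}^{\oplus 2}$. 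The step I expect to be the real obstacle is not a single computation but the care needed to run the ``fixed-splitting'' results, Lemma \ref{Thm1.5} and Corollary \ref{computation}, over a \emph{varying} central plane: one must pin down that their only $\Pi$-dependence enters through $\widehat{B_0}$ and $\PBbb^1$, and isolate the correct genericity input (that the general line of $\PBbb^2$ is not a jumping line of $\widehat{\Ecal}|_{\PBbb^2}$). If one also wants the full upper-semicontinuity statement, the extra delicate point is to make precise how the torsion-free sheaf $\Ecal|_{B_0}$ and its reflexive hull vary with $\Pi$, since forming the reflexive hull does not commute with restriction.
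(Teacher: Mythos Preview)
Your proof is correct, and it takes a somewhat different route to the key identity $k^g(\Ecal)=c_2(\widehat{\Ecal}|_{\PBbb^2})$ than the paper does.

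The paper proceeds as follows: it first uses Grauert--M\"ulich exactly as you do to identify $c_2(\widehat{\Ecal}|_{\PBbb^2})$ with $l(H)$ for a \emph{generic} line $H$, and then spends the bulk of the argument proving directly that $H\mapsto l(H)$ is upper semicontinuous, via an incidence-variety construction over $(\PBbb^2)^*$. Semicontinuity then gives $c_2=l(H)_{\mathrm{gen}}\le k(\Ecal)$ for the fixed plane. By contrast, you prove the uniform lower bound $c_2(\widehat{\Ecal}|_{\PBbb^2})\le k_\Pi(\Ecal)$ for \emph{every} plane $\Pi$, by applying Corollary~\ref{computation} together with the observation that $R^ip_*(\widehat{\Ecal}|_{\widehat{B_0}})=0$ for $i\ge 2$ (so that $-\Xcal'(R^\bullet p_*(\widehat{\Ecal}|_{\widehat{B_0}}))\ge 0$). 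Combined with the Grauert--M\"ulich upper bound at a generic plane, this pins down $k^g(\Ecal)=c_2$ without any semicontinuity argument. This is cleaner for the proposition as stated; the price is that you only gesture at the upper semicontinuity recorded in the remark preceding the proposition, whereas the paper's incidence-variety argument proves it in full. Your identification of the delicate step---that Lemma~\ref{Thm1.5} and Corollary~\ref{computation} depend on the choice of central plane only through $\widehat{B_0}$ and $\PBbb^1$, while the normalized semistable extension is plane-independent---is exactly the point that makes your lower bound valid.

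For the ``in particular'' clause your argument matches the paper's: Proposition~\ref{Prop2.6} bounds the discriminant of an arbitrary extension by $\Delta_\Ecal$, the first part gives $\Delta_\Ecal\le 4k(\Ecal)$, and equality forces triviality along $\PBbb^1$ by Proposition~\ref{BubbleCharaterization}. (Note the statement as written drops a factor of $4$; your normalization $\Delta_\Ecal=4k^g(\Ecal)-c_1^2$ with $c_1\in\{0,-1\}$ is the correct one, consistent with Theorem~\ref{Thm1.1}(2).)
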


\begin{proof}
By a theorem of Grauert–M\"ulich (\cite[Page 104]{OSS11}), for a generic line $H$ in $\PBbb^2$, either one of the following holds
$$
\widehat{\Ecal}|_{H}=\Ocal_H \oplus \Ocal_H
$$  
when $c_1(\widehat{\Ecal}|_{\PBbb^2})=0$; 
$$
\widehat{\Ecal}|_{H}=\Ocal_H \oplus \Ocal_H(-1)
$$  
when $c_1(\widehat{\Ecal}|_{\PBbb^2})=-1$. Denote 
$$
B^H_0=\{z\in B: [z]\in H\}.
$$ 
Then by Proposition \ref{Thm1.5},  
$$
c_2(\widehat{\Ecal}|_{\PBbb^2})=\text{length}((\Ecal|_{B^H_0})^{**}/\Ecal|_{B^H_0}).
$$
It suffices to show that the function 
$$
H \mapsto l(H)=\dim_{\CBbb}((\Ecal|_{B^H_0})^{**}/\Ecal|_{B^H_0})
$$ 
is an upper semi-continuous function of $H \in (\PBbb^2)^*$ and generically a constant equal to $c_2(\widehat{\Ecal}|_{\PBbb^2})$. Indeed, this implies that for generic $H\in (\PBbb^2)^*$
$$
c_2(\widehat{\Ecal}|_{\PBbb^2})=l(H) \leq k(\Ecal).
$$
To prove the semicontinuity, consider $\tilde{\Ecal}$ over the incidence variety 
$$Z=\{(x, H)\in B \times (\PBbb^2)^*: x\in B^H_0 \}$$ 
and let $\pi_1: Z \rightarrow B, \pi_2: Z \rightarrow (\PBbb^2)^*$ denote the two natural projections. Consider $$
\tilde{\Ecal}=\pi_1^* \Ecal.
$$
Then $\tilde{\Ecal}$ is torsion free since $\tilde{\Ecal}|_{\pi_2^{-1}(H)}$ is canonically isomorphic to $\Ecal|_{B_0^H}$ for any $H$ which is torsion free. Consider 
$$
0\rightarrow \tilde{\Ecal} \rightarrow \tilde{\Ecal}^{**} \rightarrow \tau \rightarrow 0
$$
where 
$$\text{Supp}(\tau)\subset\{0\} \times (\PBbb^2)^*$$ 
while 
$$\text{Sing}(\tilde{\Ecal}^{**}) \subset \{0\}\times (\PBbb^2)^*$$ 
has codimension at least one since $\tilde{\Ecal}^{**}$ is reflexive and its singular set has codimension at least three. In particular, for $H\notin \pi_2(\text{Sing}(\tilde{\Ecal}^{**}))$, $\tilde{\Ecal}^{**}|_{B_0^H}$ is locally free and 
$$
l(H)=\dim_{\CBbb}\tau|_{B_0^H}.
$$
In particular, $\dim (\tau|_{B_0^H})$ is a generic constant of $H$. To show the upper semi-continuity, fix any $H_0\in (\PBbb^2)^{*}$, and pick a smooth curve $C$ in $(\PBbb^2)^*$ passing through $H_0$. We can repeat the construction above to get a family of torsion free sheaves $\tilde{\Ecal}'$ over $Z_C=\pi_2^{-1}(C) \subset Z$. Then for any $H\in C$, $(\Ecal_C)^{**}|_{\pi_2^{-1}(H)}$ is torsion free. Thus by repeating the argument above, denote $\tau'=(\Ecal_C)^{**}/\Ecal_C$
$$
\text{length}(\tau'|_{B_0^H})=\text{length}(\Ecal'^{**}|_{B_0^H} / \Ecal'|_{B_0^H}) \leq \text{length}(\Ecal'|_{B_0^H} ^{**} / \Ecal'|_{B_0^H})
$$
where the second equality follows from that $\Ecal'^{**}|_H$ is torsion free. The conclusion follows from that $\text{length}(\tau'|_{B_0^H})$ is an upper semicontinuous function of $H$.
\end{proof}

\begin{rmk}\label{G4}
For general ranks, we know 
$$
c_2(\underline{\widehat{\Ecal}}|_{\PBbb^2}) \leq k^g(\Ecal)
$$
which directly follows from Corollary \ref{computation} that can be easily adapted to general ranks.
\end{rmk}

This finishes the proof of (2) of Theorem \ref{Thm1.1}. 

\section{Formation of singularity of the type \texorpdfstring{$\Ocal \oplus \Ical$}{Ocal + Ical}}
In this section, we study the formation of singularities of the type $\Ocal_{B_0} \oplus \Ical$ where $\Ical$ is an ideal defining $0\in B_0$ with multiplicities. We will prove Theorem \ref{Main}.

\subsection{Proof for Part (1) and (2) Theorem \ref{Main}-Existence of cone families}
We fix a family $\Ecal$ satisfying 
$$
\Ecal|_{B_0}\cong \Ical \oplus \Ocal_{B_0}.
$$
Then we define 
$$
0\rightarrow \Ecal_1 \rightarrow \Ecal\rightarrow \iota_* \Ical \rightarrow 0.
$$
We first note the modification does not change the singularity formation.
\begin{lem}
    $\Ecal_1|_{B_0} \cong \Ical \oplus \Ocal_{B_0}$.
\end{lem}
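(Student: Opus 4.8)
The plan is to restrict the defining sequence of $\Ecal_1$ to $B_0$ and analyze the resulting Tor exact sequence. First I would tensor
$$0\to\Ecal_1\to\Ecal\to\iota_*\Ical\to0$$
over $\Ocal_B$ with $\Ocal_{B_0}$. Since $B_0$ is cut out in $B$ by the single function $z$, I would use the Koszul resolution $0\to\Ocal_B\xrightarrow{\,z\,}\Ocal_B\to\Ocal_{B_0}\to0$ to compute the relevant higher Tor sheaves: $\Tor_1^{\Ocal_B}(\Ecal,\Ocal_{B_0})=\ker(z\colon\Ecal\to\Ecal)=0$, because $\Ecal$ is reflexive on the smooth threefold $B$ and hence torsion-free, and $\Tor_1^{\Ocal_B}(\iota_*\Ical,\Ocal_{B_0})\cong\Ical$ as a sheaf on $B_0$, because $z$ annihilates $\iota_*\Ical$ (equivalently, the conormal sheaf of $B_0$ in $B$ is trivial).

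These computations turn the long exact Tor sequence into the four-term exact sequence
$$0\to\Ical\xrightarrow{\,\alpha\,}\Ecal_1|_{B_0}\xrightarrow{\,\beta\,}\Ecal|_{B_0}\xrightarrow{\,\pi\,}\Ical\to0$$
on $B_0$, where $\pi$ is precisely the projection $\Ecal|_{B_0}\cong\Ical\oplus\Ocal_{B_0}\to\Ical$ used to define $\Ecal_1$ (one checks that the surjection $\Ecal\to\iota_*\Ical$ factors through $\Ecal|_{B_0}$ and is unchanged upon applying $\otimes_{\Ocal_B}\Ocal_{B_0}$). Reading off $\ker\pi=\Ocal_{B_0}$ and splitting the four-term sequence at the middle, I obtain the short exact sequence
$$0\to\Ical\to\Ecal_1|_{B_0}\to\Ocal_{B_0}\to0,$$
which splits because $\Ocal_{B_0}$ is free. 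Hence $\Ecal_1|_{B_0}\cong\Ical\oplus\Ocal_{B_0}$, and in particular $\Ecal_1$ forms the same singularity as $\Ecal$, with $k(\Ecal_1)=k(\Ecal)$.

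The only delicate point is the bookkeeping in the Tor computation, i.e.\ checking that $\Tor_1^{\Ocal_B}(\Ecal,\Ocal_{B_0})$ vanishes (which reduces to torsion-freeness of the reflexive sheaf $\Ecal$) and that the connecting map $\alpha$ is injective so that the splitting of the final short exact sequence is unobstructed; everything else is formal homological algebra. I do not expect a serious obstacle here: this lemma is a warm-up confirming that the elementary modification preserves the type of the singularity, and the genuine difficulties toward Theorem~\ref{Main} arise only in the later steps.
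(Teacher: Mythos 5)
Your proof is correct and takes essentially the same route as the paper, which simply asserts the restricted sequence $0\to\Ical\to\Ecal_1|_{B_0}\to\Ocal_{B_0}\to0$ (your Tor computation supplies the derivation the paper omits) and then splits it. The one caveat is that the splitting is not a consequence of freeness of $\Ocal_{B_0}$ alone (compare the Euler sequence on $\PBbb^1$, whose quotient is trivial yet which does not split); one needs $\Ext^1(\Ocal_{B_0},\Ical)=H^1(B_0,\Ical)=0$, which holds because $B_0$ is a Stein ball --- exactly the justification the paper gives.
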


\begin{proof}
We have the following exact sequence
    $$
    0\rightarrow \Ical \rightarrow \Ecal_1|_{B_0} \rightarrow \Ocal_{B_0} \rightarrow 0.
    $$
Since $\Ocal_{B_0}$ is locally free, the sequence above near $0$ is classified by 
$$
\Ext^1(\Ocal_{B_0}, \Ical)=H^1(B_0, \Ical)=0.
$$ 
Thus it splits, i.e.,  
$$
\Ecal_1|_{B_0}\cong \Ocal_{B_0} \oplus \Ical.
$$
\end{proof}
Given this, by induction, we can define $\Ecal_k$ inductively as
$$
0\rightarrow \Ecal_k \rightarrow \Ecal_{k-1} \rightarrow \iota_* \Ical \rightarrow 0.
$$
which satisfies 
$$
\Ecal_k|_{B_0}\cong \Ocal_{B_0} \oplus \Ical.
$$
Below we also denote $\widehat{\Ecal_k}$ as the normalized semistable extension of $\Ecal_k$, i.e., $\widehat{\Ecal_k}|_{\PBbb^2}$ is semistable with 
$$c_1(\widehat{\Ecal_k}|_{\PBbb^2})\in \{0,-1\}$$ 
and 
$$
p_*\widehat{\Ecal_k}\cong \Ecal_k.
$$ 
Fix $s_k$ as any extension over $B$ of the section of $\Ecal|_{B_0}$ corresponding to the $\Ocal_{B_0}$ factor. 
\begin{lem}
    $\Ecal_k$ lies in an extension as 
    $$
    0\rightarrow \Ocal_{B} \xrightarrow{s_k} \Ecal_k \rightarrow \Qcal_k \rightarrow 0
    $$
    where $\Qcal_k$ is a torsion free sheaf with $\Qcal_k|_{B_0}\cong\Ical$. In particular, for $z\neq 0$ and $|z|$ small,  $\Qcal_k|_{B_z}$ has isolated singularities and 
    $$
    l(\Qcal_k|_{B_z})=k(\Ecal).
    $$
\end{lem}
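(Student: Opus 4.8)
The plan is to produce the section $s_k$, set $\Qcal_k:=\coker(s_k)$, and then read the properties of the fibres $\Qcal_k|_{B_z}$ off the reflexive hull $\Qcal_k^{**}$ together with a flatness statement over the $z$-axis. To construct $s_k$: the splitting $\Ecal_k|_{B_0}\cong\Ocal_{B_0}\oplus\Ical$ gives the section $\sigma\in H^0(B_0,\Ecal_k|_{B_0})$ that is the inclusion of the $\Ocal_{B_0}$-summand, a split monomorphism with cokernel $\Ical$. Since $\Ecal_k$ is torsion free, $z$ is a nonzerodivisor on it, so $0\to\Ecal_k\xrightarrow{z}\Ecal_k\to\Ecal_k|_{B_0}\to0$ is exact; as $B$ is Stein, $H^1(B,\Ecal_k)=0$, hence $\sigma$ lifts to $s_k\colon\Ocal_B\to\Ecal_k$ with $s_k|_{B_0}=\sigma$ (this $s_k$ is injective, being nonzero with torsion free target, and is compatible with the section fixed earlier because the $\Ocal_{B_0}$-summand of $\Ecal_j|_{B_0}$ is the one inherited from $\Ecal_{j-1}|_{B_0}$). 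Tensoring $0\to\Ocal_B\xrightarrow{s_k}\Ecal_k\to\Qcal_k\to0$ with $\Ocal_{B_0}$ and observing that the connecting map $\Tor_1(\Qcal_k,\Ocal_{B_0})\to\Ocal_{B_0}$ has image $\ker(\sigma)=0$ yields $0\to\Ocal_{B_0}\xrightarrow{\sigma}\Ecal_k|_{B_0}\to\Qcal_k|_{B_0}\to0$, so $\Qcal_k|_{B_0}\cong\Ical$.

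Next, torsion freeness of $\Qcal_k$. Let $\tau\subset\Qcal_k$ be the torsion subsheaf. Restricting $0\to\tau\to\Qcal_k\to\Qcal_k/\tau\to0$ to $B_0$ --- the $\Tor_1$ term vanishes because $\Qcal_k/\tau$ is $z$-torsion free --- produces an injection $\tau\otimes\Ocal_{B_0}\hookrightarrow\Qcal_k|_{B_0}=\Ical$. As $\tau$ is supported in positive codimension while $\Qcal_k/\tau$ has rank one, $\tau\otimes\Ocal_{B_0}$ has rank zero over $B_0$, so it is a torsion subsheaf of the torsion free $\Ical$ and therefore $0$; by Nakayama $\operatorname{Supp}(\tau)\cap B_0=\emptyset$, and since $\operatorname{Supp}(\tau)$ is closed and only the germ at $0$ matters, after shrinking $B$ we have $\tau=0$.

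For the final assertions I would first study $\Qcal_k^{**}$. It is invertible (rank one reflexive on the smooth threefold $B$, hence trivial since $B$ is a ball), and $T:=\Qcal_k^{**}/\Qcal_k$ is supported in codimension $\ge 2$. Tensoring $0\to\Qcal_k\to\Qcal_k^{**}\to T\to0$ with $\Ocal_{B_0}$ identifies $\Tor_1(T,\Ocal_{B_0})$ with $\ker(\Qcal_k|_{B_0}=\Ical\to\Qcal_k^{**}|_{B_0})$, a torsion free sheaf on $B_0$ supported in positive codimension, hence $0$; thus $z$ is a nonzerodivisor on $T$ and $0\to\Ical\to\Qcal_k^{**}|_{B_0}\to T|_{B_0}\to0$, giving $\operatorname{length}(T|_{B_0})=\operatorname{length}(\Ocal_{B_0}/\Ical)=k(\Ecal)$. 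Since $T$ is supported on a set finite over the $z$-disk near $0$ and has no $z$-torsion, it is flat over that disk. Now for $z\ne0$ with $|z|$ small: $\Ecal_k|_{B_z}=\Ecal|_{B_z}$ is locally free of rank two (as $\Ecal_k$ agrees with $\Ecal$ off $B_0$ and $\operatorname{Sing}(\Ecal)\subseteq\{0\}$), and restricting $0\to\Qcal_k\to\Qcal_k^{**}\to T\to0$ to $B_z$ (the $\Tor_1$ vanishing by flatness of $T$) gives $0\to\Qcal_k|_{B_z}\to\Qcal_k^{**}|_{B_z}\to T|_{B_z}\to0$ with $\Qcal_k^{**}|_{B_z}$ a line bundle; hence $\Qcal_k|_{B_z}$ is torsion free of rank one, locally free away from the finite support of $T|_{B_z}$ --- isolated singularities --- and $l(\Qcal_k|_{B_z})=\operatorname{length}(T|_{B_z})=\operatorname{length}(T|_{B_0})=k(\Ecal)$ by flatness.

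The main obstacle is the last equality $l(\Qcal_k|_{B_z})=k(\Ecal)$: upper semicontinuity of $z\mapsto l(\Qcal_k|_{B_z})$, argued as in the proof of Proposition~\ref{Cor2.11}, only gives $l(\Qcal_k|_{B_z})\le k(\Ecal)$, and the equality genuinely rests on the flatness over the $z$-axis of the colength sheaf $T=\Qcal_k^{**}/\Qcal_k$, equivalently on the absence of $z$-torsion in $T$, which I reduced to the injectivity of the restricted map $\Ical=\Qcal_k|_{B_0}\to\Qcal_k^{**}|_{B_0}$; establishing this cleanly, and ruling out embedded phenomena on $B_0$, is where the care goes.
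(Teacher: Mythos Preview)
Your proof is correct and follows the same strategy as the paper: both deduce the length equality from flatness of $T=\Qcal_k^{**}/\Qcal_k$ over the $z$-disk, obtained by observing that $\Tor_1(T,\Ocal_{B_{z_0}})$ injects into the torsion free sheaf $\Qcal_k|_{B_{z_0}}$ while being supported in positive codimension, hence vanishes. The only minor variation is that the paper verifies this $\Tor_1$-vanishing at every slice $z_0$ (using the Koszul complex for $s_k|_{B_{z_0}}$ on the locally free $\Ecal_k|_{B_{z_0}}$), whereas you check it only at $z_0=0$ and then shrink so that any residual torsion in the pushforward would have to sit over $0$; your separate Nakayama argument for torsion freeness of $\Qcal_k$ on $B$ is also more explicit than what the paper writes.
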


\begin{proof}
    By definition, we know $\Qcal_k|_{B_0}\cong\Ical$. Since $\Ical$ is torsion free with the non-locally free locus supported at the origin, the non-locally free locus of $\Qcal_k|_{B_z}$ must be isolated. By definition, we know 
    $$
    0\rightarrow \Ocal_{B_z} \rightarrow \Ecal_k|_{B_z} \rightarrow \Qcal_k|_{B_z} \rightarrow 0
    $$ which implies that $\Qcal_k|_{B_z}$ must be torsion free since it is part of the Koszul complex associated to the section $s_k|_{B_z}$. The conclusion about the length is a well-known fact since this implies the quotient is flat over the parameter space for $z$. We include a short proof for completeness. Indeed, write 
    $$
    0\rightarrow Q_k \rightarrow \Qcal_k^{**} \rightarrow \tau \rightarrow 0
    $$
    and we need to show that $\tau$ is flat over the parameter space for $z$. Restricting to any $z=z_0$ slice, we have the following exact sequence
    $$0\rightarrow \Tor_1(\Ocal_{B_{z_0}}, \tau)\rightarrow \Qcal_k|_{B_{z_0}}$$
    which implies 
    $$
    \Tor_1(\Ocal_{B_{z_0}},\tau)=0.
    $$
    By definition, this means $\tau$ has no torsion killed by $z-z_0$ for any $z_0$. In particular, $\tau$ is torsion free thus it is flat over the parameter space for $z$.
\end{proof}

Since $p_*\widehat{\Ecal_k}\cong \Ecal_k$, $s_k$ gives rise to a section $\widehat{s_k}$ of $\widehat{\Ecal_k}$ through the following tautological map $$
p^*p_*\widehat{\Ecal_k}=p^*\Ecal_k \rightarrow \widehat{\Ecal_k}.
$$ 
Then $\widehat{s_k}|_{\widehat{B_0}}$ gives a rank one subsheaf of $\widehat{\Ecal_k}|_{\widehat{B_0}}$. Denote $m_k$ as the vanishing order of $\widehat{s_k}|_{B_0}$ along $\PBbb^1$. We have the following natural short exact sequence
$$
0\rightarrow \Ocal_{\widehat{B_0}}(m_k)  \rightarrow \widehat{\Ecal_k}|_{\widehat{B_0}} \rightarrow \underline{\Qcal}_k \rightarrow 0.
$$

\begin{lem}\label{Lemma3.3}
$\underline{\Qcal}_k$ is torsion free over $\widehat{B_0}$ a 
and 
$$
c_1(\underline{\Qcal}_k|_{\PBbb^1}) \geq c_1(\widehat{\Ecal_k}|_{\PBbb^2})+m_k.
$$
where if $c_1(\widehat{\Ecal_k}|_{\PBbb^2})=-1$, then $m_k>0$.
\end{lem}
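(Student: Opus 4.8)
The plan is to read off all three assertions from the structure of the sub-line-bundle $\Ocal_{\widehat{B_0}}(m_k)\hookrightarrow\widehat{\Ecal_k}|_{\widehat{B_0}}$, once I know it is saturated, together with the semistability of $\widehat{\Ecal_k}|_{\PBbb^2}$. First I would establish that $\widehat{\Ecal_k}|_{\widehat{B_0}}$ is torsion-free. Away from $\PBbb^1$ this is clear: there $\widehat{\Ecal_k}\cong p^*\Ecal_k$, and $\Ecal_k$ is locally free on $B\setminus\{0\}$, since an elementary modification along $B_0$ of a sheaf that is locally free near a point of $B_0\setminus\{0\}$, with locally free quotient supported on the smooth divisor $B_0$, is again locally free there. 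At the generic point of $\PBbb^1$ it is locally free because $\widehat{\Ecal_k}|_{\PBbb^2}$ is torsion-free of rank two on the smooth surface $\PBbb^2$, hence locally free off a finite set, and $\operatorname{pd}_{\Ocal_{\widehat B},q}\widehat{\Ecal_k}=\operatorname{pd}_{\Ocal_{\PBbb^2},q}\big(\widehat{\Ecal_k}|_{\PBbb^2}\big)$ for $q\in\PBbb^2$ by the change-of-rings formula (the equation of $\PBbb^2$ being a nonzerodivisor on $\widehat{\Ecal_k}$). Finally $\widehat{\Ecal_k}|_{\widehat{B_0}}$ has no zero-dimensional torsion: $\widehat{\Ecal_k}$ is reflexive on the smooth threefold $\widehat B$, hence has depth $\geq 2$ at every point, and restriction to the Cartier divisor $\widehat{B_0}$ (whose equation is a nonzerodivisor on $\widehat{\Ecal_k}$) drops depth by exactly one, so $\widehat{\Ecal_k}|_{\widehat{B_0}}$ has depth $\geq 1$ everywhere. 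Combining these, $\widehat{\Ecal_k}|_{\widehat{B_0}}$ is torsion-free.

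Granting this, $\Ocal_{\widehat{B_0}}(m_k)$ is the saturation inside $\widehat{\Ecal_k}|_{\widehat{B_0}}$ of the image of $\widehat{s_k}|_{\widehat{B_0}}$: away from $\PBbb^1$ that image is the saturated direct summand $\Ocal_{B_0}\subseteq\Ecal_k|_{B_0}$, so its saturation differs from it only over $\PBbb^1$ and is a line bundle $\Ocal_{\widehat{B_0}}(m'\PBbb^1)$; since dividing $\widehat{s_k}|_{\widehat{B_0}}$ by powers of $\PBbb^1$ stops exactly at $m_k$, we get $m'=m_k$. Hence $\underline{\Qcal}_k$, being a quotient by a saturated subsheaf of a torsion-free sheaf, is torsion-free. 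For the bound on $c_1$ I would restrict the sequence $0\to\Ocal_{\widehat{B_0}}(m_k)\to\widehat{\Ecal_k}|_{\widehat{B_0}}\to\underline{\Qcal}_k\to 0$ to $\PBbb^1$. Since $\Ocal_{\widehat{B_0}}(m_k)$ is a line bundle and the equation of $\PBbb^1$ is a nonzerodivisor on the torsion-free sheaf $\widehat{\Ecal_k}|_{\widehat{B_0}}$, the only surviving $\Tor$ term is $\Tor_1(\underline{\Qcal}_k,\Ocal_{\PBbb^1})$, a torsion sheaf on $\PBbb^1$, and using $\Ocal_{\widehat{B_0}}(m_k)|_{\PBbb^1}\cong\Ocal_{\PBbb^1}(-m_k)$ we get
$$
0\to\Tor_1(\underline{\Qcal}_k,\Ocal_{\PBbb^1})\to\Ocal_{\PBbb^1}(-m_k)\to\widehat{\Ecal_k}|_{\PBbb^1}\to\underline{\Qcal}_k|_{\PBbb^1}\to 0 .
$$
Taking Euler characteristics and using that Riemann–Roch on $\PBbb^2$ gives $\chi\big(\widehat{\Ecal_k}|_{\PBbb^1}\big)=c_1\big(\widehat{\Ecal_k}|_{\PBbb^2}\big)+2$ for the restriction to the line $\PBbb^1$, one obtains
$$
c_1\big(\underline{\Qcal}_k|_{\PBbb^1}\big)=c_1\big(\widehat{\Ecal_k}|_{\PBbb^2}\big)+m_k+\operatorname{length}\big(\Tor_1(\underline{\Qcal}_k,\Ocal_{\PBbb^1})\big)\ \geq\ c_1\big(\widehat{\Ecal_k}|_{\PBbb^2}\big)+m_k .
$$

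For the last clause, suppose $c_1\big(\widehat{\Ecal_k}|_{\PBbb^2}\big)=-1$ and $m_k=0$. Then $\widehat{s_k}|_{\widehat{B_0}}$ does not vanish along $\PBbb^1$, and since every point of $\PBbb^1$ lies on $\PBbb^2$, this forces $\widehat{s_k}$ not to vanish identically along $\PBbb^2$. Hence $\widehat{s_k}|_{\PBbb^2}\colon\Ocal_{\PBbb^2}\to\widehat{\Ecal_k}|_{\PBbb^2}$ is nonzero, hence injective, exhibiting a sub-line-bundle $\Ocal_{\PBbb^2}\hookrightarrow\widehat{\Ecal_k}|_{\PBbb^2}$ of slope $0$; this contradicts the semistability of $\widehat{\Ecal_k}|_{\PBbb^2}$, whose slope is $c_1\big(\widehat{\Ecal_k}|_{\PBbb^2}\big)/2=-\tfrac12<0$. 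Therefore $m_k>0$.

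The delicate step is the torsion-freeness of $\widehat{\Ecal_k}|_{\widehat{B_0}}$ — more precisely, excluding torsion supported along the curve $\PBbb^1$ — which is exactly why one must invoke the torsion-freeness (hence codimension-one local freeness) of $\widehat{\Ecal_k}|_{\PBbb^2}$ rather than a cruder pullback estimate, as pointed out in the introduction; once that is in hand, the two inequalities are formal consequences of the resulting short exact sequence on $\PBbb^1$ and of semistability.
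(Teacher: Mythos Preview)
Your overall strategy matches the paper's, and your explicit verification that $\widehat{\Ecal_k}|_{\widehat{B_0}}$ is torsion-free is a genuine addition: the paper's assertion that $\underline{\Qcal}_k$ embeds into $(\widehat{\Ecal_k}|_{\widehat{B_0}})^{**}/\Ocal_{\widehat{B_0}}(m_k)$ tacitly relies on exactly this fact. Your Euler-characteristic computation for the $c_1$ bound (which packages the paper's inequality $c_1(\widehat{\Ecal_k}|_{\PBbb^1})\geq c_1(\widehat{\Ecal_k}|_{\PBbb^2})$ into the formula $\chi(\widehat{\Ecal_k}|_{\PBbb^1})=c_1(\widehat{\Ecal_k}|_{\PBbb^2})+2$) and your treatment of the $c_1=-1$ clause are correct and essentially equivalent to the paper's.

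There is, however, a gap in your saturation step. You assert that the saturation of the image of $\widehat{s_k}|_{\widehat{B_0}}$ inside the torsion-free sheaf $\Fcal:=\widehat{\Ecal_k}|_{\widehat{B_0}}$ is automatically a \emph{line bundle} $\Ocal_{\widehat{B_0}}(m'\PBbb^1)$, on the grounds that it agrees with $\Ocal$ off $\PBbb^1$. But a rank-one torsion-free subsheaf of a torsion-free (not locally free) sheaf on a smooth surface that is trivial away from a curve need not be a line bundle: for instance on $\CBbb^2$ take $\Fcal=\Ocal\oplus\Ical_p$ with $p\in\{x=0\}$ and section $(0,x)$; the saturation of the image is $0\oplus\Ical_p\cong\Ical_p$, trivial off $\{x=0\}$ yet not locally free. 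So ``differs from $\Ocal$ only over $\PBbb^1$'' does not by itself force the saturation to be some $\Ocal(m')$, and you have not excluded zero-dimensional defects sitting on $\PBbb^1$. The paper handles precisely this point by passing to the double dual: the induced map $\Ocal_{\widehat{B_0}}(m_k)\to\Fcal^{**}$ lands in a rank-two \emph{vector bundle} and vanishes only at isolated points of $\PBbb^1$ (it is nowhere zero off $\PBbb^1$, being the summand $\Ocal_{B_0}$, and not identically zero along $\PBbb^1$ by the very definition of $m_k$), so its cokernel is torsion-free; combined with the torsion-freeness of $\Fcal$ that you established, one obtains $\underline{\Qcal}_k\hookrightarrow\Fcal^{**}/\Ocal_{\widehat{B_0}}(m_k)$ and hence $\underline{\Qcal}_k$ is torsion-free. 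With this observation substituted for the unproved line-bundle claim, your argument is complete.
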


\begin{proof}
By definition, the following induced map 
$$\Ocal_{\widehat{B_0}}(m_k)  \rightarrow \widehat{\Ecal_k}|_{\widehat{B_0}}^{**}$$
vanishes at isolated points on $\PBbb^1$, thus its quotient must be torsion free. By definition, $\underline{\Qcal}_k$ is then a subsheaf of $\widehat{\Ecal_k}|_{\widehat{B_0}}^{**}/\Ocal_{\widehat{B_0}}(m_k)$ which is torsion free. In particular, $\underline{\Qcal}_k$ is torsion free. For the statement about the slope, the defining exact sequence above restricts to 
$$
0\rightarrow \Ocal_{\PBbb^1}(-m_k)  \rightarrow \widehat{\Ecal_k}|_{\PBbb^1} \rightarrow \underline{\Qcal}_k|_{\PBbb^1} \rightarrow 0
$$
where the exactness of the first map follows from that the kernel of the map $\Ocal_{\PBbb^1}(-m_k)  \rightarrow \widehat{\Ecal_k}|_{\PBbb^1}$ is supported at points, thus has to be zero. Thus, 
$$
c_1(\underline{\Qcal}_k|_{\PBbb^1}) = c_1(\widehat{\Ecal_k}|_{\PBbb^1})+m_k.
$$
which combined with 
$$
c_1(\widehat{\Ecal_k}|_{\PBbb^1})\geq c_1(\widehat{\Ecal_k}|_{\PBbb^2}),
$$
implies 
$$
c_1(\underline{\Qcal}_k|_{\PBbb^1}) \geq c_1(\widehat{\Ecal_k}|_{\PBbb^2})+m_k.
$$
When $c_1(\widehat{\Ecal_k}|_{\PBbb^2})=-1$, $\widehat{\Ecal_k}|_{\PBbb^2}$ is stable, thus the nontrivial subsheaf $\Ocal_{\PBbb^2}(-m_k)$ of  $\widehat{\Ecal_k}|_{\PBbb^2}$ must satisfy $m_k\geq 1$. The conclusion follows.
\end{proof}

Then we define 
$$
0 \rightarrow \widehat{\Ecal_k}' \rightarrow \widehat{\Ecal_k} \rightarrow  \iota_* (\underline{\Qcal}_k) \rightarrow 0.
$$

\begin{cor}\label{Lem3.4}
   $\widehat{\Ecal_k}'$ is reflexive and $\widehat{s_k}$ is still a section of $\widehat{\Ecal_k}'$. In particular, $p_* \widehat{\Ecal_k}'\cong \Ecal_{k+1}$.
\end{cor}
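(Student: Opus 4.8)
The plan is to verify the three assertions in turn. For reflexivity of $\widehat{\Ecal_k}'$ I would argue pointwise on the regular threefold $\widehat{B}$, using the criterion that a coherent sheaf $\Fcal$ there is reflexive iff $\operatorname{depth}\Fcal_x\ge\min(2,\dim\Ocal_{\widehat{B},x})$ for every $x$. Away from $\widehat{B_0}$ this is clear, since there $\widehat{\Ecal_k}'=\widehat{\Ecal_k}$, which is reflexive; at $x\in\widehat{B_0}$ with $\dim\Ocal_{\widehat{B},x}\le1$ it reduces to torsion freeness, which holds because $\widehat{\Ecal_k}'\subseteq\widehat{\Ecal_k}$; and at $x\in\widehat{B_0}$ with $\dim\Ocal_{\widehat{B},x}\ge2$ the defining sequence $0\to\widehat{\Ecal_k}'\to\widehat{\Ecal_k}\to\iota_*\underline{\Qcal}_k\to0$ gives, via the depth estimate for short exact sequences, $\operatorname{depth}\widehat{\Ecal_k}'_x\ge\min\bigl(\operatorname{depth}\widehat{\Ecal_k}_x,\ \operatorname{depth}(\iota_*\underline{\Qcal}_k)_x+1\bigr)\ge2$: the first term is $\ge2$ by reflexivity of $\widehat{\Ecal_k}$, and the second is $\ge2$ because $\underline{\Qcal}_k$ is torsion free on $\widehat{B_0}$ (Lemma \ref{Lemma3.3}) and $\dim\Ocal_{\widehat{B_0},x}=\dim\Ocal_{\widehat{B},x}-1\ge1$. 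Hence $\widehat{\Ecal_k}'$ is reflexive.

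That $\widehat{s_k}$ is still a section of $\widehat{\Ecal_k}'$ is formal. The surjection $\widehat{\Ecal_k}\to\iota_*\underline{\Qcal}_k$ is the composite of the restriction $\widehat{\Ecal_k}\to\iota_*(\widehat{\Ecal_k}|_{\widehat{B_0}})$ with $\iota_*$ of the quotient map $\widehat{\Ecal_k}|_{\widehat{B_0}}\to\underline{\Qcal}_k$, whose kernel is $\Ocal_{\widehat{B_0}}(m_k)$, the saturated rank-one subsheaf spanned by $\widehat{s_k}|_{\widehat{B_0}}$. Thus the composite $\Ocal_{\widehat{B}}\xrightarrow{\widehat{s_k}}\widehat{\Ecal_k}\to\iota_*\underline{\Qcal}_k$ is $\iota_*$ of $\Ocal_{\widehat{B_0}}\xrightarrow{\widehat{s_k}|_{\widehat{B_0}}}\widehat{\Ecal_k}|_{\widehat{B_0}}\to\underline{\Qcal}_k$, which vanishes since $\widehat{s_k}|_{\widehat{B_0}}$ factors through $\Ocal_{\widehat{B_0}}(m_k)$; so $\widehat{s_k}$ factors through $\widehat{\Ecal_k}'=\ker(\widehat{\Ecal_k}\to\iota_*\underline{\Qcal}_k)$. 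Applying $p_*$ then shows $s_k=p_*\widehat{s_k}$ is a section of $p_*\widehat{\Ecal_k}'\subseteq p_*\widehat{\Ecal_k}=\Ecal_k$.

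For the last statement, apply $p_*$ to the defining sequence to realize $p_*\widehat{\Ecal_k}'$ as a subsheaf of $\Ecal_k$, and identify it with $\Ecal_{k+1}=\ker(\Ecal_k\to\iota_*\Ical)$ in two steps. (i) $\Ecal_{k+1}\subseteq p_*\widehat{\Ecal_k}'$: a local section $\sigma$ of $\Ecal_{k+1}$ near $0$ satisfies $\sigma|_{B_0}=f\,s_k|_{B_0}$ for some local function $f$, hence $\sigma=\widetilde f\,s_k+z\,\tau$ with $\widetilde f$ a lift of $f$ and $\tau\in\Ecal_k$; pushing $\sigma$ into $\widehat{\Ecal_k}$ through $p^*\Ecal_k\to\widehat{\Ecal_k}$ and restricting to $\widehat{B_0}$ kills the term $z\,\tau$ because $\widehat{B_0}$ occurs in $\Div(p^*z)$, so the image lies in $\operatorname{im}(\widehat{s_k}|_{\widehat{B_0}})\subseteq\Ocal_{\widehat{B_0}}(m_k)$, hence in $\widehat{\Ecal_k}'$, giving $\sigma\in p_*\widehat{\Ecal_k}'$. (ii) Over the open set $\widehat{B}\setminus\PBbb^2\cong B\setminus\{0\}$ the strict transform $\widehat{B_0}$ is just $B_0\setminus\{0\}$, $\Ocal_{\widehat{B_0}}(m_k)$ is the rank-one subsheaf of $\Ecal_k|_{B_0\setminus\{0\}}$ spanned by $s_k$, and $\underline{\Qcal}_k$ is $\Qcal_k|_{B_0\setminus\{0\}}\cong\Ical|_{B_0\setminus\{0\}}$, so there $\widehat{\Ecal_k}'$, and hence $p_*\widehat{\Ecal_k}'$, coincides with $\Ecal_{k+1}$. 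Combining (i) and (ii), $p_*\widehat{\Ecal_k}'/\Ecal_{k+1}$ is a subsheaf of $\Ecal_k/\Ecal_{k+1}\cong\iota_*\Ical$ supported at $\{0\}$; but $\Ical$ is torsion free on the surface $B_0$, so $\iota_*\Ical$ has no nonzero section supported at a point, whence this quotient vanishes and $p_*\widehat{\Ecal_k}'\cong\Ecal_{k+1}$.

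The steps that need genuine care are the reflexivity and, hand in hand with it, the compatibility at $0$ in the last step: $p_*$ does not commute with elementary modifications, so the blow-up could a priori create or destroy sections over $\PBbb^2$. The depth bound — which is exactly where torsion freeness of $\underline{\Qcal}_k$ from Lemma \ref{Lemma3.3} is used — and the purity of $\iota_*\Ical$ are what rule this out; the remainder is formal manipulation of exact sequences and adjunction.
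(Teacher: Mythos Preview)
Your proof is correct. The paper's argument is much terser: for reflexivity it simply quotes the standard fact that an elementary modification of a reflexive sheaf along a torsion-free quotient supported on a divisor is again reflexive (your depth computation is precisely the content of that lemma), and for $p_*\widehat{\Ecal_k}'\cong\Ecal_{k+1}$ it observes that the two sheaves agree away from the origin and are \emph{both reflexive} on $B$, hence coincide since $\{0\}$ has codimension three. Your route for this last step is genuinely different and in a sense more self-contained: rather than invoking reflexivity of $p_*\widehat{\Ecal_k}'$ on $B$ (which the paper asserts without comment, though it does follow, e.g.\ by pushing forward the defining sequence and noting that $(p|_{\widehat{B_0}})_*\underline{\Qcal}_k$ is torsion free on $B_0$), you exhibit the inclusion $\Ecal_{k+1}\subseteq p_*\widehat{\Ecal_k}'$ directly via the section-chasing in step (i), and then kill the quotient by embedding it in $\iota_*\Ical$ and using that $\Ical$ has no sections supported at a point. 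This buys you independence from the reflexivity of the pushforward, at the cost of the explicit computation in (i); the paper's version is shorter but leans on a property of $p_*$ that is not entirely formal.
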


\begin{proof}
    Since $\underline{\Qcal}_k$ is torsion free, the elementary modification must be reflexive and it follows from definition that $\widehat{s_k}$ is still a section of $\widehat{\Ecal_k}'$.  Also by definition, $p_*(\widehat{\Ecal_k}')$ is isomorphic to $\Ecal_{k+1}$ away from $0$. Since both sheaves are reflexive, they must be isomorphic. 
\end{proof}

\begin{lem}\label{Lemma3.5}
    $\widehat{\Ecal_k}'|_{\PBbb^2}$ lies in the following exact sequence
    $$
0 \rightarrow \widehat{\Ecal_k}'|_{\PBbb^2} \rightarrow \widehat{\Ecal_k}|_{\PBbb^2} \rightarrow  \iota_* (\underline{\Qcal}_k|_{\PBbb^1} ) \rightarrow 0.
$$
In particular, 
$$
\Delta(\widehat{\Ecal_k}'|_{\PBbb^2})\geq \Delta_{\Ecal_{k+1}}-1.$$
\end{lem}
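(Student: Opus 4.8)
The plan is in two steps: first establish the displayed exact sequence by restricting the defining sequence of $\widehat{\Ecal_k}'$ to $\PBbb^2$, then read off the discriminant bound from Proposition \ref{Prop2.6} together with a short analysis of how far $\widehat{\Ecal_k}'|_{\PBbb^2}$ can be from semistable.

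\emph{Step 1: the exact sequence.} Apply $-\otimes_{\Ocal_{\widehat B}}\Ocal_{\PBbb^2}$ to the defining sequence $0\to\widehat{\Ecal_k}'\to\widehat{\Ecal_k}\to\iota_*\underline{\Qcal}_k\to 0$. Using the Koszul resolution $0\to\Ocal_{\widehat B}(-\PBbb^2)\to\Ocal_{\widehat B}\to\Ocal_{\PBbb^2}\to 0$, the only point to check is that $\Tor_1^{\Ocal_{\widehat B}}(\iota_*\underline{\Qcal}_k,\Ocal_{\PBbb^2})=0$, i.e.\ that multiplication by a local defining equation of $\PBbb^2$ is injective on $\iota_*\underline{\Qcal}_k$. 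Since $\underline{\Qcal}_k$ is supported on $\widehat{B_0}$, this is the same as multiplication by a local equation of the Cartier divisor $\PBbb^1=\PBbb^2\cap\widehat{B_0}$ on $\underline{\Qcal}_k$; as $\underline{\Qcal}_k$ is torsion free on the integral smooth surface $\widehat{B_0}$ (Lemma \ref{Lemma3.3}) and that equation is a nonzerodivisor there, the map is injective. Hence the restricted sequence is $0\to\widehat{\Ecal_k}'|_{\PBbb^2}\to\widehat{\Ecal_k}|_{\PBbb^2}\to\iota_*(\underline{\Qcal}_k|_{\PBbb^1})\to 0$, as claimed.

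\emph{Step 2: the discriminant bound.} By Corollary \ref{Lem3.4}, $\widehat{\Ecal_k}'$ is a reflexive extension of $\Ecal_{k+1}$, so Proposition \ref{Prop2.6} already gives $\Delta(\widehat{\Ecal_k}'|_{\PBbb^2})\le\Delta_{\Ecal_{k+1}}$, and it remains to show $\Delta(\widehat{\Ecal_k}'|_{\PBbb^2})\ge\Delta_{\Ecal_{k+1}}-1$. From Step 1, $c_1(\widehat{\Ecal_k}'|_{\PBbb^2})=c_1(\widehat{\Ecal_k}|_{\PBbb^2})-1$ (the quotient $\underline{\Qcal}_k|_{\PBbb^1}$ has generic rank one on the line $\PBbb^1$, being the restriction of a rank-one torsion free sheaf on $\widehat{B_0}$), and every rank-one subsheaf of $\widehat{\Ecal_k}'|_{\PBbb^2}$ embeds into the semistable sheaf $\widehat{\Ecal_k}|_{\PBbb^2}$, so $\mu_{\max}(\widehat{\Ecal_k}'|_{\PBbb^2})\le\mu(\widehat{\Ecal_k}|_{\PBbb^2})=\mu(\widehat{\Ecal_k}'|_{\PBbb^2})+\tfrac12$. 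If $c_1(\widehat{\Ecal_k}|_{\PBbb^2})=-1$ this slope bound forces $\widehat{\Ecal_k}'|_{\PBbb^2}$ to be semistable of degree $-2$ (any saturated sub-line-bundle has degree $\le -1$), so $\widehat{\Ecal_k}'$ is a twist of $\widehat{\Ecal_{k+1}}$ and $\Delta(\widehat{\Ecal_k}'|_{\PBbb^2})=\Delta_{\Ecal_{k+1}}$. If $c_1(\widehat{\Ecal_k}|_{\PBbb^2})=0$, then either $\widehat{\Ecal_k}'|_{\PBbb^2}$ is already semistable, in which case $\widehat{\Ecal_k}'=\widehat{\Ecal_{k+1}}$ and we are done, or it is unstable with a unique maximal destabilizing line subsheaf $\underline{\Lcal}$, whose degree is pinned to $0$ by the slope bound and the integrality of degrees. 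Performing the elementary modification of $\widehat{\Ecal_k}'$ along $\PBbb^2$ through $\underline{\Lcal}$ and invoking the increment formula from the proof of Proposition \ref{Prop2.6} (here $\deg\underline{\Lcal}-\deg(\widehat{\Ecal_k}'|_{\PBbb^2}/\underline{\Lcal})=0-(-1)=1$) raises the discriminant by exactly $1$; and the modified sheaf sits in an extension $0\to\underline{\Qcal}(1)\to(\,\cdot\,)\to\underline{\Lcal}\to 0$ of two torsion free rank-one sheaves of degree $0$, hence admits no destabilizing subsheaf and is semistable, hence equals $\widehat{\Ecal_{k+1}}$ by the uniqueness in Proposition \ref{BubbleUniqueness}. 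Therefore $\Delta(\widehat{\Ecal_k}'|_{\PBbb^2})=\Delta_{\Ecal_{k+1}}-1$ in this case.

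The main obstacle is exactly the last point in Step 2: that at most one elementary modification along $\PBbb^2$ separates $\widehat{\Ecal_k}'$ from the normalized semistable extension, equivalently that the modified sheaf in the unstable case is genuinely semistable rather than merely ``less unstable''. This is what makes the constant $-1$ sharp, and it hinges on the estimate $\mu_{\max}(\widehat{\Ecal_k}'|_{\PBbb^2})-\mu(\widehat{\Ecal_k}'|_{\PBbb^2})\le\tfrac12$ inherited from the inclusion into the semistable $\widehat{\Ecal_k}|_{\PBbb^2}$, which leaves no room for a second destabilizing step.
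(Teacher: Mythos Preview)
Your Step~1 is correct and essentially equivalent to the paper's argument: where the paper observes that the kernel of $\widehat{\Ecal_k}'|_{\PBbb^2}\to\widehat{\Ecal_k}|_{\PBbb^2}$ is supported on points and hence zero by torsion-freeness, you phrase the same fact as a $\Tor_1$ vanishing.

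Your Step~2 correctly proves the inequality \emph{as literally stated}, with $\Delta_{\Ecal_{k+1}}$ on the right. However, the paper's own proof actually establishes the inequality with $\Delta_{\Ecal_k}$, not $\Delta_{\Ecal_{k+1}}$: it computes Chern classes directly from the exact sequence of Step~1, obtaining
\[
\Delta(\widehat{\Ecal_k}'|_{\PBbb^2})=\Delta(\widehat{\Ecal_k}|_{\PBbb^2})-1+2\bigl(2c_1(\underline{\Qcal}_k|_{\PBbb^1})-c_1(\widehat{\Ecal_k}|_{\PBbb^2})\bigr),
\]
and then invokes the bound $c_1(\underline{\Qcal}_k|_{\PBbb^1})\ge c_1(\widehat{\Ecal_k}|_{\PBbb^2})+m_k$ from Lemma~\ref{Lemma3.3}. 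The ``$k+1$'' in the displayed statement is evidently a typo for ``$k$'': Corollary~\ref{Cor3.6} invokes the lemma precisely in the form $\Delta(\widehat{\Ecal_k}'|_{\PBbb^2})+1\ge\Delta_{\Ecal_k}$.

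So your argument and the paper's are genuinely different and prove different (both true) inequalities. Yours bounds the distance from $\widehat{\Ecal_k}'$ to the semistable extension of $\Ecal_{k+1}$ by analyzing $\mu_{\max}$; the paper's relates $\widehat{\Ecal_k}'$ back to $\widehat{\Ecal_k}$ via Chern classes. Only the latter feeds into the monotonicity $\Delta_{\Ecal_{k+1}}\ge\Delta_{\Ecal_k}$ of Corollary~\ref{Cor3.6}: your inequality, combined with $\Delta_{\Ecal_{k+1}}\ge\Delta(\widehat{\Ecal_k}'|_{\PBbb^2})$ from Proposition~\ref{Prop2.6}, only pins $\Delta_{\Ecal_{k+1}}-\Delta(\widehat{\Ecal_k}'|_{\PBbb^2})\in\{0,1\}$ and says nothing about $\Delta_{\Ecal_k}$. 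If you want your approach to serve the downstream argument, you would still need to supply the paper's Chern class identity (or an equivalent comparison to $\Delta_{\Ecal_k}$), which is independent of your semistability analysis.
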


\begin{proof}
 Since pulling-back is right exact, by restricting 
    $$
0 \rightarrow \widehat{\Ecal_k}' \rightarrow \widehat{\Ecal_k} \rightarrow  \iota_* (\underline{\Qcal}_k|_{\widehat{B_0}}) \rightarrow 0
$$ 
to $\PBbb^2$, we get the short exact sequence 
  $$
\widehat{\Ecal_k}'|_{\PBbb^2} \rightarrow \widehat{\Ecal_k}|_{\PBbb^2} \rightarrow  (\iota_* (\underline{\Qcal}_k|_{\widehat{B_0}}))|_{\PBbb^2} \rightarrow 0.
$$ 
Then the kernel of the first map is supported at points. Since $\widehat{\Ecal_k}'|_{\PBbb^2}$ is torsion free, it has to be zero. Also, it follows from the definition that 
$$
(\iota_* (\underline{\Qcal}_k|_{\widehat{B_0}}))|_{\PBbb^2}=\iota_* (\underline{\Qcal}_k|_{\widehat{B_0}\cap \PBbb^1})=\iota_*(\underline{\Qcal}_k|_{\PBbb^1}).
$$
In particular, we have the exact sequence claimed. Now the statement about the relation between the discriminant follows from a direct computation by using the obtained short exact sequence and Lemma \ref{Lemma3.3}. Indeed, using the fact that 
$$
c(\iota_* (\underline{\Qcal}_k|_{\PBbb^1}))=1+a+c_1(\underline{\Qcal}_k|_{\PBbb^1}) a^2
$$
where $a=c_1(\Ocal_{\PBbb^1}(1))$ and by the multiplicative property of the total Chern classes, we have the following  
$$
\Delta(\widehat{\Ecal_k}'|_{\PBbb^2})=\Delta(\widehat{\Ecal_k}|_{\PBbb^2})-1
+2(2c_1(\underline{\Qcal}_k|_{\PBbb^1})-c_1(\widehat{\Ecal_k}|_{\PBbb^2})).$$
By Lemma \ref{Lemma3.3}, we know 
$$
\Delta(\widehat{\Ecal_k}'|_{\PBbb^2})\geq \Delta(\widehat{\Ecal_k}|_{\PBbb^2})-1+4m_k+2c_1.
$$
When $c_1(\widehat{\Ecal_k}|_{\PBbb^2})=0$, it is trivial that $\Delta(\widehat{\Ecal_k}'|_{\PBbb^2})\geq \Delta(\widehat{\Ecal_k}|_{\PBbb^2})-1$. When $c_1(\widehat{\Ecal_k}|_{\PBbb^2})=-1$, we know from Lemma \ref{Lemma3.3} that $m_k>0$ which implies $\Delta(\widehat{\Ecal_k}'|_{\PBbb^2})>\Delta(\widehat{\Ecal_k}|_{\PBbb^2})$. The conclusion follows.
\end{proof}

\begin{cor}\label{Cor3.6}
For any $k$, 
$$
\Delta_{\Ecal_{k+1}} \geq \Delta_{\Ecal_k}.
$$ 
In particular, for $k$ large, 
\begin{itemize}
    \item $m_k=0$;
    \item $c_1(\widehat{\Ecal_{k}}|_{\PBbb^2})=0;$
    \item $\Delta_{\Ecal_{k+1}}=\Delta_{\Ecal_k}.$
\end{itemize}
\end{cor}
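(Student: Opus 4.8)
The plan is to prove the monotonicity $\Delta_{\Ecal_{k+1}}\ge\Delta_{\Ecal_k}$ by routing through the auxiliary reflexive extension $\widehat{\Ecal_k}'$ of $\Ecal_{k+1}$ constructed just before Corollary \ref{Lem3.4}, and then to deduce the ``in particular'' part by an eventual-constancy argument. For the latter, every $\Ecal_k$ forms the singularity $\Ocal_{B_0}\oplus\Ical$, so $k(\Ecal_k)=k(\Ecal)$ and hence $\Delta_{\Ecal_k}\le 4k(\Ecal)$ by Theorem \ref{Thm1.1}(2); a non-decreasing sequence of integers bounded above is eventually constant, giving $\Delta_{\Ecal_{k+1}}=\Delta_{\Ecal_k}$ for $k$ large. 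So the real content is the monotonicity together with a description of the equality case.

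For the monotonicity, Corollary \ref{Lem3.4} identifies $\widehat{\Ecal_k}'$ as a reflexive extension of $\Ecal_{k+1}$, so Proposition \ref{Prop2.6} gives $\Delta_{\Ecal_{k+1}}\ge\Delta(\widehat{\Ecal_k}'|_{\PBbb^2})$, and since the elementary modifications used to prove that proposition strictly increase the discriminant, in fact $\Delta_{\Ecal_{k+1}}\ge\Delta(\widehat{\Ecal_k}'|_{\PBbb^2})+1$ whenever $\widehat{\Ecal_k}'|_{\PBbb^2}$ is not semistable. Plugging in the identity from the proof of Lemma \ref{Lemma3.5},
$$
\Delta(\widehat{\Ecal_k}'|_{\PBbb^2})=\Delta_{\Ecal_k}-1+2\bigl(2c_1(\underline{\Qcal}_k|_{\PBbb^1})-c_1(\widehat{\Ecal_k}|_{\PBbb^2})\bigr),
$$
together with $c_1(\underline{\Qcal}_k|_{\PBbb^1})=c_1(\widehat{\Ecal_k}|_{\PBbb^1})+m_k\ge c_1(\widehat{\Ecal_k}|_{\PBbb^2})+m_k\ge 0$ and $m_k\ge 1$ when $c_1(\widehat{\Ecal_k}|_{\PBbb^2})=-1$ (Lemma \ref{Lemma3.3}), one gets $\Delta(\widehat{\Ecal_k}'|_{\PBbb^2})\ge\Delta_{\Ecal_k}+1$ when $c_1(\widehat{\Ecal_k}|_{\PBbb^2})=-1$, and $\Delta(\widehat{\Ecal_k}'|_{\PBbb^2})\ge\Delta_{\Ecal_k}+3$ when $c_1(\widehat{\Ecal_k}|_{\PBbb^2})=0$ and $c_1(\underline{\Qcal}_k|_{\PBbb^1})\ge 1$; in either case $\Delta_{\Ecal_{k+1}}\ge\Delta_{\Ecal_k}$.

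The remaining case, $c_1(\widehat{\Ecal_k}|_{\PBbb^2})=0$ with $c_1(\underline{\Qcal}_k|_{\PBbb^1})=0$ (equivalently $c_1(\widehat{\Ecal_k}|_{\PBbb^1})=m_k=0$), is the crux: the identity then yields only $\Delta(\widehat{\Ecal_k}'|_{\PBbb^2})=\Delta_{\Ecal_k}-1$, while the exact sequence of Lemma \ref{Lemma3.5} forces $c_1(\widehat{\Ecal_k}'|_{\PBbb^2})=c_1(\widehat{\Ecal_k}|_{\PBbb^2})-1=-1$. Here I would invoke the section: since $m_k=0$, the section $\widehat{s_k}|_{\widehat{B_0}}$ does not vanish identically along $\PBbb^1$, so $\widehat{s_k}|_{\PBbb^1}\ne 0$, and therefore $\widehat{s_k}$ restricts to a nonzero section of $\widehat{\Ecal_k}'|_{\PBbb^2}$ (it remains a section of $\widehat{\Ecal_k}'$ by Corollary \ref{Lem3.4}, and $\widehat{\Ecal_k}'|_{\PBbb^2}\hookrightarrow\widehat{\Ecal_k}|_{\PBbb^2}$ by Lemma \ref{Lemma3.5}). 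Since a semistable torsion-free sheaf of negative slope has no nonzero global section, and $c_1(\widehat{\Ecal_k}'|_{\PBbb^2})=-1$, the sheaf $\widehat{\Ecal_k}'|_{\PBbb^2}$ must be unstable; hence $\Delta_{\Ecal_{k+1}}\ge\Delta(\widehat{\Ecal_k}'|_{\PBbb^2})+1=\Delta_{\Ecal_k}$, finishing the monotonicity.

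Finally, for the three bullets, fix $k$ large enough that $\Delta_{\Ecal_{k+1}}=\Delta_{\Ecal_k}$. The case $c_1(\widehat{\Ecal_k}|_{\PBbb^2})=-1$ is excluded, since it forced $\Delta_{\Ecal_{k+1}}\ge\Delta_{\Ecal_k}+1$; so $c_1(\widehat{\Ecal_k}|_{\PBbb^2})=0$. Then $\Delta(\widehat{\Ecal_k}'|_{\PBbb^2})\le\Delta_{\Ecal_{k+1}}=\Delta_{\Ecal_k}$ combined with the displayed identity gives $c_1(\underline{\Qcal}_k|_{\PBbb^1})=0$, hence $m_k=0$ (using $c_1(\underline{\Qcal}_k|_{\PBbb^1})=c_1(\widehat{\Ecal_k}|_{\PBbb^1})+m_k$ and $c_1(\widehat{\Ecal_k}|_{\PBbb^1})\ge 0$). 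I expect the borderline case above — ruling out a single discriminant-decreasing modification that happens to land on a stable extension of odd degree — to be the one real obstacle; the rest is the Chern-class bookkeeping of Lemma \ref{Lemma3.5} and the extremal characterization of Proposition \ref{Prop2.6}.
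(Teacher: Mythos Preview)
Your proof is correct and uses the same ingredients as the paper's: Lemma~\ref{Lemma3.5} for the discriminant identity, the section $\widehat{s_k}$ (via Corollary~\ref{Lem3.4}) to force instability of $\widehat{\Ecal_k}'|_{\PBbb^2}$, Proposition~\ref{Prop2.6} to pass to $\Delta_{\Ecal_{k+1}}$, and the upper bound on $\Delta_{\Ecal_k}$ for eventual constancy. The paper's argument is shorter only because it invokes the section argument uniformly: it observes that $c_1(\widehat{\Ecal_k}'|_{\PBbb^2})<0$ always (this follows from the exact sequence in Lemma~\ref{Lemma3.5}, since $c_1$ drops by the class of $\PBbb^1$), so the existence of the section $\widehat{s_k}|_{\PBbb^2}$ gives instability and hence $\Delta_{\Ecal_{k+1}}\ge\Delta(\widehat{\Ecal_k}'|_{\PBbb^2})+1\ge\Delta_{\Ecal_k}$ in one stroke. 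Your case analysis is in fact more scrupulous on one point the paper leaves implicit: the section $\widehat{s_k}|_{\PBbb^2}$ is \emph{a priori} only guaranteed to be nonzero when $m_k=0$ (indeed, when $c_1(\widehat{\Ecal_k}|_{\PBbb^2})=-1$ it is forced to vanish by stability), and you correctly note that in every other case the sharper estimate from Lemma~\ref{Lemma3.5} already gives $\Delta(\widehat{\Ecal_k}'|_{\PBbb^2})\ge\Delta_{\Ecal_k}$ without needing the $+1$ from instability. So your ``borderline case'' is not an obstacle at all---it is precisely the case where the paper's argument applies verbatim, and your treatment of the remaining cases is a clean way to cover what the paper glosses over.
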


\begin{proof}
    By definition, $\widehat{s_k}|_{\PBbb^2}$ is a section of $\widehat{\Ecal_k}'|_{\PBbb^2}$ where 
    $$
    c_1(\widehat{\Ecal_k}'|_{\PBbb^2})<0.
    $$
    Thus $\widehat{\Ecal_k}'|_{\PBbb^2}$ is unstable. Now as already noted in the proof of Proposition \ref{Prop2.6}, the elementary modification along its maximal destabilizing rank one subsheaf increases the discriminant by at least $1$. In particular, by Lemma \ref{Lemma3.5}, we have 
    $$
    \begin{aligned}
    \Delta_{\Ecal_{k+1}}&\geq \Delta(\widehat{\Ecal_k}'|_{\PBbb^2})+1\\ 
    &\geq \Delta_{\Ecal_{k}}.
    \end{aligned}
    $$
    Since $\Delta_{\Ecal_l} \leq k(\Ecal)$ for any $l$ by Proposition \ref{Cor2.11} and $\Delta_{\Ecal_{l}}$ is a sequence of non-decreasing integers, we must have 
    $$\Delta_{\Ecal_{k+1}}=\Delta_{\Ecal_k}$$ 
    for $k$ large. This forces 
    $$m_k=0$$
    and
    $$c_1(\widehat{\Ecal_k}|_{\PBbb^2})=0$$
    for $k$ large. The conclusion follows.
\end{proof}

Denote $k_0$ as the first integer so that 
$$\Delta_{\Ecal_k}=\Delta_{\Ecal_{k_0}}$$ 
for any $k\geq k_0$. 
\begin{lem}\label{Lemma3.7}
    For $k\geq k_0$, 
$$
0\rightarrow \Ocal_{\widehat{B}} \xrightarrow{\widehat{s_k}} \widehat{\Ecal_{k}} \rightarrow \widehat{\Qcal_{k}} \rightarrow 0
$$
where $\widehat{\Qcal_k}|_{\PBbb^2}$ is torsion free and $\widehat{\Qcal_k}$ is isomorphic to $p^* \Qcal$ away from the exceptional divisor $\PBbb^2$.
\end{lem}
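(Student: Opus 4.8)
The plan is to construct the sequence directly from the tautological section $\widehat{s_k}$ and then to extract the two asserted properties; only the torsion freeness of the restriction to $\PBbb^2$ is substantial, and that is where the hypothesis $k\geq k_0$ enters.

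First I would recall that $\widehat{s_k}$ is, by construction, the image of the section $s_k$ of $\Ecal_k=p_*\widehat{\Ecal_k}$ under the tautological map $p^*p_*\widehat{\Ecal_k}=p^*\Ecal_k\to\widehat{\Ecal_k}$. Since this map is an isomorphism over $\widehat{B}\setminus\PBbb^2$ and $s_k\neq 0$, the section $\widehat{s_k}$ is nonzero; as $\widehat{B}$ is integral and $\widehat{\Ecal_k}$ is torsion free, the induced map $\Ocal_{\widehat{B}}\xrightarrow{\widehat{s_k}}\widehat{\Ecal_k}$ is then injective, and I take $\widehat{\Qcal_k}:=\coker(\widehat{s_k})$, which gives the stated short exact sequence. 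Over $\widehat{B}\setminus\PBbb^2$ the map $p$ is an isomorphism onto $B\setminus\{0\}$ under which $\widehat{\Ecal_k}$ and $\widehat{s_k}$ are identified with the pullbacks of $\Ecal_k$ and $s_k$, so $\widehat{\Qcal_k}$ there is the pullback of $\Qcal_k=\coker(s_k\colon\Ocal_B\to\Ecal_k)$. Since each modification $\Ecal_{j}\to\iota_*\Ical$ is an isomorphism off the divisor $B_0$ and, near $B_0\setminus\{0\}$, merely replaces a local splitting $\Ocal\oplus\Ocal$ of $\Ecal_{j-1}$ by $\Ocal\oplus (z)$ (with $z$ a local equation of $B_0$), the quotient $\Qcal_k$ is, away from $0$, isomorphic to the fixed sheaf $\Qcal$ with $\Qcal|_{B_0}\cong\Ical$ from the extension constructed above; hence $\widehat{\Qcal_k}\cong p^*\Qcal$ over $\widehat{B}\setminus\PBbb^2$. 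These first points are formal.

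The substantive claim is that $\widehat{\Qcal_k}|_{\PBbb^2}$ is torsion free, and here I would use Corollary \ref{Cor3.6}: for $k\geq k_0$ we have $m_k=0$ and $c_1(\widehat{\Ecal_k}|_{\PBbb^2})=0$, so $\widehat{\Ecal_k}|_{\PBbb^2}$ is semistable of slope $0$. Because $m_k=0$, the section $\widehat{s_k}|_{\widehat{B_0}}$ does not vanish identically along $\PBbb^1$, so $\widehat{s_k}|_{\PBbb^1}\neq 0$ and in particular $\widehat{s_k}|_{\PBbb^2}\neq 0$; hence $\widehat{s_k}|_{\PBbb^2}\colon\Ocal_{\PBbb^2}\to\widehat{\Ecal_k}|_{\PBbb^2}$ is a nonzero map into a torsion free sheaf over the integral surface $\PBbb^2$, so it is injective. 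Let $\underline{\Lcal}\subset\widehat{\Ecal_k}|_{\PBbb^2}$ be the saturation of its image: then $\underline{\Lcal}$ is a reflexive rank one subsheaf, i.e.\ a line bundle $\Ocal_{\PBbb^2}(d)$, with $d\geq 0$ since it receives a nonzero map from $\Ocal_{\PBbb^2}$; semistability of $\widehat{\Ecal_k}|_{\PBbb^2}$ forces $d=\mu(\underline{\Lcal})\leq 0$, whence $d=0$, $\underline{\Lcal}=\Ocal_{\PBbb^2}$, and the image of $\widehat{s_k}|_{\PBbb^2}$ is already saturated. Restricting the short exact sequence above to $\PBbb^2$ then identifies $\widehat{\Qcal_k}|_{\PBbb^2}$ with $\widehat{\Ecal_k}|_{\PBbb^2}/\underline{\Lcal}$, which is torsion free.

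The main obstacle is precisely this last step: one needs, \emph{before} restricting to $\PBbb^2$, both that $\widehat{s_k}$ does not vanish along $\PBbb^1$ and that $\widehat{\Ecal_k}|_{\PBbb^2}$ is semistable of degree $0$, since together these upgrade $\widehat{s_k}|_{\PBbb^2}$ from a merely nonzero section to a nowhere-vanishing one — a line subbundle with torsion free quotient. Both inputs are special to $k\geq k_0$ and rest on Corollary \ref{Cor3.6} (and through it on Propositions \ref{Prop2.6} and \ref{Cor2.11}); for general $k$ the section $\widehat{s_k}|_{\PBbb^2}$ could vanish identically, or cut out a rank one subsheaf with torsion quotient, and then $\widehat{\Qcal_k}|_{\PBbb^2}$ would acquire torsion.
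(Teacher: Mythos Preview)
Your argument is correct and follows essentially the same route as the paper: both use Corollary \ref{Cor3.6} to know that for $k\geq k_0$ the sheaf $\widehat{\Ecal_k}|_{\PBbb^2}$ is semistable of degree $0$, and then deduce that the image of $\widehat{s_k}|_{\PBbb^2}$ is a saturated $\Ocal_{\PBbb^2}$, so the quotient is torsion free. The only cosmetic difference is that the paper phrases the saturation step via the double dual $\widehat{\Ecal_k}|_{\PBbb^2}^{**}$ (the section vanishes in codimension two there, so $\widehat{\Ecal_k}|_{\PBbb^2}^{**}/\Ocal_{\PBbb^2}$ is torsion free, and $\widehat{\Ecal_k}|_{\PBbb^2}/\Ocal_{\PBbb^2}$ is a subsheaf of it), whereas you argue directly that the saturation is $\Ocal_{\PBbb^2}(d)$ with $d=0$ by the degree squeeze; these are equivalent.
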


\begin{proof}
    Since $\widehat{\Ecal_k}|_{\PBbb^2}$ is semistable of degree $0$ by Corollary \ref{Cor3.6}, $\widehat{s_k}|_{\PBbb^2}$ vanishes at isolated points when viewed as a section of $\widehat{\Ecal_k}|_{\PBbb^2}^{**}$. In particular, the induced quotient $\widehat{\Ecal_k}|_{\PBbb^2}^{**}/\Ocal_{\PBbb^2}$ is torsion free, thus $\widehat{\Ecal_k}|_{\PBbb^2}/\Ocal_{\PBbb^2}\cong \widehat{\Qcal_k}|_{\PBbb^2}$ is torsion free since it is a subsheaf of $\widehat{\Ecal_k}|_{\PBbb^2}^{**}/\Ocal_{\PBbb^2}$. In particular, $\widehat{\Qcal_k}$ is torsion free.
\end{proof}

\begin{lem}\label{prop3.9}
    $c_2(\widehat{\Qcal_k}|_{\PBbb^2})\geq \lim\sup_{z\rightarrow 0} l(p_*(\Qcal_k)|_{B_z})=k(\Ecal)$. 
\end{lem}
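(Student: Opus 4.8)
The equality $\limsup_{z\to 0}l\bigl(p_{*}(\widehat{\Qcal_{k}})|_{B_{z}}\bigr)=k(\Ecal)$ is the easy half, so I would dispose of it first. Since $k\geq k_{0}$, Corollary \ref{Cor3.6} gives $c_{1}(\widehat{\Ecal_{k}}|_{\PBbb^{2}})=0$, so restricting the sequence of Lemma \ref{Lemma3.7} to $\PBbb^{2}$ yields $0\to\Ocal_{\PBbb^{2}}\to\widehat{\Ecal_{k}}|_{\PBbb^{2}}\to\widehat{\Qcal_{k}}|_{\PBbb^{2}}\to 0$; in particular $c_{1}(\widehat{\Qcal_{k}}|_{\PBbb^{2}})=0$, hence $\widehat{\Qcal_{k}}|_{\PBbb^{2}}\cong\mathcal I_{Z}$ for a $0$-dimensional $Z\subset\PBbb^{2}$ with $c_{2}(\widehat{\Qcal_{k}}|_{\PBbb^{2}})=\dim_{\CBbb}\Ocal_{Z}=c_{2}(\widehat{\Ecal_{k}}|_{\PBbb^{2}})$. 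Pushing the sequence of Lemma \ref{Lemma3.7} forward by $p$, using $p_{*}\widehat{\Ecal_{k}}\cong\Ecal_{k}$ and $R^{i}p_{*}\Ocal_{\widehat{B}}=0$ for $i\geq 1$, gives $p_{*}\widehat{\Qcal_{k}}\cong\Ecal_{k}/\Ocal_{B}\cong\Qcal_{k}$, so $l\bigl(p_{*}(\widehat{\Qcal_{k}})|_{B_{z}}\bigr)=l(\Qcal_{k}|_{B_{z}})=k(\Ecal)$ for all small $z\neq 0$ by the preceding lemma. It remains to prove $c_{2}(\widehat{\Qcal_{k}}|_{\PBbb^{2}})\geq k(\Ecal)$.

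For this I would rerun, for the rank one sheaf $\widehat{\Qcal_{k}}$, the bookkeeping in the proofs of Lemma \ref{Thm1.5} and Corollary \ref{computation}. Pushing down $0\to\widehat{\Qcal_{k}}(1)\xrightarrow{\,z\,}\widehat{\Qcal_{k}}\to\widehat{\Qcal_{k}}|_{\widehat{B_{0}}}\to 0$ (with $\widehat{\Qcal_{k}}(-\widehat{B_{0}})\cong\widehat{\Qcal_{k}}(1)$), noting that $p_{*}\widehat{\Qcal_{k}}(1)=p_{*}\widehat{\Qcal_{k}}=\Qcal_{k}$ (both agree with $\Qcal_{k}$ since $H^{0}(\PBbb^{2},\mathcal I_{Z}(-1))=0$) and that the resulting map $\Qcal_{k}\to\Qcal_{k}$ is multiplication by $z$ with cokernel $\Qcal_{k}|_{B_{0}}\cong\Ical$, and combining with the identity
\[
\chi'(R^{\bullet}p_{*}\widehat{\Qcal_{k}})-\chi'(R^{\bullet}p_{*}\widehat{\Qcal_{k}}(1))=c_{2}(\widehat{\Qcal_{k}}|_{\PBbb^{2}})
\]
(which follows from $0\to\widehat{\Qcal_{k}}\to\widehat{\Qcal_{k}}(1)\to\widehat{\Qcal_{k}}|_{\PBbb^{2}}(-1)\to 0$ together with $\chi(\PBbb^{2},\mathcal I_{Z}(-1))=-\dim_{\CBbb}\Ocal_{Z}$), one obtains
\[
c_{2}(\widehat{\Qcal_{k}}|_{\PBbb^{2}})=\dim_{\CBbb}\bigl(p_{*}(\widehat{\Qcal_{k}}|_{\widehat{B_{0}}})/\Ical\bigr)_{0}-\dim_{\CBbb}R^{1}p_{*}(\widehat{\Qcal_{k}}|_{\widehat{B_{0}}})_{0}.
\]
So the lemma is equivalent to showing that this right-hand side is at least $k(\Ecal)$.

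To see why that should hold I would use the flat family $\widehat{\Qcal_{k}}$ over the $z$-disc $\DBbb$, via $q=z\circ p\colon\widehat{B}\to\DBbb$ — flat because $\widehat{\Qcal_{k}}$ is torsion free, so $z$ is a nonzerodivisor. For $z\neq 0$ the section $\widehat{s_{k}}$ restricts to $s_{k}|_{B_{z}}\colon\Ocal_{B_{z}}\to\Ecal_{k}|_{B_{z}}$, a section of a rank two bundle whose zero scheme $W_{z}$ is $0$-dimensional of length $l(\Qcal_{k}|_{B_{z}})=k(\Ecal)$, with $\Qcal_{k}|_{B_{z}}\cong\mathcal I_{W_{z}}$ up to a trivial twist. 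The key geometric input is that $\mathrm{Sing}(\Qcal_{k})$ meets the central surface $B_{0}$ only at the origin, because $\Qcal_{k}|_{B_{0}}\cong\Ical$ is locally free off $0$; consequently the zero scheme $\widehat{W}$ of the global section $\widehat{s_{k}}$ is purely $1$-dimensional and, away from the finite set $\mathrm{Sing}(\widehat{\Ecal_{k}})\subset\PBbb^{2}$, is a local complete intersection meeting the central fibre $q^{-1}(0)=\widehat{B_{0}}\cup\PBbb^{2}$ only along $\PBbb^{2}$ (the strict transform of $\mathrm{Sing}(\Qcal_{k})$ can touch $\widehat{B_{0}}$ only along $\PBbb^{1}\subset\PBbb^{2}$). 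Hence $\widehat{W}\to\DBbb$ is finite, hence flat, near $0$, so its central fibre is a $0$-dimensional scheme of length $k(\Ecal)$ supported on $\PBbb^{2}$: none of the singularity of the generic fibre escapes onto $\widehat{B_{0}}$, so it must all be recorded by $\widehat{\Qcal_{k}}|_{\PBbb^{2}}$, hence by $Z$.

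The step I expect to be the main obstacle is turning this picture into the inequality above — i.e.\ controlling what happens along $\PBbb^{1}=\widehat{B_{0}}\cap\PBbb^{2}$, where $p^{*}z$ vanishes on both components of the central fibre and where $\widehat{\Qcal_{k}}|_{\PBbb^{1}}$ may carry torsion (precisely the pathology of restriction to a codimension-two set flagged in the introduction), so that a naive count of lengths does not close the gap. I would handle this by a Mayer--Vietoris computation on a proper model: comparing $\chi$ of $\widehat{\Qcal_{k}}$ over the central fibre, written as $\chi(\widehat{B_{0}},\widehat{\Qcal_{k}}|_{\widehat{B_{0}}}(n))+\chi(\PBbb^{2},\widehat{\Qcal_{k}}|_{\PBbb^{2}}(n))-\chi(\PBbb^{1},\widehat{\Qcal_{k}}|_{\PBbb^{1}}(n))$, with $\chi(B_{z},\mathcal I_{W_{z}}(n))=\chi(\Ocal_{B_{z}}(n))-k(\Ecal)$ for $z\neq 0$; subtracting the structure-sheaf Mayer--Vietoris collapses this to
\[
c_{2}(\widehat{\Qcal_{k}}|_{\PBbb^{2}})-k(\Ecal)=\bigl(\chi(\widehat{\Qcal_{k}}|_{\widehat{B_{0}}})-\chi(\Ocal_{\widehat{B_{0}}})\bigr)-\bigl(\chi(\widehat{\Qcal_{k}}|_{\PBbb^{1}})-\chi(\Ocal_{\PBbb^{1}})\bigr),
\]
both differences being well-defined integers since the sheaves agree with the structure sheaf outside a neighbourhood of $\PBbb^{1}$. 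One is then reduced to a local estimate at $\PBbb^{1}$ — using that $\widehat{\Qcal_{k}}|_{\widehat{B_{0}}}$ coincides with $(p|_{\widehat{B_{0}}})^{*}\Ical$ off $\PBbb^{1}$, that blowing up only raises colength, and that the torsion length of $\widehat{\Qcal_{k}}|_{\PBbb^{1}}$ is nonnegative — and this local estimate (equivalently: the flat limit of the schemes $W_{z}$ is entirely absorbed by $Z$, with no negative contribution from $\widehat{B_{0}}$ or from torsion on $\PBbb^{1}$) is the crux.
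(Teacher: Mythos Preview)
Your argument is incomplete: after setting up the Mayer--Vietoris identity, you reduce everything to a ``local estimate at $\PBbb^1$'' which you call ``the crux'' but never actually prove. Listing that blowing up raises colength and that torsion lengths are nonnegative does not assemble into an inequality of the required sign; and in fact the Mayer--Vietoris sequence for $\widehat{\Qcal_k}$ on the reducible surface $\widehat{B_0}\cup_{\PBbb^1}\PBbb^2$ need not be exact when $\widehat{\Qcal_k}|_{\PBbb^1}$ carries torsion, so even the displayed identity is suspect.

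The paper bypasses all of this by working directly with the double dual quotient rather than with pushforward cohomology. Since $(\widehat{\Qcal_k})^{**}\cong\Ocal_{\widehat{B}}$, one has
\[
0\longrightarrow\widehat{\Qcal_k}\longrightarrow\Ocal_{\widehat{B}}\longrightarrow\tau_k\longrightarrow 0
\]
with $\tau_k$ supported in dimension $\leq 1$. Then $c_2(\widehat{\Qcal_k}|_{\PBbb^2})=l(\tau_k|_{\PBbb^2})$ and, for $z\neq 0$, $l(\Qcal_k|_{B_z})=l(\tau_k|_{\widehat{B_z}})$. The support constraint --- that $\tau_k$ misses $\widehat{B_0}\setminus\PBbb^1$, because $\Qcal_k|_{B_0}\cong\Ical$ is locally free off $0$ --- forces the support of $\tau_k|_{\widehat{B_z}}$ to accumulate into $\PBbb^2$ as $z\to 0$, and the inequality is then upper semicontinuity of fibre length for $\tau_k$. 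Your ``geometric'' paragraph is essentially this argument in disguise: your zero scheme $\widehat{W}$ is exactly the subscheme with $\Ocal_{\widehat{W}}=\tau_k$. But you aimed for flatness of $\widehat{W}\to\DBbb$ (hence equality), worried about $\mathrm{Sing}(\widehat{\Ecal_k})$, and then abandoned the approach for the harder cohomological route, when semicontinuity (hence $\geq$) is all that is needed and requires none of that care. No pushforwards, no $R^1p_*$, and no Mayer--Vietoris are needed.
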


\begin{proof}
Denote 
$$
0\rightarrow \widehat{\Qcal_k} \rightarrow (\widehat{\Qcal_k})^{**}\cong \Ocal_{\widehat{B}} \rightarrow \tau_k \rightarrow 0.
$$
Then by assumption, since $p_*(\widehat{\Qcal_k})|_{B_0}$ is locally free away from $0\in B_0$, we know 
$$
Supp(\tau_k) \subset (\widehat{B} \setminus \widehat{B_0}) \cup \PBbb^1.
$$
Let $\widehat{B_z}$ be the strict transform of $B_z$ in $\widehat{B}$. Then by definition
$$
l(p_*(\Qcal_k)|_{B_z})=l(\tau_k|_{\widehat{B_z}}),
$$
while 
$$
c_2(\widehat{\Qcal_k}|_{\PBbb^2})=l(\tau_k|_{\PBbb^2}).
$$
Since the support of $\tau_k|_{\widehat{\CBbb^2_z}}$ converges to a subset of $\PBbb^2$ as $z \rightarrow 0$, by semicontinuity, we know 
$$
l(\tau_k|_{\PBbb^2})\geq \limsup_{z\rightarrow 0} l(\tau_k|_{\widehat{B_z}}).
$$
The conclusion follows.
\end{proof}

\begin{cor}\label{Cor3.9}
For $k\geq k_0$, $\Ecal_k$ are nonbarren; for any $k>k_0$, $\Ecal_{k}$ are all cone families with isomorphic cones. 
\end{cor}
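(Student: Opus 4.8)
The plan is to read off the nonbarren assertion directly from Lemma \ref{Lemma3.7}, Lemma \ref{prop3.9} and the discriminant inequalities of Section \ref{Bubble}, and then, by induction, to analyse a single elementary modification step on the exceptional divisor so as to recover the cone structure and its independence of $k$; one may assume $k(\Ecal)\geq 1$. For the nonbarren statement, fix $k\geq k_0$. By Corollary \ref{Cor3.6} one has $c_1(\widehat{\Ecal_k}|_{\PBbb^2})=0$, so restricting the sequence of Lemma \ref{Lemma3.7} to $\PBbb^2$ (which stays left exact because $\widehat{\Qcal_k}$ is torsion free on $\widehat{B}$ and $\PBbb^2$ is Cartier, so $\Tor_1(\widehat{\Qcal_k},\Ocal_{\PBbb^2})=0$) gives
$$0\rightarrow\Ocal_{\PBbb^2}\xrightarrow{\widehat{s_k}|_{\PBbb^2}}\widehat{\Ecal_k}|_{\PBbb^2}\rightarrow\widehat{\Qcal_k}|_{\PBbb^2}\rightarrow 0,$$
so $\widehat{\Qcal_k}|_{\PBbb^2}$ is a torsion free rank one sheaf of $c_1=0$, i.e.\ the ideal sheaf of a zero dimensional scheme, and $c_2(\widehat{\Ecal_k}|_{\PBbb^2})=c_2(\widehat{\Qcal_k}|_{\PBbb^2})$. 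Now $c_2(\widehat{\Qcal_k}|_{\PBbb^2})\geq k(\Ecal)$ by Lemma \ref{prop3.9}, while $\Delta_{\Ecal_k}=4\,c_2(\widehat{\Ecal_k}|_{\PBbb^2})\leq 4k(\Ecal_k)=4k(\Ecal)$ by Theorem \ref{Thm1.1}(2) (the multiplicity being unchanged along the $\Ecal_k$). Hence $\Delta_{\Ecal_k}=4k(\Ecal)$, so $\widehat{\Ecal_k}|_{\PBbb^1}\cong\Ocal_{\PBbb^1}^{\oplus 2}$ by Proposition \ref{BubbleCharaterization}; since $\widehat{\Ecal_k}|_{\PBbb^2}$ is semistable, $\Ecal_k$ is a cone or a fertile family, hence nonbarren.

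For the cone assertion I would induct on $k>k_0$. Corollary \ref{Cor3.6} applies to $\widehat{\Ecal_{k-1}}$ ($m_{k-1}=0$, $c_1(\widehat{\Ecal_{k-1}}|_{\PBbb^2})=0$), so Lemma \ref{Lemma3.3} gives $\underline{\Qcal}_{k-1}|_{\PBbb^1}$ torsion free of degree $0$, i.e.\ $\cong\Ocal_{\PBbb^1}$, and by Corollary \ref{Lem3.4} and Lemma \ref{Lemma3.5} the sheaf $\widehat{\Ecal_{k-1}}'$ satisfies $p_*\widehat{\Ecal_{k-1}}'\cong\Ecal_k$, carries $\widehat{s_{k-1}}$, and has $\widehat{\Ecal_{k-1}}'|_{\PBbb^2}$ of degree $-1$ with discriminant $4k(\Ecal)-1$ (so $c_2=k(\Ecal)$). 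As in the proof of Proposition \ref{Prop2.6}, $\widehat{\Ecal_k}$ is reached from $\widehat{\Ecal_{k-1}}'$ by elementary modifications along maximal destabilizing subsheaves of the $\PBbb^2$-restriction, each raising the discriminant by at least $1$; since $\Delta_{\Ecal_k}=4k(\Ecal)$ by the first part there is exactly one such step, raising it by exactly $1$, and the degree computation of Proposition \ref{Prop2.6} then forces the destabilizing subsheaf to be the saturation ($\cong\Ocal_{\PBbb^2}$) of the section. Hence
$$0\rightarrow\underline{\Jcal}_k\rightarrow\widehat{\Ecal_k}|_{\PBbb^2}\xrightarrow{\pi}\Ocal_{\PBbb^2}\rightarrow 0,\qquad\underline{\Jcal}_k:=\bigl(\widehat{\Ecal_{k-1}}'|_{\PBbb^2}/\Ocal_{\PBbb^2}\bigr)(1),$$
with $\underline{\Jcal}_k$ an ideal sheaf of a length $k(\Ecal)$ zero dimensional scheme. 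Composing $\pi$ with the injection $\Ocal_{\PBbb^2}\hookrightarrow\widehat{\Ecal_k}|_{\PBbb^2}$ from the previous paragraph produces a scalar $\lambda$, which must be nonzero, for otherwise the (nonzero) section would factor through $\underline{\Jcal}_k$, contradicting $H^0(\underline{\Jcal}_k)=0$ (its scheme being nonempty since $k(\Ecal)\geq 1$). So the sequence splits, $\widehat{\Ecal_k}|_{\PBbb^2}\cong\Ocal_{\PBbb^2}\oplus\underline{\Jcal}_k$, whence $\widehat{\Ecal_k}|_{\PBbb^2}^{**}\cong\Ocal_{\PBbb^2}^{\oplus 2}$; together with the triviality at infinity already shown, $\Ecal_k$ is a cone family.

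To identify the cones, observe that for $k>k_0+1$ the inductive hypothesis $\widehat{\Ecal_{k-1}}|_{\PBbb^2}\cong\Ocal_{\PBbb^2}\oplus\underline{\Jcal}_{k-1}$, combined with the torsion freeness of $\widehat{\Ecal_{k-1}}|_{\PBbb^1}\cong\Ocal_{\PBbb^1}^{\oplus 2}$, shows the scheme cut out by $\underline{\Jcal}_{k-1}$ is disjoint from $\PBbb^1$; then the modification along $\underline{\Qcal}_{k-1}|_{\PBbb^1}$ only twists the second summand, $\widehat{\Ecal_{k-1}}'|_{\PBbb^2}\cong\Ocal_{\PBbb^2}\oplus\underline{\Jcal}_{k-1}(-1)$, so $\underline{\Jcal}_k=\bigl(\underline{\Jcal}_{k-1}(-1)\bigr)(1)=\underline{\Jcal}_{k-1}$; hence $\underline{\Jcal}_k\cong\underline{\Jcal}_{k_0+1}=:\underline{\Ical}$ for all $k>k_0$, and these families share the common cone $\Ocal_{\PBbb^2}\oplus\underline{\Ical}$. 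The step I expect to require the most care is the passage from $\widehat{\Ecal_{k-1}}$ to $\widehat{\Ecal_k}$: one must track precisely which restrictions to $\PBbb^1$ versus $\PBbb^2$ can acquire torsion, verify that ``a single modification, along $\Ocal_{\PBbb^2}$'' is forced solely by maximality of the discriminant (which also uses that this step preserves $p_*$, via vanishing of $H^0$ of a negative twist), and reconcile the two exact sequences for $\widehat{\Ecal_k}|_{\PBbb^2}$ — the one from Lemma \ref{Lemma3.7} and the one from Proposition \ref{Prop2.6} — compatibly with the distinguished section $\widehat{s_k}$.
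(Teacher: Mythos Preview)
Your argument is correct and follows essentially the same route as the paper: both establish nonbarrenness by combining Lemma~\ref{prop3.9} with the discriminant bound of Theorem~\ref{Thm1.1}(2) and Proposition~\ref{BubbleCharaterization}, and then obtain the cone splitting $\Ocal_{\PBbb^2}\oplus\underline{\Ical}$ by showing the section splits the short exact sequence produced by a single elementary modification, with the isomorphism of cones read off inductively. Two small points are worth tightening: Lemma~\ref{Lemma3.3} only asserts torsion-freeness of $\underline{\Qcal}_{k-1}$ over $\widehat{B_0}$, not of its restriction to $\PBbb^1$ --- the latter follows instead from your already-established $\widehat{\Ecal_{k-1}}|_{\PBbb^1}\cong\Ocal_{\PBbb^1}^{\oplus 2}$, since any nonzero $\Ocal_{\PBbb^1}\to\Ocal_{\PBbb^1}^{\oplus 2}$ is nowhere vanishing; and where you argue that the maximal destabilizer of $\widehat{\Ecal_{k-1}}'|_{\PBbb^2}$ is forced to be the saturation of the section, the paper instead modifies directly along the section's $\Ocal_{\PBbb^2}$ (using the sequence $0\to\Ocal_{\PBbb^2}\to\widehat{\Ecal_{k-1}}'|_{\PBbb^2}\to\widehat{\Qcal_{k-1}}|_{\PBbb^2}(-1)\to 0$), observes the result is semistable of degree zero, and invokes uniqueness --- this sidesteps the need to identify the Harder--Narasimhan piece and makes the identification $\underline{\Jcal}_k\cong\widehat{\Qcal_{k-1}}|_{\PBbb^2}$ immediate.
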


\begin{proof}
By Lemma \ref{prop3.9}, for any $k\geq k_0$, we have 
$$
\Delta_{\Ecal_{k}}\geq  \lim\sup_{z\rightarrow 0} l(p_*(\Qcal_k)|_{B_z})=k(\Ecal).
$$
Thus the equality must hold, i.e., 
$$
\Delta_{\Ecal_{k}}=k(\Ecal).
$$ 
which by Proposition \ref{BubbleCharaterization} implies
$$
\widehat{\Ecal_k}|_{\PBbb^1}\cong \Ocal_{\PBbb^1}^{\oplus 2},
$$
i.e., $\Ecal_{k}$ must be non-barren. By definition,  $\widehat{\Ecal_{k_0}}'|_{\PBbb^2}$ lies in the following exact sequence 
$$
0\rightarrow \Ocal_{\PBbb^2} \rightarrow \widehat{\Ecal_{k_0}}'|_{\PBbb^2} \rightarrow \widehat{\Qcal_{k_0}}|_{\PBbb^2}(-1) \rightarrow 0.
$$
Then $\widehat{\Ecal_{k+1}}$ is the elementary modification of $\widehat{\Ecal_{k_0}}'$ along $\Ocal_{\PBbb^2}$. In particular, it satisfies 
$$
0\rightarrow \widehat{\Qcal_{k_0}}|_{\PBbb^2} \rightarrow \widehat{\Ecal_{k+1}}|_{\PBbb^2} \rightarrow \Ocal_{\PBbb^2} \rightarrow 0.
$$
Since by definition $\widehat{s_k}$ as a section of $\widehat{\Ecal_{k+1}}$ restricts to a section of $\widehat{\Ecal_k}$ which maps onto $\Ocal_{\PBbb^2}$ through the short exact sequence above, it must split, i.e., 
$$
\widehat{\Ecal_{k+1}}|_{\PBbb^2} \cong \Ocal_{\PBbb^2}\oplus \widehat{\Qcal_{k_0}}|_{\PBbb^2}.
$$
Now the statement for general $k$ follows from induction. 
\end{proof}

This finishes the proof of Part $(1)$ and $(2)$ of Theorem \ref{Main}.

\begin{rmk}
    In general, it could happen that $\Ecal_{k_0}$ is a cone family. For example, we can start with $\Ecal$ being a cone family, then $k_0=0$ in this case. In order to get the fertile family, we have to use different modifications, which is the goal of the next section.
\end{rmk}

\subsection{Proof for Part (3) Theorem \ref{Main}-Existence of fertile families}
Below we suppose $\Ecal$ is a \emph{cone} family with 
$$
\Ecal|_{B_0}\cong\Ical \oplus \Ocal_{B_0}.
$$
Then we define $\Ecal_{-k}$ inductively as follows 
$$
0\rightarrow \Ecal_{-k-1} \rightarrow \Ecal_{-k} \rightarrow \iota_* \Ocal_{B_0} \rightarrow 0
$$
under the inductive \emph{assumption} that 
$$
\Ecal_{-k}|_{B_0}\cong\Ocal_{B_0} \oplus \Ical.
$$ 
Thus $\Ecal_{-1}$ is always well-defined. 

\begin{rmk}
The key conclusion below is that $\Ecal_{-1}$ forms the same singularity as $\Ecal$ so that by induction, we can always keep modifying the family without changing the singularity if it is a cone family.
\end{rmk}

By Corollary \ref{Cor3.9}, we know the semistable extension $\widehat{\Ecal_{-k}}$ satisfies 
$$
\widehat{\Ecal_{-k}}|_{\PBbb^2}\cong\Ocal_{\PBbb^2} \oplus \underline{\Ical}
$$
for some fixed sheaf $\underline{\Ical}$. By Lemma \ref{Lemma3.7}, we can also write 
$$
0\rightarrow \Ocal_{\widehat{B}} \rightarrow \widehat{\Ecal_{-k}} \rightarrow \widehat{\Qcal_{-k}} \rightarrow 0
$$
which if pushed down to $B$ gives 
$$
0\rightarrow \Ocal_{B} \rightarrow \Ecal_{-k} \rightarrow \Qcal_{-k} \rightarrow 0
$$
since $R^1p_* \Ocal_{\widehat{B}}=0$. We also know the first short exact sequence above over $\widehat{B}$ restricts to a splitting exact sequence over $\widehat{B_0}$ and we fix a splitting map for it as 
$$
\widehat{\Ecal_{-k}}|_{\widehat{B_0}} \rightarrow \Ocal_{\widehat{B_0}},
$$
which restricts to the natural pull-back over $\widehat{B_0}$ of the projection map 
$$\Ecal|_{\widehat{B_0}} \rightarrow \Ocal_{B_0}$$ 
given by the splitting we fixed in the above.

Consider
$$
0 \rightarrow \widehat{\Ecal_{-k}}' \rightarrow \widehat{\Ecal_{-k}} \rightarrow \iota_* \Ocal_{\widehat{B_0}}\rightarrow 0.
$$
which if pushed down to $B$ gives 
$$
0\rightarrow p_* \widehat{\Ecal_{-k}}' \rightarrow \Ecal_{-k} \rightarrow \iota_* \Ocal_{B_0} \rightarrow 0
$$

\begin{lem}
$p_* \widehat{\Ecal_{-k}}'\cong \Ecal_{-k-1}$.
\end{lem}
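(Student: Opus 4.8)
The plan is to identify both $p_*\widehat{\Ecal_{-k}}'$ and $\Ecal_{-k-1}$ with the kernel of a surjection $\Ecal_{-k}\twoheadrightarrow\iota_*\Ocal_{B_0}$, and then to show that the two surjections have the same kernel. Pushing the defining sequence of $\widehat{\Ecal_{-k}}'$ down to $B$ gives, as displayed above,
$$
0\to p_*\widehat{\Ecal_{-k}}'\to \Ecal_{-k}\xrightarrow{\varphi}\iota_*\Ocal_{B_0}\to 0;
$$
here $p_*\widehat{\Ecal_{-k}}\cong\Ecal_{-k}$ by Lemma \ref{Thm1.5} (using $\widehat{\Ecal_{-k}}|_{\PBbb^1}\cong\Ocal_{\PBbb^1}^{\oplus 2}$), $p_*\iota_*\Ocal_{\widehat{B_0}}=\iota_*\Ocal_{B_0}$ because $p\circ\iota\colon\widehat{B_0}\to B_0$ is the blow-up of a smooth point on a surface (so $R^q(p\iota)_*\Ocal_{\widehat{B_0}}=0$ for $q\ge 1$), and right-exactness holds because $\Ocal_{\widehat{B}}\xrightarrow{\widehat{s_k}}\widehat{\Ecal_{-k}}\to\iota_*\Ocal_{\widehat{B_0}}$ is the canonical surjection (by the compatibility of the chosen splitting with $\widehat{s_k}$) and stays a surjection after applying $p_*$. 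On the other hand, by definition $\Ecal_{-k-1}=\ker\psi$, where $\psi\colon\Ecal_{-k}\to\iota_*\Ocal_{B_0}$ is restriction to $B_0$ followed by the projection onto the $\Ocal_{B_0}$ summand of the fixed splitting $\Ecal_{-k}|_{B_0}\cong\Ocal_{B_0}\oplus\Ical$. So it suffices to show $\ker\varphi=\ker\psi$ as subsheaves of $\Ecal_{-k}$.

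Next I would check that $\varphi$ and $\psi$ agree on $B\setminus\{0\}$. On $B\setminus B_0$ both vanish, their target being supported on $B_0$. Over $B_0\setminus\{0\}$ the map $p$ is an isomorphism identifying $B_0\setminus\{0\}$ with $\widehat{B_0}\setminus\PBbb^1$, and under this identification $\varphi$ is the restriction of the map $\widehat{\Ecal_{-k}}\to\iota_*(\widehat{\Ecal_{-k}}|_{\widehat{B_0}})\to\iota_*\Ocal_{\widehat{B_0}}$ defining $\widehat{\Ecal_{-k}}'$; by the compatibility recorded in the construction, the second arrow restricts over $\widehat{B_0}$ to the pull-back of the fixed projection $\Ecal_{-k}|_{B_0}\to\Ocal_{B_0}$, so there $\varphi$ agrees with $\psi$. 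Hence $\ker\varphi$ and $\ker\psi$ coincide inside $\Ecal_{-k}$ over $B\setminus\{0\}$.

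To conclude, note that $\iota_*\Ocal_{B_0}$ is pure of dimension two, hence has no nonzero subsheaf supported at $\{0\}$. Writing $K=p_*\widehat{\Ecal_{-k}}'=\ker\varphi$ and $K'=\Ecal_{-k-1}=\ker\psi$, the image of $K'$ in $\Ecal_{-k}/K\cong\iota_*\Ocal_{B_0}$ is a subsheaf supported at $\{0\}$ (because $K=K'$ off $0$), hence is zero, so $K'\subseteq K$; by symmetry $K=K'$, that is $p_*\widehat{\Ecal_{-k}}'\cong\Ecal_{-k-1}$. One could also finish as in Corollary \ref{Lem3.4}, noting that $p_*\widehat{\Ecal_{-k}}'$ is reflexive and agrees with the reflexive sheaf $\Ecal_{-k-1}$ off the codimension-three set $\{0\}$; the only new input compared with Corollary \ref{Lem3.4} is the agreement along $B_0\setminus\{0\}$ established in the previous paragraph.

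I expect the delicate step to be that last identification of the two modifications. The point is that now the modification happens along the \emph{divisor} $B_0$ rather than in codimension two, so $p_*\widehat{\Ecal_{-k}}'$ and $\Ecal_{-k-1}$ differ from $\Ecal_{-k}$ all along $B_0$ and one must check that these two differences are literally the same; this is the reason the construction fixes a splitting of $\widehat{\Ecal_{-k}}|_{\widehat{B_0}}$ that is compatible both with $\widehat{s_k}$ and with the pull-back of the splitting of $\Ecal_{-k}|_{B_0}$. A smaller but genuine geometric input is the vanishing $R^q(p\iota)_*\Ocal_{\widehat{B_0}}=0$ for $q\ge1$ (the blow-up of a smooth surface point is a rational singularity), which is what keeps the pushed-down sequence exact on the right.
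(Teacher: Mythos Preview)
Your proof is correct and, in its reflexivity ending, matches the paper's one-line argument, which simply invokes the same reasoning as Corollary~\ref{Lem3.4} (both sheaves are reflexive and agree off the codimension-three point $0$). You supply considerably more detail than the paper --- in particular the explicit verification that the two projections to $\iota_*\Ocal_{B_0}$ agree on $B_0\setminus\{0\}$ via the chosen compatibility of splittings, and an alternative purity-based finish --- but this is elaboration of the same idea rather than a genuinely different route.
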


\begin{proof}
    This follows from the same argument as Lemma \ref{Lem3.4}.
\end{proof}

\begin{lem}\label{Lemma3.13}
$\widehat{\Ecal_{-k-1}}'|_{\PBbb_2}=\underline{\Qcal}\oplus \Ocal_{\PBbb^2}(-1)$. In particular, the semistable extension $\widehat{\Ecal_{-k-1}}$ of $\Ecal_{-k-1}$ satisfies 
$$
0\rightarrow \Ocal_{\PBbb^2} \rightarrow \widehat{\Ecal_{-k-1}}|_{\PBbb^2} \rightarrow \underline{\Qcal} \rightarrow 0.
$$
\end{lem}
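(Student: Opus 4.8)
The plan is to compute $\widehat{\Ecal_{-k}}'|_{\PBbb^2}$ directly from its defining sequence $0\to\widehat{\Ecal_{-k}}'\to\widehat{\Ecal_{-k}}\to\iota_*\Ocal_{\widehat{B_0}}\to 0$ (this is the sheaf denoted $\widehat{\Ecal_{-k-1}}'$ in the statement, characterized by $p_*\widehat{\Ecal_{-k}}'\cong\Ecal_{-k-1}$), and then to cure the resulting instability by one further elementary modification along $\PBbb^2$, identifying the outcome with the normalized semistable extension of $\Ecal_{-k-1}$. For the first step I would tensor the defining sequence by $\Ocal_{\PBbb^2}$. Since $\widehat{B_0}$ does not contain $\PBbb^2$, a local equation of $\widehat{B_0}$ restricts to a nonzerodivisor on $\PBbb^2$, so $\Tor_1^{\Ocal_{\widehat{B}}}(\iota_*\Ocal_{\widehat{B_0}},\Ocal_{\PBbb^2})=0$ and $\iota_*\Ocal_{\widehat{B_0}}\otimes\Ocal_{\PBbb^2}=\Ocal_{\widehat{B_0}\cap\PBbb^2}=\Ocal_{\PBbb^1}$. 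As $\widehat{\Ecal_{-k}}'$ is reflexive (cf.\ Corollary \ref{Lem3.4}), $\widehat{\Ecal_{-k}}'|_{\PBbb^2}$ is torsion free, so the $\Tor$ long exact sequence reduces to $0\to\widehat{\Ecal_{-k}}'|_{\PBbb^2}\to\widehat{\Ecal_{-k}}|_{\PBbb^2}\xrightarrow{\phi}\Ocal_{\PBbb^1}\to 0$, where $\phi$ is the composite of the restriction $\widehat{\Ecal_{-k}}|_{\PBbb^2}\to\widehat{\Ecal_{-k}}|_{\PBbb^1}$ with the restriction to $\PBbb^1$ of the fixed splitting $\sigma\colon\widehat{\Ecal_{-k}}|_{\widehat{B_0}}\to\Ocal_{\widehat{B_0}}$.

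Next I would identify $\ker\phi$ using the splitting $\widehat{\Ecal_{-k}}|_{\PBbb^2}\cong\Ocal_{\PBbb^2}\oplus\underline{\Qcal}$, where $\underline{\Qcal}:=\widehat{\Qcal_{-k}}|_{\PBbb^2}$ is the ideal sheaf $\underline{\Ical}$ of a zero-dimensional subscheme supported at $[0,0,1]$. Assuming $k(\Ecal)\geq 1$ this subscheme is nonempty, so $H^0(\underline{\Qcal})=0$ and $H^0(\widehat{\Ecal_{-k}}|_{\PBbb^2})$ is one-dimensional; hence the nonzero section $\widehat{s_{-k}}|_{\PBbb^2}$ (it is nonzero because $\sigma\circ(\widehat{s_{-k}}|_{\widehat{B_0}})=\mathrm{id}$) is, after rescaling, the canonical generator of the $\Ocal_{\PBbb^2}$ summand. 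Since $[0,0,1]\notin\PBbb^1$, the inclusion $\underline{\Qcal}\hookrightarrow\Ocal_{\PBbb^2}$ restricts to an isomorphism $\underline{\Qcal}|_{\PBbb^1}\cong\Ocal_{\PBbb^1}$, and $\sigma\circ(\widehat{s_{-k}}|_{\widehat{B_0}})=\mathrm{id}$ forces $\phi|_{\PBbb^1}\colon\Ocal_{\PBbb^1}\oplus\underline{\Qcal}|_{\PBbb^1}\to\Ocal_{\PBbb^1}$ to have the form $(a,b)\mapsto a+\lambda b$ for a constant $\lambda$. Composing $\phi$ with the automorphism $(a,b)\mapsto(a-\lambda b,b)$ of $\Ocal_{\PBbb^2}\oplus\underline{\Qcal}$ (the second coordinate mapped into $\Ocal_{\PBbb^2}$ via the inclusion) makes it the composite $\Ocal_{\PBbb^2}\oplus\underline{\Qcal}\to\Ocal_{\PBbb^2}\to\Ocal_{\PBbb^1}$, whose kernel is the ideal sheaf of the line $\PBbb^1$ in the first factor, namely $\Ocal_{\PBbb^2}(-1)\oplus\underline{\Qcal}$; since an automorphism carries this isomorphically onto $\ker\phi$, we get $\widehat{\Ecal_{-k}}'|_{\PBbb^2}\cong\underline{\Qcal}\oplus\Ocal_{\PBbb^2}(-1)$, the first assertion.

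For the second assertion, $\widehat{\Ecal_{-k}}'$ is a reflexive extension of $\Ecal_{-k-1}$, but $\widehat{\Ecal_{-k}}'|_{\PBbb^2}$ has $c_1=-1$ and is unstable, with maximal destabilizing subsheaf $\underline{\Qcal}$ (of slope $0$) and quotient $\Ocal_{\PBbb^2}(-1)$. Applying the elementary modification of $\widehat{\Ecal_{-k}}'$ along $\underline{\Qcal}$ exactly as in the proof of Proposition \ref{Prop2.6} produces a reflexive sheaf $\widehat{\Ecal_{-k}}''$ that coincides with $\widehat{\Ecal_{-k}}'$ away from $\PBbb^2$, hence still has $p_*\widehat{\Ecal_{-k}}''\cong\Ecal_{-k-1}$, and whose restriction fits in $0\to\Ocal_{\PBbb^2}(-1)\otimes\Ocal_{\PBbb^2}(1)\to\widehat{\Ecal_{-k}}''|_{\PBbb^2}\to\underline{\Qcal}\to 0$, that is $0\to\Ocal_{\PBbb^2}\to\widehat{\Ecal_{-k}}''|_{\PBbb^2}\to\underline{\Qcal}\to 0$. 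This restriction has $c_1=0$ and is semistable: a saturated rank-one subsheaf of positive degree would map either nontrivially, hence injectively, to $\underline{\Qcal}$ (impossible since $\deg\underline{\Qcal}=0$) or trivially, hence into $\Ocal_{\PBbb^2}$ (again impossible). By the uniqueness of the normalized semistable extension (Proposition \ref{BubbleUniqueness}), $\widehat{\Ecal_{-k}}''$ is the normalized semistable extension $\widehat{\Ecal_{-k-1}}$ of $\Ecal_{-k-1}$, so the last displayed sequence is precisely the claimed one.

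The step I expect to be the main obstacle is the middle one: correctly restricting an elementary-modification sequence taken along the divisor $\widehat{B_0}$ to the transverse divisor $\PBbb^2$ (the transversality vanishing of $\Tor_1$ and the identification of $\Ocal_{\widehat{B_0}}\otimes\Ocal_{\PBbb^2}$), then pinning down $\phi$ in terms of the fixed splitting $\sigma$ and the section $\widehat{s_{-k}}$, and verifying that $\widehat{s_{-k}}|_{\PBbb^2}$ really generates the trivial summand, so that the destabilizing subsheaf of $\widehat{\Ecal_{-k}}'|_{\PBbb^2}$ is exactly $\underline{\Qcal}$. Once the splitting $\widehat{\Ecal_{-k}}'|_{\PBbb^2}\cong\underline{\Qcal}\oplus\Ocal_{\PBbb^2}(-1)$ is established, the remainder is the single elementary-modification step already carried out in Proposition \ref{Prop2.6} together with the uniqueness in Proposition \ref{BubbleUniqueness}.
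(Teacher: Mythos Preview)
Your proof is correct and follows essentially the same route as the paper: restrict the defining elementary modification along $\widehat{B_0}$ to $\PBbb^2$, identify the kernel as $\underline{\Qcal}\oplus\Ocal_{\PBbb^2}(-1)$, and then perform one further elementary modification along the destabilizing summand $\underline{\Qcal}$ to reach the normalized semistable extension. The paper's proof is considerably terser (it dispatches the kernel identification with ``by definition''), whereas you make explicit the $\Tor_1$-vanishing for the transverse restriction, the $\lambda$-automorphism that accounts for any discrepancy between the splitting of $\widehat{\Ecal_{-k}}|_{\PBbb^2}$ and the restriction of $\sigma$ to $\PBbb^1$, and the semistability and uniqueness check at the end; these details are welcome and do not alter the strategy.
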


\begin{proof}
By definition, we know 
$$
0 \rightarrow \widehat{\Ecal}'_{-k-1} \rightarrow \widehat{\Ecal}_{-k} \rightarrow \iota_* \Ocal_{\widehat{B_0}}\rightarrow 0.
$$
which restricts to $\PBbb^2$ as 
$$
0 \rightarrow \widehat{\Ecal}'_{-k-1}|_{\PBbb^2} \rightarrow \widehat{\Ecal}_{-k}|_{\PBbb^2} \rightarrow \iota_* \Ocal_{\PBbb^1}\rightarrow 0.
$$
By definition, we know 
$$
\widehat{\Ecal}'_{-k-1}|_{\PBbb_2}\cong \underline{\Qcal}\oplus \Ocal_{\PBbb^2}(-1).
$$
Then the elementary modification of $\widehat{\Ecal_{-k-1}}'$ along $\underline{\Qcal}$ gives the optimal extension $\widehat{\Ecal_{-k-1}}$.
\end{proof}

This gives the following \emph{key} property.

\begin{cor}\label{Cor3.9}
Suppose $\Ecal_{-k}$ is a cone family, then the singularity formation remains the same for $\Ecal_{-k-1}$, i.e.,
$$
\Ecal_{-k-1}|_{B_0}\cong \Ocal_{B_0} \oplus \Ical.
$$
\end{cor}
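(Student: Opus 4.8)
The plan is to reduce the claim to a statement about $\Ecal_{-k-1}|_{B_0}$ as an extension on the smooth surface $B_0$, and then to single out the correct extension using the multiplicity, which is controlled from the blow-up side. First I would restrict the defining sequence $0\to\Ecal_{-k-1}\to\Ecal_{-k}\to\iota_*\Ocal_{B_0}\to 0$ to $B_0$, i.e.\ apply $-\otimes_{\Ocal_B}\Ocal_{B_0}$. Since $\Ecal_{-k}$ is reflexive, hence torsion free, $z$ is a nonzerodivisor on it and $\Tor_1^{\Ocal_B}(\Ecal_{-k},\Ocal_{B_0})=0$, whereas $\Tor_1^{\Ocal_B}(\iota_*\Ocal_{B_0},\Ocal_{B_0})\cong\Ocal_{B_0}$ by the Koszul resolution of $B_0=\{z=0\}$; moreover the map $\Ecal_{-k}|_{B_0}\to\Ocal_{B_0}$ induced by $\Ecal_{-k}\to\iota_*\Ocal_{B_0}$ is, by construction, the projection onto the $\Ocal_{B_0}$-summand of the fixed splitting $\Ecal_{-k}|_{B_0}\cong\Ocal_{B_0}\oplus\Ical$, so it is surjective with kernel $\Ical$. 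The long exact $\Tor$-sequence then yields
$$
0\longrightarrow\Ocal_{B_0}\xrightarrow{\ \sigma\ }F\longrightarrow\Ical\longrightarrow 0,\qquad F:=\Ecal_{-k-1}|_{B_0},
$$
and $F$, being an extension of a sheaf locally free off $0$ by a locally free sheaf, is locally free away from $0\in B_0$. It remains to show this sequence splits.

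Second I would obtain the bound $k(\Ecal_{-k-1})\ge k(\Ecal)$ from the blow-up. By Lemma \ref{Lemma3.13} the normalized semistable extension $\widehat{\Ecal_{-k-1}}$ has $\widehat{\Ecal_{-k-1}}|_{\PBbb^2}$ semistable of degree $0$, fitting in $0\to\Ocal_{\PBbb^2}\to\widehat{\Ecal_{-k-1}}|_{\PBbb^2}\to\underline{\Ical}\to 0$ with $\underline{\Ical}$ the very ideal sheaf of points appearing in the cone $\widehat{\Ecal_{-k}}|_{\PBbb^2}\cong\Ocal_{\PBbb^2}\oplus\underline{\Ical}$ of $\Ecal_{-k}$ (this identification $\underline{\Qcal}\cong\underline{\Ical}$ being read off the cone structure of $\Ecal_{-k}$ used in the proof of Lemma \ref{Lemma3.13}). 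A Chern-character computation on $\PBbb^2$ gives $c_2(\widehat{\Ecal_{-k-1}}|_{\PBbb^2})=\operatorname{length}(\Ocal_{\PBbb^2}/\underline{\Ical})=c_2(\widehat{\Ecal_{-k}}|_{\PBbb^2})$, and since $\Ecal_{-k}$ is a cone family it satisfies triviality at infinity, so by Lemma \ref{Thm1.5} this equals $k(\Ecal_{-k})$, which is $k(\Ecal)=:\ell$ by the inductive hypothesis $\Ecal_{-k}|_{B_0}\cong\Ocal_{B_0}\oplus\Ical$. Applying Theorem \ref{Thm1.1}(2) to $\widehat{\Ecal_{-k-1}}$ then gives $\ell=c_2(\widehat{\Ecal_{-k-1}}|_{\PBbb^2})\le k(\Ecal_{-k-1})$.

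Third I would run the local argument. As $F$ is torsion free of rank two with $c_1=0$ on the smooth contractible Stein surface $B_0$, its reflexive hull is locally free, hence $F^{**}\cong\Ocal_{B_0}^{\oplus 2}$; view $\sigma$ as a section of $F^{**}$ and set $\mathcal{J}:=\coker\bigl(\sigma\colon\Ocal_{B_0}\to F^{**}\bigr)$. Then $\Ical=F/\sigma(\Ocal_{B_0})$ sits inside $\mathcal{J}$ with $\mathcal{J}/\Ical\cong F^{**}/F$ of finite length $k(\Ecal_{-k-1})$, and torsion-freeness of $\Ical$ forces $\mathcal{J}$ to be torsion free, hence an ideal sheaf of points in $\mathcal{J}^{**}\cong\Ocal_{B_0}$ of some colength $m\ge 0$, with $m=0$ exactly when $\sigma$ is nowhere vanishing. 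Comparing colengths along $\Ical\subset\mathcal{J}\subset\Ocal_{B_0}$ gives $\ell=m+k(\Ecal_{-k-1})$, so $k(\Ecal_{-k-1})=\ell-m\le\ell$; together with the previous step this forces $m=0$ and $k(\Ecal_{-k-1})=\ell$. Hence $\sigma$ is a subbundle; since $\Ext^1_{\Ocal_{B_0}}(\Ocal_{B_0},\Ocal_{B_0})=H^1(B_0,\Ocal_{B_0})=0$, the line subbundle $\sigma(\Ocal_{B_0})$ splits off, $F^{**}=\sigma(\Ocal_{B_0})\oplus L$ with $L\cong\Ocal_{B_0}$, and then $F=\sigma(\Ocal_{B_0})\oplus(F\cap L)\cong\Ocal_{B_0}\oplus\Ical$.

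I expect the main obstacle to be the last step. Even granting the exact sequence of the first step, $\Ext^1_{\Ocal_{B_0}}(\Ical,\Ocal_{B_0})$ is nonzero at $0$ and there are genuinely non-split (indeed locally free) extensions of $\Ical$ by $\Ocal_{B_0}$, so $F$ is not determined by that sequence alone; the inequality $k(\Ecal_{-k-1})\ge\ell$ coming from the semistable extension over $\PBbb^2$ is exactly what rules out every extension but $\Ocal_{B_0}\oplus\Ical$. One should also keep track of left-exactness of the restrictions used to $B_0$ and $\PBbb^2$ (valid since both are cut out by nonzerodivisors on the torsion-free sheaves involved), and note that the argument in addition shows $\Ecal_{-k-1}$ is non-barren, in fact again a cone family.
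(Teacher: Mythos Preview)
Your argument is correct and follows essentially the same strategy as the paper: derive the extension $0\to\Ocal_{B_0}\to\Ecal_{-k-1}|_{B_0}\to\Ical\to 0$, use Lemma~\ref{Lemma3.13} on the blow-up to control the multiplicity, and then show locally that $\sigma$ is a subbundle so the sequence splits. The only difference is in the middle step: the paper observes that the semistable extension produced by Lemma~\ref{Lemma3.13} already restricts trivially to $\PBbb^1$ (since $\underline{\Ical}$ is supported at $[0,0,1]\notin\PBbb^1$), so Proposition~\ref{BubbleCharaterization} gives the \emph{equality} $k(\Ecal_{-k-1})=c_2(\widehat{\Ecal_{-k-1}}|_{\PBbb^2})=k(\Ecal)$ in one stroke; you instead invoke only the inequality of Theorem~\ref{Thm1.1}(2) and recover the reverse inequality from your colength computation $\ell=m+k(\Ecal_{-k-1})$. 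Both routes land on the same endgame.

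Two small points. First, the sentence ``torsion-freeness of $\Ical$ forces $\Jcal$ to be torsion free'' is not true for an arbitrary extension of a finite-length sheaf by $\Ical$; what makes it true here is that $\Jcal=\coker(\sigma\colon\Ocal_{B_0}\to F^{**})$ with $F^{**}$ locally free, so $\Jcal$ has depth $\ge 1$ at every point and hence any torsion would be pure of dimension one, contradicting that it injects into the zero-dimensional sheaf $F^{**}/F$. Second, your closing remark that $\Ecal_{-k-1}$ is ``in fact again a cone family'' is not correct: the argument only shows $\Ecal_{-k-1}$ is non-barren, and indeed the next proposition in the paper relies on the possibility that $\Ecal_{-k-1}$ is fertile.
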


\begin{proof}
   By Lemma \ref{Lemma3.13}, we know 
    $$
    \Delta(\Ecal_{-k-1})=\Delta(\Ecal_{-k}).
    $$
    By Proposition \ref{BubbleCharaterization}, 
    $$
    k(\Ecal_{-k-1})=\Delta_{\Ecal_{-k-1}}=\Delta_{\Ecal_{-k}}=\dim_{\CBbb} \Ical^{**}/\Ical
    $$
    This forces the natural exact sequence 
    $$
    0\rightarrow \Ocal_{B_0} \rightarrow \Ecal_{-k-1}|_{B_0} \rightarrow \Ical \rightarrow 0
    $$
    to split, which follows from that the natural induced map 
    $$
    \Ocal_{B_0} \rightarrow \Ecal_{-k-1}|_{B_0}^{**}
    $$
    is nowhere vanishing as a bundle map. Indeed, we can fix any surjective map $\Ecal_{-k-1}|_{B_0}^{**}\rightarrow \Ocal_{B_0}$ which restricts to identity on $\Ocal_{B_0}$. Then this map restricts to $\Ecal_{-k-1}|_{B_0}\rightarrow \Ocal_{B_0}$ that splits the sequence above. Now we show the natural induced map is a bundle map indeed. For this, by assumption, the following natural induced map
    $$
    \Ecal_{-k-1}|_{B_0}^{**}/\Ecal_{-k-1}|_{B_0} \rightarrow \Ical^{**}/\Ical
    $$
    is an isomorphism by dimensional reasons. This implies the map 
    $$
    \Ecal_{-k-1}|_{B_0}^{**} \rightarrow \Ical^{**}
    $$
    is surjective, thus the map $\Ocal_{B_0} \rightarrow \Ecal_{-k-1}|_{B_0}^{**}$ is injective as a bundle map. This finishes the proof. 
\end{proof}

Suppose $\Ecal$ is a cone family, we can then inductively conclude the following 
\begin{prop}
There exists a unique $k_0$ so that $\Ecal_{-k_0}$  is a fertile family forming the singularity $\Ocal_{B_0} \oplus \Ical$. In particular, for $k < k_0$, $\Ecal_{-k}$ are all cone families.
\end{prop}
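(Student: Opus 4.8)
The plan is to run the inductive construction of the $\Ecal_{-k}$ as far as it can go, check that every term it produces is non-barren, and then argue that it cannot stay among cone families indefinitely.

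First I would propagate the inductive hypothesis. Assume $\Ecal_0=\Ecal,\Ecal_{-1},\dots,\Ecal_{-k}$ have been built and are all cone families forming the singularity $\Ocal_{B_0}\oplus\Ical$. Then $\Ecal_{-k}|_{B_0}\cong\Ocal_{B_0}\oplus\Ical$, so $\Ecal_{-k-1}$ is defined, and by Corollary \ref{Cor3.9} it again forms $\Ocal_{B_0}\oplus\Ical$; in particular $k(\Ecal_{-k-1})=k(\Ecal)$. Since every previous term is a cone family, Lemma \ref{Lemma3.13} applies at each of the preceding steps and the discriminant is constant along the chain, so $\Delta_{\Ecal_{-k-1}}=\Delta_{\Ecal_{-k}}=\dots=\Delta_{\Ecal}=4k(\Ecal)$, where the last equality is Proposition \ref{BubbleCharaterization} applied to the cone family $\Ecal$. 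As $4k(\Ecal)=4k(\Ecal_{-k-1})$, Proposition \ref{BubbleCharaterization} (the converse direction) yields $\widehat{\Ecal_{-k-1}}|_{\PBbb^1}\cong\Ocal_{\PBbb^1}^{\oplus 2}$, so $\Ecal_{-k-1}$ is non-barren, hence either a cone family or a fertile family. Thus, as long as every term constructed so far is a cone family, the next term is well-defined, forms the same singularity, and is non-barren.

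Granting this, I would define $k_0$ to be the least $k\geq 0$ with $\Ecal_{-k}$ fertile. For $0\leq k<k_0$ the family $\Ecal_{-k}$ is non-barren by the previous step and not fertile, hence a cone family --- this is the ``in particular'' clause --- and $\Ecal_{-k_0}$ still forms $\Ocal_{B_0}\oplus\Ical$ by applying Corollary \ref{Cor3.9} $k_0$ times; since the construction is halted as soon as the first fertile family appears, this also gives uniqueness of $k_0$. Everything therefore reduces to proving $k_0<\infty$, i.e.\ that the chain $\cdots\subset\Ecal_{-2}\subset\Ecal_{-1}\subset\Ecal$ of cone families must be finite; this is the step I expect to be the main obstacle. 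To prove it I would look for a non-negative integer attached to $\Ecal_{-k}$ that strictly decreases at each modification step while the family is a cone family and that detects fertility. A natural candidate is $\ell_{-k}:=\dim_{\CBbb}\big((\widehat{\Ecal_{-k}}|_{\PBbb^2})^{**}/\widehat{\Ecal_{-k}}|_{\PBbb^2}\big)$: from the sequence $0\to\Ocal_{\PBbb^2}\to\widehat{\Ecal_{-k}}|_{\PBbb^2}\to\underline{\Ical}\to 0$ of Lemma \ref{Lemma3.13} together with semistability of the bubble, one checks $\ell_{-k}=k(\Ecal)$ exactly when $\widehat{\Ecal_{-k}}|_{\PBbb^2}\cong\Ocal_{\PBbb^2}\oplus\underline{\Ical}$, i.e.\ exactly when $\Ecal_{-k}$ is a cone family, and $\ell_{-k}<k(\Ecal)$ exactly when $\Ecal_{-k}$ is fertile, so it would suffice to show $\ell_{-k}$ cannot equal $k(\Ecal)$ for every $k$. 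Since then $\widehat{\Ecal_{-k}}|_{\PBbb^2}$ and $\Ecal_{-k}|_{B_z}$ ($z\neq 0$) are independent of $k$, all the change between consecutive terms is carried by the section data, i.e.\ by the sequence $0\to\Ocal_{\widehat B}\to\widehat{\Ecal_{-k}}\to\widehat{\Qcal_{-k}}\to 0$ of Lemma \ref{Lemma3.7} and its push-down $0\to\Ocal_B\to\Ecal_{-k}\to\Qcal_{-k}\to 0$; I would then track the extension class of $\widehat{\Ecal_{-k}}|_{\PBbb^2}$ in $\Ext^1(\underline{\Ical},\Ocal_{\PBbb^2})$ through the explicit two-step modification of Lemma \ref{Lemma3.13} and show that it is forced to become non-zero after finitely many steps, contradicting that $\Ecal_{-k}$ is a cone family for all $k$. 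Once termination is established the remaining assertions are essentially bookkeeping on Corollary \ref{Cor3.9}, Lemma \ref{Lemma3.13}, Lemma \ref{Lemma3.7}, and Proposition \ref{BubbleCharaterization}.
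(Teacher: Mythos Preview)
Your setup through the definition of $k_0$ is fine and matches the paper's framework: the inductive propagation via Corollary~\ref{Cor3.9} and Lemma~\ref{Lemma3.13}, and the reduction of everything to the termination statement $k_0<\infty$, are exactly right. The gap is in your proposed termination argument.

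The invariant $\ell_{-k}$ you introduce does not decrease while the family is a cone family: as you yourself observe, $\ell_{-k}=k(\Ecal)$ for every cone term, so it carries no dynamical information along the chain. Your fallback, tracking the extension class of $\widehat{\Ecal_{-k}}|_{\PBbb^2}$ in $\Ext^1(\underline{\Ical},\Ocal_{\PBbb^2})$, is circular: under the standing assumption that every $\Ecal_{-k}$ is a cone family, that class is \emph{zero} for all $k$ by definition, so there is nothing to ``force to become non-zero''; you would need an independent mechanism explaining why the class must eventually jump, and none is given. The underlying issue is that the restriction to $\PBbb^2$ sees only leading-order data and is literally the same sheaf $\Ocal_{\PBbb^2}\oplus\underline{\Ical}$ at every cone step, so any invariant computed purely from $\widehat{\Ecal_{-k}}|_{\PBbb^2}$ is constant along the chain and cannot detect termination.

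The paper's argument avoids this by working on $B$ rather than on the exceptional divisor. It fixes a presentation
\[
0\rightarrow \Rcal \rightarrow \Ocal_B^{\oplus n}\oplus\Ocal_B \xrightarrow{(\sigma,s)} \Ecal \rightarrow 0
\]
where $s$ generates the $\Ocal_{B_0}$ factor on $B_0$, and shows by induction that the successive modifications along $\Ical$ are obtained by replacing the last summand $\Ocal_B$ by $(z^k)$, i.e.\ $\Ecal_{-k}\cong\bigl(\Ocal_B^{\oplus n}\oplus(z^k)\bigr)/\Rcal$. If every $\Ecal_{-k}$ were a cone family this would hold for all $k$, forcing $\Rcal\subset\Ocal_B^{\oplus n}$ and hence $\Ecal\cong\Ocal_B\oplus\Qcal$, contradicting that $\Ecal$ is genuinely singular at $0$. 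This is the missing idea: the obstruction to an infinite chain of cone families is global on $B$ (a splitting of $\Ecal$), not visible on $\PBbb^2$.
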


\begin{proof}
It suffices to show that $\Ecal_{-k_0}$ are not cone families for some $k_0$. We argue by contradiction. Otherwise, $\Ecal_{-k}$ are all cone families. The original cone family lies in the following exact sequence 
$$
0\rightarrow \Ocal_{B} \xrightarrow{s} \Ecal \rightarrow \Qcal \rightarrow 0
$$
which restricts to 
$$
0\rightarrow \Ocal_{B_0} \xrightarrow{s|_{B_0}} \Ecal|_{B_0} \rightarrow \Ical \rightarrow 0
$$
that splits as $\Ecal|_{B_0} \cong \Ocal_{B_0} \oplus \Ical.$ We can then represent $\Ecal$ in the following exact sequence 
$$
0\rightarrow \Rcal \rightarrow \Ocal_{B}^{\oplus n} \oplus \Ocal_{B} \xrightarrow{(\sigma, s)} \Ecal \rightarrow 0
$$
for some choice of sections $\sigma$ of $\Ecal$ that generate $\Qcal$. By definition, the elementary modification $\Ecal_{-1}$ of $\Ecal$ along the $\Ical$ lies in  the following commutative diagram 
\[\begin{tikzcd}
	0 & {\Ocal_B} & {\Ocal_B^{\oplus n}\oplus \Ocal } & \Ecal & 0 \\
	0 & {\Ocal_B} & {\Ocal_B^{\oplus n}\oplus (z)} & {\Ecal_{-1}} & 0
	\arrow[from=1-1, to=1-2]
	\arrow[from=1-2, to=1-3]
	\arrow["{(\sigma, s)}", from=1-3, to=1-4]
	\arrow[from=1-4, to=1-5]
	\arrow[from=2-1, to=2-2]
	\arrow["{=}"', from=2-2, to=1-2]
	\arrow[from=2-2, to=2-3]
	\arrow[from=2-3, to=1-3]
	\arrow["{(\sigma, s)}", from=2-3, to=2-4]
	\arrow[from=2-4, to=1-4]
	\arrow[from=2-4, to=2-5]
\end{tikzcd}\]
which gives 
$$
0\rightarrow (z) \xrightarrow{s} \Ecal_{-1} \rightarrow \Qcal \rightarrow 0.
$$
By our assumption, the restriction
$$
0\rightarrow (z)/(z^2) \rightarrow \Ecal_{-1}|_{B_0} \rightarrow \Ical \rightarrow 0
$$
splits. $\Ecal_{-2}$ is then defined by picking a projection of $\Ecal_{-1}|_{B_0}$ to the $(z)/(z^2)$ factor which restricts to identity as map from $(z)/(z^2)$ to $(z)/(z^2)$, i.e., a short exact sequence 
$$
\Ecal_{-1} \rightarrow \iota_* (z)/(z^2) \rightarrow 0
$$
so that the composition map 
$$(z)/(z^2) \rightarrow \Ecal_{-1}|_{B_0} \rightarrow (z)/(z^2)$$
is the identity map. Then by definition, we know 
$$
\Ecal_{-2} \cong \Ocal_{B}^{\oplus n}\oplus (z^2)/\Rcal.
$$
By induction, we know 
$$
\Ecal_{-k} \cong \Ocal_{B}^{\oplus n}\oplus (z^k)/\Rcal
$$
for any $k$. In particular, $\Rcal\subset \Ocal_{B}^{\oplus n}$, thus 
$$
\Ecal\cong \Ocal_{B} \oplus \Qcal
$$
which contradicts to $\Ecal$ having an essential singularity at $0$. The conclusion follows.
\end{proof}

\section{Bubbling with low multiplicities}
In this section, we study bubble trees for singularity formation with low multiplicities.
\subsection{Bubbling of multiplicity one}

The simplest singularity formed for rank two bundles over surfaces is those of multiplicity one. Even in this case, as we have noted already (see the proof of Proposition \ref{BarrenFamily}),  a family could still be barren. 

The following is straightforward.
\begin{lem}
    Suppose $\Ecal$ is a family of sheaves forming a singularity with $k(\Ecal)=1$, then 
    $$
    \Ecal|_{B_0}\cong \Ocal \oplus \Ical
    $$
    where $\Ical \cong (x,y)$.
\end{lem}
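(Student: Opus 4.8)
The plan is to restrict everything to the central fiber and analyze the torsion-free sheaf $\Fcal := \Ecal|_{B_0}$ on the surface $B_0$, which is an open ball in $\CBbb^2$. Since $\Ecal$ is locally free on $B \setminus \{0\}$, the restriction $\Fcal$ is locally free on $B_0 \setminus \{0\}$, so in the canonical sequence $0 \to \Fcal \to \Fcal^{**} \to \tau \to 0$ the sheaf $\tau$ is torsion supported at the single point $0$; by definition $k(\Ecal) = \dim_{\CBbb} \tau_0 = 1$, and the only length-one module over the local ring at $0$ is the residue field, so $\tau \cong \Ocal_{B_0}/(x,y)$.

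Next I would identify $\Fcal^{**}$. It is a rank-two reflexive sheaf on the smooth surface $B_0$, hence locally free, i.e.\ a holomorphic vector bundle; since $B_0$ is a contractible Stein manifold (an open ball), every such bundle is trivial, so $\Fcal^{**} \cong \Ocal_{B_0}^{\oplus 2}$. This reduces the problem to classifying kernels of surjections $\varphi : \Ocal_{B_0}^{\oplus 2} \twoheadrightarrow \Ocal_{B_0}/(x,y)$. Such a $\varphi$ is an element of $\Hom(\Ocal_{B_0}^{\oplus 2}, \Ocal_{B_0}/(x,y)) \cong \CBbb^2$, necessarily nonzero, and the subgroup $\GL_2(\CBbb) \subseteq \Aut(\Ocal_{B_0}^{\oplus 2})$ acts transitively on the nonzero elements; so after an automorphism of $\Ocal_{B_0}^{\oplus 2}$ we may assume $\varphi(s,t) = s(0)$. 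Then $\Fcal \cong \ker \varphi = \{(s,t) : s \in (x,y)\} = (x,y) \oplus \Ocal_{B_0}$, which is the asserted decomposition $\Ocal \oplus \Ical$ with $\Ical \cong (x,y)$.

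There is no genuine obstacle here, which is exactly why the statement is labelled straightforward. The only two points worth a sentence of justification are the triviality of $\Fcal^{**}$ on the ball (Grauert's Oka principle, or just contractibility of $B_0$) and the transitivity of the automorphism group on length-one quotients of the trivial bundle supported at $0$; both are elementary. If one prefers to avoid invoking global triviality, one can instead run the kernel computation in the regular local ring $\Ocal_{B_0,0}$ of dimension two and then spread the resulting local isomorphism out over $B_0 \setminus \{0\}$ — but invoking triviality of the bundle directly is shorter and cleaner.
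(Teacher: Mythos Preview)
Your argument is correct and is precisely the natural way to verify the lemma; the paper itself gives no proof, merely noting that the statement is ``straightforward'', so there is nothing substantive to compare against. Your identification of $\Fcal^{**}\cong\Ocal_{B_0}^{\oplus 2}$ via Grauert's Oka principle and the transitivity of $\GL_2(\CBbb)$ on length-one quotients fills in exactly the details the paper leaves implicit.
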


Thus, we can assume that the family forms the singularity $\Ocal_{B_0} \oplus (x,y)$. As expected, this corresponds to the basic one instanton in gauge theory (\cite{DonaldsonKronheimer:90}).

\begin{lem}\label{BasicOne}
    The family
    $$
    0\rightarrow \Ocal \xrightarrow{\begin{pmatrix}
    x\\
    y\\
    z^n
    \end{pmatrix}} \Ocal^{\oplus 3} \rightarrow \Ecal \rightarrow 0
    $$
    is fertile if and only if $n=2.$
\end{lem}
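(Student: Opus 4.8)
The plan is to run everything through Theorem~\ref{Thm1.1}: compute the multiplicity $k(\Ecal)$, then write down the normalized semistable extension on the blow-up $\widehat{B}$ explicitly and read off whether $\Delta_{\Ecal}=4k(\Ecal)$ (triviality at infinity) and whether the bubble has trivial double dual. I expect three regimes — $n=1$ barren, $n=2$ fertile, $n\ge 3$ a cone family — so that $\Ecal$ is fertile exactly when $n=2$.

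First, since $z^{n}|_{z=0}=0$ for $n\ge 1$, restricting the defining sequence to $B_0$ gives $\Ecal|_{B_0}\cong\Ocal_{B_0}\oplus(x,y)$, hence $k(\Ecal)=1$ for all $n$ and the threshold of Proposition~\ref{BubbleCharaterization} is $\Delta_{\Ecal}=4$. Fix homogeneous coordinates $[\xi_0:\xi_1:\xi_2]$ on $\PBbb^2=p^{-1}(0)$ dual to $(x,y,z)$ and put $p_0:=[0:0:1]$. Viewing $p^{*}x,p^{*}y,p^{*}z^{n}$ as elements of $\Gamma(\Ocal_{\widehat{B}}(-1))=\Hom(\Ocal_{\widehat{B}}(1),\Ocal_{\widehat{B}})$, they assemble into a map $\phi_n$ and a reflexive extension
\[
0\rightarrow \Ocal_{\widehat{B}}(1)\xrightarrow{\ \phi_n\ }\Ocal_{\widehat{B}}^{\oplus 3}\rightarrow\widehat{\Ecal}_n\rightarrow 0 ,
\]
which in the chart $\{\xi_2\ne0\}$, with coordinates $(z,\xi_0,\xi_1)$ and $\PBbb^2=\{z=0\}$, is $\phi_n=(\xi_0,\xi_1,z^{n-1})$. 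A local Koszul check shows $\widehat{\Ecal}_n$ is reflexive, locally free away from $p_0$, and locally free everywhere iff $n=1$; restricting the sequence to $\PBbb^2$ (still exact, $\widehat{\Ecal}_n$ being torsion free) gives $\widehat{\Ecal}_1|_{\PBbb^2}\cong\Tcal_{\PBbb^2}(-1)$ (the twisted Euler sequence) and $\widehat{\Ecal}_n|_{\PBbb^2}\cong\Ocal_{\PBbb^2}\oplus\Ical_{p_0}(1)$ for $n\ge2$, both of discriminant $3$. When $n=1$, $\Tcal_{\PBbb^2}(-1)$ is stable, so $\widehat{\Ecal}_1$ is already the normalized semistable extension and $\Delta_{\Ecal}=3\ne4=4k(\Ecal)$; by Proposition~\ref{BubbleCharaterization} triviality at infinity fails and $\Ecal$ is barren.

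For $n\ge2$, $\widehat{\Ecal}_n|_{\PBbb^2}$ is unstable with maximal destabilizing subsheaf $\Ical_{p_0}(1)$; the elementary modification along the quotient $\Ocal_{\PBbb^2}$, as in the proof of Proposition~\ref{Prop2.6}, yields $\widehat{\Ecal}_n'$ with $0\to\Ocal_{\PBbb^2}(1)\to\widehat{\Ecal}_n'|_{\PBbb^2}\to\Ical_{p_0}(1)\to0$ and $\Delta(\widehat{\Ecal}_n'|_{\PBbb^2})=3+2(1-0)-1=4$. By Theorem~\ref{Thm1.1}(2) this is the maximal discriminant, so $\widehat{\Ecal}_n'|_{\PBbb^2}$ is semistable by Proposition~\ref{Prop2.6}; thus $\widehat{\Ecal}_n'$ is, after the normalizing twist by $\Ocal_{\widehat{B}}(1)$, the semistable extension, $\Delta_{\Ecal}=4=4k(\Ecal)$, and triviality at infinity holds by Proposition~\ref{BubbleCharaterization}. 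The crux is to identify $\widehat{\Ecal}_n'$ near $p_0$: the modified sheaf is the subsheaf of local sections whose last coordinate lies in $(z)$, and rescaling that coordinate identifies it, in the chart $\{\xi_2\ne0\}$, with $\coker\!\big(\Ocal\xrightarrow{(\xi_0,\xi_1,z^{n-2})}\Ocal^{\oplus3}\big)$ — locally free exactly when $n=2$ (the last entry is then a unit) and not locally free at $p_0$ for $n\ge3$. Since local freeness of a torsion free sheaf at $p_0$ is equivalent to that of its restriction to $\PBbb^2$, the normalized bubble $\Fcal_n:=\widehat{\Ecal}_n'|_{\PBbb^2}\otimes\Ocal_{\PBbb^2}(-1)$, which sits in $0\to\Ocal_{\PBbb^2}\to\Fcal_n\to\Ical_{p_0}\to0$, is locally free — hence the nonsplit extension, using $\Ext^1_{\PBbb^2}(\Ical_{p_0},\Ocal_{\PBbb^2})\cong\CBbb$ — when $n=2$, and the split one $\Ocal_{\PBbb^2}\oplus\Ical_{p_0}$ when $n\ge3$. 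For $n=2$ this makes $\Fcal_2$ a rank two bundle with $c_1=0$, $c_2=1$, so $\Fcal_2^{**}=\Fcal_2\ncong\Ocal_{\PBbb^2}^{\oplus2}$ and, together with triviality at infinity, $\Ecal$ is fertile; for $n\ge3$, $\Fcal_n^{**}\cong\Ocal_{\PBbb^2}^{\oplus2}$, so $\Ecal$ is a cone family, not fertile. Hence $\Ecal$ is fertile iff $n=2$.

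I expect the main obstacle to be the two blow-up chart computations — the reflexivity and $\PBbb^2$-restriction of the naive extension $\widehat{\Ecal}_n$, and above all the explicit local form of $\widehat{\Ecal}_n'$ at $p_0$ after the elementary modification, on which the whole dichotomy rests; the multiplicity computation, the Chern class bookkeeping, and the instability of $\widehat{\Ecal}_n|_{\PBbb^2}$ for $n\ge2$ are routine.
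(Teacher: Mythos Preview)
Your argument is correct and reaches the same trichotomy as the paper, but by a genuinely different route. The paper simply \emph{writes down} the semistable extension in each regime: for $n=1$ the map $(X,Y,Z):\Ocal_{\widehat{B}}(2)\to\Ocal_{\widehat{B}}(1)^{\oplus3}$ giving $\Tcal_{\PBbb^2}(-2)$; for $n=2$ the map $(X,Y,Z^2):\Ocal_{\widehat{B}}(2)\to\Ocal_{\widehat{B}}(1)^{\oplus2}\oplus\Ocal_{\widehat{B}}$ giving the basic one-instanton; and for $n\ge3$ the map $(X,Y,z^{n-2}Z^2):\Ocal_{\widehat{B}}(2)\to\Ocal_{\widehat{B}}(1)^{\oplus2}\oplus\Ocal_{\widehat{B}}(2)$ giving $\Ocal_{\PBbb^2}\oplus\Ical_{[0,0,1]}$ directly. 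No modification step, no local chart analysis --- one just checks each proposed extension is semistable and reads off the restriction to $\PBbb^1$ or the double dual.

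Your approach instead runs the algorithm of Proposition~\ref{Prop2.6}: start from the single naive extension $\widehat{\Ecal}_n=\coker\big(\Ocal_{\widehat{B}}(1)\to\Ocal_{\widehat{B}}^{\oplus3}\big)$, observe it is already semistable for $n=1$ but unstable for $n\ge2$, and then perform one elementary modification, identifying the result by the local computation $\coker(\xi_0,\xi_1,z^{n-2})$. This is more work in this example, but it has the virtue of being \emph{mechanical}: you never have to guess the right target of the twisted presentation (the paper's somewhat ad~hoc $z^{n-2}Z^2$ term for $n\ge3$), and the same template would apply to other monomial families. The paper's approach, by contrast, is shorter precisely because the answer is easy to guess here. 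Your use of $\Ext^1_{\PBbb^2}(\Ical_{p_0},\Ocal_{\PBbb^2})\cong\CBbb$ to pin down $\Fcal_n$ from its local freeness is a clean way to finish that the paper does not need, since it already has the global presentation in hand.
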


\begin{proof}
    If $n=2$, the optimal extension is given by 
      $$
    0\rightarrow \Ocal_{\widehat{B}}(2) \xrightarrow{\begin{pmatrix}
    X\\
    Y\\
    Z^2
    \end{pmatrix}} \Ocal_{\widehat{B}}(1)^{\oplus 2}\oplus \Ocal_{\widehat{B}} \rightarrow \widehat{\Ecal} \rightarrow 0
    $$
    where $\widehat{\Ecal}|_{\PBbb^2}$ is exactly the basic one instanton. If $n=1$, then the optimal extension is given by 
     $$
    0\rightarrow \Ocal_{\widehat{B}}(2) \xrightarrow{\begin{pmatrix}
    X\\
    Y\\
    Z
    \end{pmatrix}} \Ocal_{\widehat{B}}(1)^{\oplus 3} \rightarrow \widehat{\Ecal} \rightarrow 0
    $$
    where $\widehat{\Ecal}|_{\PBbb^2}=\Tcal_{\PBbb^2}(-2)$ which is stable but the family is not fertile since 
    $$
    \widehat{\Ecal}|_{\PBbb^2}\cong \Ocal_{\PBbb^1}(-1) \oplus \Ocal_{\PBbb^1}.
    $$
    For $n\geq 3,$ the following gives the cone extension
    $$
    0\rightarrow \Ocal_{\widehat{B}}(2) \xrightarrow{\begin{pmatrix}
    X\\
    Y\\
    z^{n-2}Z^2
    \end{pmatrix}} \Ocal_{\widehat{B}}(1)^{\oplus 2} \oplus \Ocal_{\widehat{B}}(2)  \rightarrow \widehat{\Ecal} \rightarrow 0
    $$
    where 
    $$
    \widehat{\Ecal}|_{\PBbb^2}\cong \Ocal_{\PBbb^2}\oplus \Ical_{[0,0,1]}.
    $$
\end{proof}

Now we look at a simple example  
\begin{exam}
    Consider the following fertile family   
    $$
    0\rightarrow \Ocal \xrightarrow{\begin{pmatrix}
    x+z\\
    y+z\\
    z^2
    \end{pmatrix}} \Ocal^{\oplus 3} \rightarrow \Ecal \rightarrow 0.
    $$
    This forms the singularity $\Ocal \oplus (x,y)$ and the bubble $\underline{\Ecal}$ is given by 
    $$
    0\rightarrow \Ocal \xrightarrow{\begin{pmatrix}
    X+Z\\
    Y+Z\\
    Z^2
    \end{pmatrix}} \Ocal(1) \oplus \Ocal(1) \oplus \Ocal(2) \rightarrow \underline{\Ecal}(2)\rightarrow 0.
    $$
    This is different from the standard bubble
    $$
    0\rightarrow \Ocal \xrightarrow{\begin{pmatrix}
    X\\
    Y\\
    Z^2
    \end{pmatrix}} \Ocal(1) \oplus \Ocal(1) \oplus \Ocal(2) \rightarrow \underline{\Ecal}(2)\rightarrow 0
    $$
    since $X+Z=0$ is a jumping line for the first but not for the standard one. On the other hand, the two bubbles can be related by doing a coordinate change of $\PBbb^2$ as $X\mapsto X+Z, Y \mapsto Y+Z, Z\mapsto Z$ induced by the coordinate change o as $(x,y,z) \mapsto (x+z,y+z,z)$.
\end{exam}

This simple example explains in general what happens for forming singularity of multiplicity $1$.

\begin{lem}\label{lem4.3}
Given any family $\Ecal$ forming the singularity $\Ocal \oplus (x,y)$, $\Ecal$ lies in the following exact sequence
    $$
    0\rightarrow \Ocal \xrightarrow{\begin{pmatrix}
    p_1\\
    p_2\\
    p_3
    \end{pmatrix}} \Ocal^{\oplus 3} \rightarrow \Ecal \rightarrow 0.
    $$
    where $p_1|_{z=0}=x$ and $p_2|_{z=0}=y$. 
\end{lem}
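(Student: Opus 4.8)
The plan is to lift a suitably chosen three-generator, one-relation presentation of the central fibre $\Ecal|_{B_0}\cong\Ocal_{B_0}\oplus\Ical$, with $\Ical=(x,y)$, up to $B$, and then to identify the resulting syzygy sheaf with the trivial line bundle. Throughout, the assertion being local at $0$, I allow $B$ to shrink.

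First I would produce the right presentation on $B_0$. The Koszul complex of $\Ical=(x,y)$ reads, after a sign change, $0\to\Ocal_{B_0}\xrightarrow{(x,y)^{\mathsf T}}\Ocal_{B_0}^{\oplus2}\xrightarrow{(y,\,-x)}\Ical\to0$, exhibiting $\Ical$ with two generators and the single syzygy $(x,y)^{\mathsf T}$; adding the identity on the $\Ocal_{B_0}$ summand, placed in the last coordinate, gives an exact sequence $0\to\Ocal_{B_0}\xrightarrow{(x,y,0)^{\mathsf T}}\Ocal_{B_0}^{\oplus3}\to\Ical\oplus\Ocal_{B_0}\to0$ whose only syzygy is $(x,y,0)^{\mathsf T}$. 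Fixing an isomorphism $\Ecal|_{B_0}\cong\Ical\oplus\Ocal_{B_0}$ near $0$ and letting $v_1,v_2,v_3$ be the images of the standard basis of $\Ocal_{B_0}^{\oplus3}$, these minimally generate the stalk of $\Ecal|_{B_0}$ at $0$, and the only relation among them there is $xv_1+yv_2=0$.

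Then I would lift to $B$. Since $\Ecal$ is torsion free, $z$ is a nonzerodivisor on $\Ecal$, so $0\to\Ecal\xrightarrow{\,z\,}\Ecal\to\iota_*(\Ecal|_{B_0})\to0$ is exact; as $B$ is Stein, $H^1(B,\Ecal)=0$, and the $v_i$ lift to sections $\widetilde v_i\in H^0(B,\Ecal)$, which by Nakayama at $0$ generate $\Ecal$ after shrinking $B$. Set $\Kcal=\ker\big(\Ocal_B^{\oplus3}\xrightarrow{(\widetilde v_1,\widetilde v_2,\widetilde v_3)}\Ecal\big)$, a torsion-free subsheaf of $\Ocal_B^{\oplus3}$. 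Tensoring with $\Ocal_{B_0}$ and using $\Tor_1^{\Ocal_B}(\Ecal,\Ocal_{B_0})=\ker(z\colon\Ecal\to\Ecal)=0$ identifies $\Kcal/z\Kcal$ with $\ker(\Ocal_{B_0}^{\oplus3}\to\Ecal|_{B_0})$, which by the previous step is $\cong\Ocal_{B_0}$, generated by $(x,y,0)^{\mathsf T}$. I then lift this generator to a section $s=(p_1,p_2,p_3)^{\mathsf T}\in H^0(B,\Kcal)$ (again possible by Stein vanishing, since $\Kcal$ is $z$-torsion free), so that $p_1|_{z=0}=x$ and $p_2|_{z=0}=y$ by construction. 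Nakayama over $\Ocal_{B,0}$, with the ideal $(z)\subseteq\mfrak_0$ (so that $\Kcal_0=\Ocal_{B,0}\,s+z\Kcal_0$ forces $\Kcal_0=\Ocal_{B,0}\,s$), shows $s$ generates $\Kcal$ after shrinking $B$, while $\Ocal_B\xrightarrow{\,s\,}\Kcal$ is injective because $p_1\neq0$. Hence $\Kcal\cong\Ocal_B$, and $0\to\Ocal_B\xrightarrow{(p_1,p_2,p_3)^{\mathsf T}}\Ocal_B^{\oplus3}\to\Ecal\to0$ is the required resolution (in fact one even gets $p_3|_{z=0}=0$).

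The step I expect to be the main obstacle is the last one: showing the syzygy sheaf $\Kcal$ is free and admits a generator whose restriction to $B_0$ is $(x,y,0)^{\mathsf T}$. The two facts that make it go through are the $\Tor$-vanishing coming from $z$ being a nonzerodivisor on the torsion-free sheaf $\Ecal$, which pins down $\Kcal/z\Kcal$, and the version of Nakayama's lemma for the non-maximal ideal $(z)\subseteq\Ocal_{B,0}$, which promotes ``generated modulo $z$'' to ``generated''. Everything else --- the linear algebra of the Koszul complex of $(x,y)$, and the routine coherence/Stein bookkeeping used to lift sections and to spread generation out over a neighbourhood of $0$ --- is straightforward.
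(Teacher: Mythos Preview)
Your proof is correct and follows essentially the same strategy as the paper: lift a three-generator presentation of $\Ecal|_{B_0}\cong\Ocal_{B_0}\oplus(x,y)$ to $B$ using $H^1(B,\Ecal)=0$, apply Nakayama to get surjectivity, and identify the kernel with $\Ocal_B$. The only difference is in this last step: the paper observes that $\ker\Phi$ is rank one reflexive, hence a line bundle, hence trivial on the ball $B$, whereas you argue directly via $\Tor$-vanishing and Nakayama over $(z)$ that a lift of $(x,y,0)^{\mathsf T}$ generates $\Kcal$. Your route has the advantage of making the restriction $p_i|_{z=0}$ explicit by construction (and even yields $p_3|_{z=0}=0$), while the paper's route is a one-line structural observation but leaves the identification of $(p_1,p_2,p_3)^{\mathsf T}|_{B_0}$ with a unit multiple of $(x,y,0)^{\mathsf T}$ implicit.
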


\begin{proof}
    By assumption, we have a short exact sequence 
    $$
    \Ocal_{B}^{\oplus 3} \xrightarrow{\phi=\begin{pmatrix}
        x&y&0
    \end{pmatrix}} \Ecal|_{z=0} \rightarrow 0.
    $$
    Since $H^1(B, \Ecal)=0$, this can be extended to be map 
    $$
    \Ocal^{\oplus 3} \xrightarrow{\Phi} \Ecal. 
    $$
    Since $\Phi|_{z=0}$ is surjective, by Nakayama's lemma, we know $\Phi$ is surjective. Since $\Ecal$ has rank two, $\ker(\Phi)$ is a rank one reflexive sheaf which has to be locally free. This gives what we need.
\end{proof}

\begin{prop}\label{ChargeOne}
There exists an essentially unique fertile family forming the singularity $\Ocal \oplus (x,y)$. More precisely, up to isomorphisms, all such families can be obtained by pulling back the family  
    $$
    0\rightarrow \Ocal \xrightarrow{\begin{pmatrix}
    x\\
    y\\
    z^2
    \end{pmatrix}} \Ocal^{\oplus 3} \rightarrow \Ecal \rightarrow 0.
    $$
through a local coordinate change near $0$ in $B$.
\end{prop}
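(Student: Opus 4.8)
The plan is to use Lemma~\ref{lem4.3} to put $\Ecal$ in a standard presentation, then to reduce that presentation --- by a coordinate change on $B$ together with an automorphism of $\Ocal^{\oplus 3}$ --- to one of the families $\coker\big(\Ocal\xrightarrow{(x,y,z^{n})^{T}}\Ocal^{\oplus 3}\big)$ appearing in Lemma~\ref{BasicOne}, and finally to quote that lemma. \emph{Step 1 (presentation and normalization).} By Lemma~\ref{lem4.3} we may write $\Ecal\cong\coker\big(\Ocal\xrightarrow{(p_1,p_2,p_3)^{T}}\Ocal^{\oplus 3}\big)$ with $p_1|_{z=0}=x$ and $p_2|_{z=0}=y$. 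Restricting the presentation to $B_0$ and using $\Ecal|_{B_0}\cong\Ocal_{B_0}\oplus(x,y)$, one sees that $p_3|_{z=0}$ must lie in the maximal ideal $(x,y)$ (otherwise the cokernel over $B_0$ would acquire a free rank-two summand); subtracting suitable $\Ocal_B$-multiples of the first two rows from the third --- an automorphism of $\Ocal^{\oplus 3}$, hence an isomorphism of $\Ecal$ --- we may then assume $p_3|_{z=0}=0$, i.e. $p_3=zq$ for some $q\in\Ocal_B$. Since $p_1=x+za_1$ and $p_2=y+za_2$, the map $(x,y,z)\mapsto(p_1,p_2,z)$ has unipotent Jacobian at $0$, so $(p_1,p_2,z)$ is a local coordinate system, and this coordinate change preserves the splitting $\CBbb^3=\CBbb^2\times\CBbb$, i.e. it fixes $B_0=\{z=0\}$ setwise. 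After performing it, $\Ecal$ becomes the pullback of a family presented by $(x,\,y,\,z\tilde q)^{T}$ for some $\tilde q\in\Ocal_B$.

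\emph{Step 2 (reduction to $(x,y,z^{n})^{T}$).} Reflexivity of $\Ecal$ forces $V(x,y,z\tilde q)$ to be zero-dimensional; as this set equals $\{x=y=0\}\cap V(z\tilde q)$, we get $\tilde q(0,0,z)\not\equiv 0$. Write $\tilde q(0,0,z)=z^{m}u(z)$ with $u(0)\ne 0$; since $\tilde q-z^{m}u(z)$ vanishes on the smooth line $\{x=y=0\}$, whose ideal is $(x,y)$, we may write $\tilde q=z^{m}u(z)+xg+yh$. Subtracting $zg$ times the first row and $zh$ times the second from the third, then rescaling the third row by the unit $u(z)^{-1}$, transforms $z\tilde q$ into $z^{m+1}$ while leaving $x,y$ untouched. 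Hence, up to isomorphism and the coordinate change of Step 1,
\[
\Ecal\cong\coker\big(\Ocal\xrightarrow{(x,\,y,\,z^{n})^{T}}\Ocal^{\oplus 3}\big),\qquad n=m+1\ge 1.
\]

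\emph{Step 3 (conclusion).} A coordinate change of $B$ fixing $B_0$ lifts to an automorphism of $\widehat B$ that fixes $\PBbb^2$ and, because it preserves the tangent directions to $B_0$, also fixes $\PBbb^1$; since it acts on $\PBbb^2$ by a projective linear map it preserves semistability, Chern classes, triviality of the restriction to $\PBbb^1$, and triviality of the double dual on $\PBbb^2$ --- that is, it preserves fertility. Fertility is likewise invariant under isomorphisms of $\Ecal$. Therefore $\Ecal$ is fertile if and only if $\coker\big((x,y,z^{n})^{T}\big)$ is, which by Lemma~\ref{BasicOne} happens exactly when $n=2$. This yields existence --- the family $\coker\big((x,y,z^{2})^{T}\big)$ is fertile and forms $\Ocal\oplus(x,y)$ --- and uniqueness: a fertile family forming $\Ocal\oplus(x,y)$ must have $n=2$ above, hence is isomorphic to the pullback of $\coker\big((x,y,z^{2})^{T}\big)$ under the coordinate change produced in Step 1.

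\emph{Expected main difficulty.} The content is all in Steps 1--2: checking that the coordinate change $(x,y,z)\mapsto(p_1,p_2,z)$ is legitimate and \emph{fixes the splitting} $\CBbb^3=\CBbb^2\times\CBbb$ (so that ``forming the singularity $\Ocal\oplus(x,y)$'' is still meaningful and is preserved), and verifying that the row operations achieve the reduction to $(x,y,z^{n})^{T}$ without spoiling the normalization $p_1=x$, $p_2=y$. The one genuinely algebraic input --- that $\tilde q-z^{m}u(z)$ lies in $(x,y)$ --- is just the fact that $\{x=y=0\}$ is a complete intersection with ideal $(x,y)$. Once the reduction is in place, Lemma~\ref{BasicOne} finishes the argument.
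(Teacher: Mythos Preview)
Your proof is correct and follows essentially the same route as the paper's: invoke Lemma~\ref{lem4.3}, use the coordinate change $(x,y,z)\mapsto(p_1,p_2,z)$, then row-reduce the third entry to $z^n$ and apply Lemma~\ref{BasicOne}. You have simply made explicit the two steps the paper leaves implicit---why $p_3'$ can be reduced to a pure power of $z$ (your Step~2, via the decomposition $\tilde q=z^{m}u(z)+xg+yh$) and why fertility is preserved under the coordinate change (your Step~3)---so your argument is a fleshed-out version of the same proof rather than a different one.
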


\begin{proof}
    Given any family $\Ecal$ forming the singularity $\Ocal \oplus (x,y)$, by Lemma \ref{lem4.3}, we can represent $\Ecal$ as 
    $$
    0\rightarrow \Ocal \xrightarrow{\begin{pmatrix}
    p_1\\
    p_2\\
    p_3
    \end{pmatrix}} \Ocal^{\oplus 3} \rightarrow \Ecal \rightarrow 0.
    $$
    where $p_1|_{z=0}=x$ and $p_2|_{z=0}=y$. In particular, the map 
    $$(x,y,z)\mapsto (p_1, p_2,z)$$ 
    induces a local coordinate change near $0$. After the coordinate change, the new family is given by 
    $$
    0\rightarrow \Ocal \xrightarrow{\begin{pmatrix}
    x\\
    y\\
    p_3'
    \end{pmatrix}} \Ocal^{\oplus 3} \rightarrow \Ecal \rightarrow 0.
    $$
    By applying a linear transform to $\Ocal^{\oplus 3}$, we know the family above is isomorphic to the family
    $$
    0\rightarrow \Ocal \xrightarrow{\begin{pmatrix}
    x\\
    y\\
    z^n
    \end{pmatrix}} \Ocal^{\oplus 3} \rightarrow \Ecal \rightarrow 0.
    $$
    for some $n$, which is fertile if and only if $n=2$ by Lemma \ref{BasicOne}. The conclusion follows.
\end{proof}

\subsection{Bubbling of multiplicity two}
In this section, we will focus on families forming the singularity $(x, y^2) \oplus \Ocal_{B}$.  

It is fairly easy to construct fertile families forming $\Ocal \oplus (x,y^2)$ with the height of the bubble tree being one as we will see below. Instead, we first ask the following more interesting question

\begin{prob}
    Does there exist a fertile family forming the singularity 
    $$\Ocal \oplus (x,y^2)$$ 
    with the height of the bubble tree being at least two?
\end{prob}

The answer is yes. For this, consider
    $$
0\rightarrow \Ocal \xrightarrow{\begin{pmatrix}
    x\\
    y^2+z^4\\
    yz^2
\end{pmatrix}} \Ocal^{\oplus 3} \rightarrow \Ecal \rightarrow 0.
$$
The bubble of this family is given by 
    $$
0\rightarrow \Ocal \xrightarrow{\begin{pmatrix}
    X\\
    Y^2\\
    YZ^2
\end{pmatrix}} \Ocal(1) \oplus \Ocal(2) \oplus \Ocal(3) \rightarrow \underline{\Ecal}(3) \rightarrow 0.
$$
Note $\underline{\Ecal}$ has a singularity near $[0,0,1]$. Thus the height of the bubble tree is two where the bubble of each level has charge one, i.e., they are both modeled on the standard bubble of charge one. Note here the bubble near the singular point $[0,0,1]$ is modeled on 
$$
0\rightarrow \Ocal \xrightarrow{\begin{pmatrix}
    x\\
    y
\end{pmatrix}}  \Ocal \oplus (x,y)  \xrightarrow{\begin{pmatrix}
    -y&x
\end{pmatrix}} (y,x^2) \rightarrow 0.
$$

\begin{prob}
    Can we classify all the fertile families forming singularity $\Ocal \oplus (x,y^2)$?
\end{prob}

The complete answer to this question does not seem straightforward, but we note the example above can be generalized as
$$
0\rightarrow \Ocal \xrightarrow{\begin{pmatrix}
    x\\
    y^2+z^m\\
    yz^2
\end{pmatrix}} \Ocal^{\oplus 3} \rightarrow \Ecal \rightarrow 0.
$$
where $m\geq 3$. The bubble trees are the same while the families are totally different.  Also we know from this family that

\begin{cor}
In general, the bubble tree is not an invariant of $\Ecal|_{mB_0}$ for any positive integer $m$ depending only on the singularity and the bubble tree.
\end{cor}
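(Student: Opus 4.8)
The plan is to show that for every positive integer $m$ one can find two fertile families forming the same singularity and agreeing to order $m$ along $B_0$, yet having bubble trees of different heights; this is exactly the failure of $\Ecal|_{mB_0}$ to determine the bubble tree. Fix $m$, put $n=\max(m,2)$, and consider
$$
\begin{aligned}
&\Ecal_A:\ 0\to\Ocal\xrightarrow{\begin{pmatrix}x\\ y^n+z^{n+1}\\ yz^n\end{pmatrix}}\Ocal^{\oplus 3}\to\Ecal_A\to 0,\\
&\Ecal_B:\ 0\to\Ocal\xrightarrow{\begin{pmatrix}x\\ y^n+z^{n}\\ yz^n\end{pmatrix}}\Ocal^{\oplus 3}\to\Ecal_B\to 0 .
\end{aligned}
$$
The two maps differ only in the $z$-power of the middle entry ($z^{n+1}$ versus $z^{n}$): this is precisely the degree-$n$ datum that the bubble records but that the $m$-jet along $B_0$ (with $m\le n$) cannot see. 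First I would check that each map drops rank only at the origin, so both sheaves are reflexive with an isolated singularity, and that setting $z=0$ gives in both cases $\coker\begin{pmatrix}x\\ y^n\\ 0\end{pmatrix}\cong\Ocal_{B_0}\oplus(x,y^n)$; thus both form the singularity $\Ocal_{B_0}\oplus(x,y^n)$ with multiplicity $k=n$.

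Next I would prove fertility of both families via the discriminant criterion of Theorem \ref{Thm1.1}. Homogenizing along the blow-up produces extensions on $\widehat B$ whose restrictions to $\PBbb^2$ are the cokernels of
$$
\begin{pmatrix}X\\ Y^n\\ YZ^n\end{pmatrix},\qquad \begin{pmatrix}X\\ Y^n+Z^n\\ YZ^n\end{pmatrix}\ :\ \Ocal\longrightarrow \Ocal(1)\oplus\Ocal(n)\oplus\Ocal(n+1):
$$
for $\Ecal_A$ the term $z^{n+1}$ vanishes to order $n+1$ along the exceptional divisor, one higher than $y^n$, so it disappears on $\PBbb^2$ leaving $Y^n$, while for $\Ecal_B$ the term $z^n$ vanishes to the same order $n$ as $y^n$ and survives as $Z^n$. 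Each homogeneous map is injective, so the restriction to $\PBbb^2$ is the stated cokernel with no $\Tor$-correction, and its Koszul-type resolution gives $c_1=2n+2$, $c_2=n^2+3n+1$, whence
$$
\Delta=4c_2-c_1^2=4(n^2+3n+1)-(2n+2)^2=4n=4k .
$$
Attaining the maximal value $\Delta=4k$ forces these to be the normalized semistable extensions (Proposition \ref{Prop2.6}, Proposition \ref{BubbleUniqueness}) and yields triviality at infinity (Proposition \ref{BubbleCharaterization}); since the reflexive hulls of the two bubbles have $c_2=n$ (for $\Ecal_B$) and $c_2=n-1\ge 1$ (for $\Ecal_A$), neither is $\Ocal_{\PBbb^2}^{\oplus 2}$, so both families are fertile.

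Finally I would read off the trees and the coincidence of restrictions. For $\Ecal_B$ the bubble $\begin{pmatrix}X\\ Y^n+Z^n\\ YZ^n\end{pmatrix}$ has no common zero on $\PBbb^2$ (if $X=0$ and $YZ^n=0$ then $Y=0$ or $Z=0$, and either makes $Y^n+Z^n$ nonzero), so it is locally free and the bubble tree has height one. For $\Ecal_A$ the bubble vanishes at $[0,0,1]$; in the chart $Z=1$ it is $\begin{pmatrix}a\\ b^n\\ b\end{pmatrix}$, which reduces to $\begin{pmatrix}a\\ 0\\ b\end{pmatrix}$ with cokernel $(a,b)\oplus\Ocal$, a charge-one singularity exactly as in the example preceding the corollary, so blowing up $[0,0,1]$ yields a second level and the tree has height two. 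On the other hand the two presentations differ only by $z^{n+1}-z^n$ in the middle entry, and the third entry $yz^n$ lies in $(z^n)$; since $n\ge m$, both reduce modulo $z^m$ to $\begin{pmatrix}x\\ y^n\\ 0\end{pmatrix}$, so $\Ecal_A|_{mB_0}\cong\Ecal_B|_{mB_0}$. Hence for every $m$ there are two families with isomorphic $m$-th order restriction along $B_0$ but bubble trees of different heights, which is the assertion of the corollary.

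The step I expect to be the main obstacle is the justification of fertility: one must verify that the naive homogenizations really are the semistable extensions on $\widehat B$ — that they are reflexive (the rank drops only in codimension three, at $[0,0,1]$ for $\Ecal_A$ and nowhere along $\PBbb^2$ for $\Ecal_B$) and that their restrictions to $\PBbb^2$ acquire no extra torsion — so that the Chern-class computation of $\Delta$ is legitimate and Theorem \ref{Thm1.1} applies. For $\Ecal_A$ one must also confirm that the singularity of the bubble at $[0,0,1]$ is genuinely charge one, hence a nontrivial second level; this is the content of the local reduction above, carried out just as in the worked example.
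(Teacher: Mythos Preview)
Your argument is correct and in fact more carefully justified than the paper's, which simply points to the one-parameter family
\[
0\to\Ocal\xrightarrow{\begin{pmatrix}x\\ y^2+z^m\\ yz^2\end{pmatrix}}\Ocal^{\oplus 3}\to\Ecal\to 0\qquad(m\ge 3)
\]
immediately preceding the corollary. There the singularity is kept \emph{fixed} at $\Ocal\oplus(x,y^2)$ and only $m$ varies, so any two members agree modulo $z^{\min(m,m')}$, which can be taken arbitrarily large, while the second-level local model near $[0,0,1]$ works out to $(u,v,w^{m-2})$ and hence changes type with $m$ (cf.\ Lemma~\ref{BasicOne}). Your route is genuinely different: you let the singularity $\Ocal\oplus(x,y^n)$ grow with the prescribed $m$ via $n=\max(m,2)$ and engineer the distinction already at the \emph{first} level --- the bubble for $\Ecal_B$ is locally free while that for $\Ecal_A$ is singular at $[0,0,1]$ --- so the heights are visibly $1$ versus $\ge 2$ with no second-level analysis needed. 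Your discriminant check $\Delta=4n=4k$ via Theorem~\ref{Thm1.1} is a clean explicit verification of fertility that the paper does not spell out.

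One caveat worth flagging: the corollary explicitly allows $m$ to depend on the singularity. Since your pair with singularity $\Ocal\oplus(x,y^n)$ of multiplicity $n$ agrees only modulo $z^n$, a proposed bound such as $m(\sigma)=k(\sigma)+1$ is not refuted by your examples. The paper's use of a fixed singularity with the parameter $m\to\infty$ is what excludes any singularity-dependent bound. This is easily patched in your framework by fixing $n$ (e.g.\ $n=2$) and instead taking two large values of the $z$-exponent --- which is exactly the paper's family --- at the price that the distinction then moves to the second level of the tree rather than the first.
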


Given this, it is more realistic for us to classify the bubble trees. We first note 

\begin{prop}
    The bubble tree of height two for forming $(x^2,y)\oplus \Ocal$ has height two with the double dual of the bubble at each level being the standard bubble of charge one.
\end{prop}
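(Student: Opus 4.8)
The plan is to force the shape of the tree from numerics at the first level and an application of the multiplicity-one classification at the second. A height-two tree does occur, e.g. for the $x\leftrightarrow y$ relabelling
$$
0\rightarrow \Ocal \xrightarrow{\begin{pmatrix} y\\ x^2+z^4\\ xz^2\end{pmatrix}} \Ocal^{\oplus 3} \rightarrow \Ecal \rightarrow 0
$$
of the family displayed above (restricting to $z=0$ gives $\Ecal|_{B_0}\cong \Ocal_{B_0}\oplus(x^2,y)$, so $k(\Ecal)=2$), and the argument below applies to \emph{any} fertile $\Ecal$ with $\Ecal|_{B_0}\cong\Ocal_{B_0}\oplus(x^2,y)$ whose (canonical, by Proposition \ref{BubbleUniqueness}) bubble tree has height two.

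Write $\underline{\Ecal}=\widehat{\Ecal}|_{\PBbb^2}$. By Theorem \ref{Thm1.1}, $c_1(\underline{\Ecal})=0$ and $c_2(\underline{\Ecal})=\Delta_{\Ecal}/4=k(\Ecal)=2$. As the tree has height at least two, $\underline{\Ecal}$ is not locally free, so in $0\to\underline{\Ecal}\to\underline{\Ecal}^{**}\to\tau\to 0$ the sheaf $\tau$ has some length $\ell\geq 1$ and $c_2(\underline{\Ecal}^{**})=2-\ell$. The $\Ocal_{B_0}$-summand of $\Ecal|_{B_0}$ extends to a section $s$ of $\Ecal$ over $B$ (as $H^1(B,\Ecal)=0$), which via $p^{*}\Ecal=p^{*}p_{*}\widehat{\Ecal}\to\widehat{\Ecal}$ (using $p_{*}\widehat{\Ecal}\cong\Ecal$, Lemma \ref{Thm1.5}) gives a section $\widehat{s}$ of $\widehat{\Ecal}$ that is nonzero along $\PBbb^1$. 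The induced section of $\underline{\Ecal}^{**}$ is then nonzero; since $\underline{\Ecal}^{**}$ is semistable of slope $0$ it has no divisorial zeros, so it vanishes on a finite scheme of length $c_2(\underline{\Ecal}^{**})$. If $c_2(\underline{\Ecal}^{**})=0$ this section is nowhere vanishing, forcing $\underline{\Ecal}^{**}\cong\Ocal_{\PBbb^2}^{\oplus 2}$ (the quotient line bundle has degree $0$ and $\Ext^1(\Ocal,\Ocal)=0$), contradicting fertility; hence $c_2(\underline{\Ecal}^{**})\geq 1$, so $\ell=1$ and $c_2(\underline{\Ecal}^{**})=1$, with $\tau$ supported at a single point $q\notin\PBbb^1$ (the latter since $\underline{\Ecal}|_{\PBbb^1}\cong\Ocal_{\PBbb^1}^{\oplus 2}$).

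For the first-level bubble, the section of $\underline{\Ecal}^{**}$ now vanishes on one reduced point $q'$, giving an extension $0\to\Ocal_{\PBbb^2}\to\underline{\Ecal}^{**}\to\mfrak_{q'}\to 0$ that is nonsplit by fertility. Since $\Ext^1_{\PBbb^2}(\mfrak_{q'},\Ocal_{\PBbb^2})\cong\CBbb$ there is exactly one such nonsplit extension, and the standard charge-one bubble $\underline{\Ecal}_0$ of Lemma \ref{BasicOne} is one of them: from the resolution $0\to\Ocal_{\PBbb^2}(-2)\to\Ocal_{\PBbb^2}(-1)^{\oplus 2}\oplus\Ocal_{\PBbb^2}\to\underline{\Ecal}_0\to 0$ the $\Ocal_{\PBbb^2}$-summand is a section vanishing at one reduced point. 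Hence $\underline{\Ecal}^{**}$ is the standard charge-one bubble, up to the linear change of coordinates of $\PBbb^2$ carrying $q'$ to $[0,0,1]$. For the second level, near $q$ the reflexive sheaf $\underline{\Ecal}$ on the smooth surface $\PBbb^2$ is a colength-one subsheaf of $\underline{\Ecal}^{**}$, so $\underline{\Ecal}\cong\Ocal\oplus\mfrak_q$ there. Blowing up the threefold $\widehat{B}$ at $q\in\PBbb^2\subset\widehat{B}$ and taking the normalized semistable extension produces the second-level bubble on the new exceptional $\PBbb^2$; near $q$ the data $(\widehat{B},\widehat{\Ecal})$ is a family over a $3$-ball forming the singularity $\Ocal\oplus(x,y)$ along the first exceptional divisor, i.e. a multiplicity-one family in the sense of the previous subsection. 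Since the tree has height exactly two, this family is fertile, so by Proposition \ref{ChargeOne} it is a coordinate-change pullback of the standard multiplicity-one family, whose bubble is the standard charge-one bubble; being locally free, that bubble terminates the tree. Thus the double dual of the bubble is the standard charge-one bubble at both levels.

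The main obstacle is the first-level rigidity: semistable rank-two sheaves on $\PBbb^2$ with $c_1=0$, $c_2=1$ are not unique (up to $S$-equivalence they form a $\PBbb^2$ of the sheaves $\Ocal\oplus\mfrak_q$), so it is not automatic that $\underline{\Ecal}^{**}$ is the standard charge-one bubble; the leverage is that the $\Ocal_{B_0}$-summand endows $\underline{\Ecal}^{**}$ with a section, making it a Serre extension of $\mfrak_{q'}$ by $\Ocal$, after which $\Ext^1_{\PBbb^2}(\mfrak_{q'},\Ocal_{\PBbb^2})\cong\CBbb$ pins it down. Secondary care is needed to check: $\underline{\Ecal}^{**}$ is semistable (preserved under reflexive hull) and its induced section is genuinely nonzero (it is, being nonzero along $\PBbb^1$) with no divisorial zeros; the local model $\underline{\Ecal}\cong\Ocal\oplus\mfrak_q$ near $q$ and the identification of the germ of $(\widehat{B},\widehat{\Ecal})$ at $q$ with a multiplicity-one family germ, so that Proposition \ref{ChargeOne} applies verbatim; and that ``height exactly two'' is precisely the condition that the second-level family be fertile rather than a cone (equivalently, that its local model near $q$ be $(x,y,z^2)$ rather than $(x,y,z^n)$ with $n\geq 3$).
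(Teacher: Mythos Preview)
Your proof is correct and substantially more detailed than the paper's. The paper's own proof is a single sentence: ``This follows from Proposition~\ref{ChargeOne}, i.e., the uniqueness of bubbles of charge one.'' It leaves implicit both the numerical step (that height two with total multiplicity two forces the split $2=1+1$) and the identification of the first-level double dual with the standard instanton.

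Where the two differ: the paper treats ``uniqueness of bubbles of charge one'' as a single black box coming out of Proposition~\ref{ChargeOne} and applies it at both levels. You instead (i) make the $2=1+1$ numerics explicit via $c_2(\underline{\Ecal}^{**})=2-\ell$ together with the fertility constraint $c_2(\underline{\Ecal}^{**})\ge 1$, and (ii) for the first level, rather than invoking uniqueness of the charge-one instanton, you use the section furnished by the $\Ocal_{B_0}$ summand to realise $\underline{\Ecal}^{**}$ as a nonsplit Serre extension $0\to\Ocal_{\PBbb^2}\to\underline{\Ecal}^{**}\to\mfrak_{q'}\to 0$ and then compute $\Ext^1_{\PBbb^2}(\mfrak_{q'},\Ocal_{\PBbb^2})\cong\CBbb$ to pin it down. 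For the second level you argue exactly as the paper intends, reducing to a multiplicity-one fertile family and applying Proposition~\ref{ChargeOne}. Your route is more self-contained; the paper's is terser but relies on the reader to supply precisely the steps you wrote out. One small point you handle correctly but could state more crisply: since $\underline{\Ecal}|_{\PBbb^1}\cong\Ocal_{\PBbb^1}^{\oplus 2}$, the restriction of the section to $\PBbb^1$ is a pair of constants, so ``nonzero along $\PBbb^1$'' already forces it to be nowhere vanishing there, giving $q'\notin\PBbb^1$ and hence a coordinate change fixing $\PBbb^1$ that carries $q'$ to $[0,0,1]$.
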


\begin{proof}
    This follows from Proposition \ref{ChargeOne}, i.e, the uniqueness of bubbles of charge one.
\end{proof}

Next we focus on fertile families with the height of bubble tree being one. We first note the following
\begin{prop}
    Suppose $\Ecal$ is a fertile family forming $(x^2,y)\oplus \Ocal_{B_0}$ with the height of the bubble tree being one. Then the optimal extension $\underline{\widehat{\Ecal}}$ lies in the following exact sequence  
    $$
    0\rightarrow \Ocal_{\PBbb^2} \rightarrow \underline{\widehat{\Ecal}} \rightarrow \underline{\Ical} \rightarrow 0
    $$
    where $\underline{\Ical}$ is isomorphic to the homogenization of $\Ical$. In particular, they are given by the constructions above.
\end{prop}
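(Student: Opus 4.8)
The plan is to run the argument behind Lemma~\ref{Thm1.5} and Proposition~\ref{BubbleCharaterization} using the distinguished section coming from the $\Ocal_{B_0}$-summand, and then to identify the resulting rank-one quotient by reducing to the constructions of Theorem~\ref{Main}. Throughout let $\widehat{\Ecal}$ be the normalized semistable extension of $\Ecal$ and $\underline{\widehat{\Ecal}}=\widehat{\Ecal}|_{\PBbb^2}$ the bubble. Since $\Ecal$ is fertile forming $(x^2,y)\oplus\Ocal_{B_0}$ we have $k(\Ecal)=\dim_\CBbb\Ocal_{B_0}/(x^2,y)=2$, so by Lemma~\ref{Thm1.5} and Proposition~\ref{BubbleCharaterization}, $p_*\widehat{\Ecal}\cong\Ecal$, $\widehat{\Ecal}|_{\PBbb^1}\cong\Ocal_{\PBbb^1}^{\oplus 2}$, $R^1p_*(\widehat{\Ecal}|_{\widehat{B_0}})=0$, and $\underline{\widehat{\Ecal}}$ is semistable with $c_1=0$ and $c_2=\Delta_{\Ecal}/4=2$. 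The hypothesis that the bubble tree has height one means precisely that $\underline{\widehat{\Ecal}}$ is locally free; since $\widehat{\Ecal}$ is already locally free away from $\PBbb^2$ (there it is $p^*\Ecal$), a Nakayama argument along the divisor $\PBbb^2$ shows $\widehat{\Ecal}$ is locally free on all of $\widehat{B}$. In particular $\widehat{\Ecal}|_{\widehat{B_0}}$ is a rank-two vector bundle on $\widehat{B_0}$ which is trivial on $\PBbb^1$ with $R^1p_*=0$, hence (after shrinking $B$) $\widehat{\Ecal}|_{\widehat{B_0}}\cong\Ocal_{\widehat{B_0}}^{\oplus 2}$.

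Next I produce the section. The splitting $\Ecal|_{B_0}\cong\Ocal_{B_0}\oplus\Ical$ provides a section $s_0$ of $\Ecal|_{B_0}$ generating the $\Ocal_{B_0}$-factor; since $B$ is Stein, $s_0$ lifts to $s\in H^0(B,\Ecal)=H^0(\widehat{B},\widehat{\Ecal})$, with image $\widehat{s}$ under $p^*p_*\widehat{\Ecal}\to\widehat{\Ecal}$ as in Lemma~\ref{Lemma3.7}. On $\widehat{B_0}$, away from $\PBbb^1$, $\widehat{s}$ agrees with the image of $s_0$ under $\Ecal|_{B_0}\hookrightarrow(\Ecal|_{B_0})^{**}=p_*(\widehat{\Ecal}|_{\widehat{B_0}})$, which is the nowhere-vanishing section $(1,0)$ of $\Ocal_{\widehat{B_0}}^{\oplus 2}$; since $\widehat{\Ecal}|_{\widehat{B_0}}$ is locally free these sections agree everywhere, so $\widehat{s}|_{\widehat{B_0}}$—in particular $\widehat{s}|_{\PBbb^1}$—is nowhere zero and $\widehat{s}$ is not identically zero on $\PBbb^2$. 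Because $\underline{\widehat{\Ecal}}$ is semistable of degree $0$, $\widehat{s}|_{\PBbb^2}$ cannot vanish along a curve, so its zero scheme is a length-$c_2(\underline{\widehat{\Ecal}})=2$ subscheme $W\subset\PBbb^2$, disjoint from $\PBbb^1$; this gives
$$
0\longrightarrow\Ocal_{\PBbb^2}\xrightarrow{\ \widehat{s}|_{\PBbb^2}\ }\underline{\widehat{\Ecal}}\longrightarrow\Jcal\longrightarrow 0,\qquad \Jcal\cong\Iscr_W .
$$

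It remains to identify $\Jcal$ with the homogenization $\underline{\Ical}$ of $\Ical=(x^2,y)$, i.e.\ to show $W$ is the length-$2$ scheme supported at $[0,0,1]$ with $\Ocal_{\PBbb^2}/\Iscr_W\cong\Ocal_{\PBbb^2}/(x^2,y)$. I would argue via Theorem~\ref{Main}. Since $\Ecal$ is non-barren, Theorem~\ref{Main}(1) gives that the elementary modifications of $\Ecal$ along the $\Ocal_{B_0}$-factor are cone families forming $\Ocal_{B_0}\oplus\Ical$, all with the same cone $\Ocal_{\PBbb^2}\oplus\underline{\Ical}$ for a fixed ideal $\underline{\Ical}$ of a length-$2$ subscheme supported at $[0,0,1]$, with $\Ecal$ itself the unique fertile step of this sequence. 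Tracking the elementary modification relating $\widehat{\Ecal}$ to the extension of the adjacent cone family, exactly as in Lemma~\ref{Lemma3.5} and Lemma~\ref{Lemma3.13} (the relevant datum on $\PBbb^1$ is $\underline{\widehat{\Ecal}}|_{\PBbb^1}/(\widehat{s}|_{\PBbb^1})\cong\Ocal_{\PBbb^1}$), and comparing Chern classes (all have $c_1=0$, $c_2=2$) together with the uniqueness of the bubble (Proposition~\ref{BubbleUniqueness}), forces $\Jcal\cong\underline{\Ical}$; for multiplicity $2$ one reads off from that construction—where $\underline{\Ical}$ is built from the $z$-homogenization of the minimal presentation of $\Ocal_{B_0}\oplus\Ical$—that $\underline{\Ical}$ is the homogenization of $\Ical$. (Alternatively one can compute $W$ by hand: by the analogue of Lemma~\ref{lem4.3} present $\Ecal$ as $0\to\Ocal_B\xrightarrow{(p_1,p_2,p_3)^T}\Ocal_B^{\oplus 3}\to\Ecal\to 0$ reducing mod $z$ to the syzygy of $\Ocal_{B_0}\oplus\Ical$; dualizing identifies $Z(s)$ with $V(\mathrm{Fitt}_0\,\coker(s^\vee))$, a $z$-flat family of length-$2$ subschemes with central fiber $V(\Ical)$, whose strict transform meets $\PBbb^2$ in the homogenization of $V(\Ical)$, which is $W$.) With $\underline{\widehat{\Ecal}}$ determined, the normal-form argument of Proposition~\ref{ChargeOne} applies: the presentation above, after the coordinate change absorbing $p_1,p_2$ and normalizing $p_3$, identifies $\Ecal$, up to isomorphism and a local change of coordinates near $0$, with the explicit height-one families constructed above—so they are given by the constructions above.

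The main obstacle is the identification of $W$: the restriction of the reflexive sheaf $\widehat{\Ecal}$ (and of $p^*\Ecal$) to the codimension-one divisor $\PBbb^2$ is delicate, the zero scheme must be handled via Fitting ideals rather than set-theoretically, and one must show that the extra vanishing created in passing to the bubble is concentrated at $[0,0,1]$ with exactly the homogenized scheme structure. Routing this through Theorem~\ref{Main} trades the blow-up bookkeeping for the task of matching two a priori different semistable extensions, which is exactly what the uniqueness of the bubble is there to handle.
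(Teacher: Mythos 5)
Your skeleton up to the exact sequence is fine and partly overlaps the paper: the paper simply quotes Theorem \ref{Main} to get $0\rightarrow \Ocal_{\PBbb^2}\rightarrow \underline{\widehat{\Ecal}}\rightarrow \underline{\Ical}\rightarrow 0$ with $\underline{\Ical}$ of colength $k(\Ecal)=2$ supported at $[0,0,1]$, and does not need your local-freeness/section/zero-scheme analysis (which is correct but extra). The genuine gap is in the step where you identify the ideal. You claim that in the construction behind Theorem \ref{Main} ``$\underline{\Ical}$ is built from the $z$-homogenization of the minimal presentation of $\Ocal_{B_0}\oplus\Ical$,'' hence is the homogenization of $\Ical$. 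Theorem \ref{Main} asserts no such thing, and the claim is false as a general mechanism: the remark after Theorem \ref{Main} and Corollary \ref{Different} say explicitly that $\underline{\Ical}$ need have no relation to $\Ical$, and the example forming $\Ocal\oplus(x^2,y^2,xy)$ produces a locally complete intersection $\underline{\Ical}$, not the homogenization. Your parenthetical alternative has the same defect: the intersection of the strict transform of the $z$-flat family of zero schemes with $\PBbb^2$ is not determined by the central fiber $V(\Ical)$ alone — it depends on how the length-two scheme degenerates at blow-up scale (the two points could, a priori, separate at that scale and limit to two distinct points of $\PBbb^2$), so ``meets $\PBbb^2$ in the homogenization of $V(\Ical)$'' assumes exactly what must be proved.

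What closes the argument — and is all the paper does after citing Theorem \ref{Main} — is elementary and special to multiplicity two: since $\dim_{\CBbb}\Ocal_{\PBbb^2}/\underline{\Ical}=2$ and the support is the single point $[0,0,1]$, in local coordinates $(u,v)$ some linear form $su+tv$ must lie in $\underline{\Ical}$ (otherwise $1,\bar u,\bar v$ would be independent in the quotient, giving length at least three); after a coordinate change one may assume $u\in\underline{\Ical}$, and then the quotient is generated by $1,\bar v$, forcing $v^2\in\underline{\Ical}$. Thus $\underline{\Ical}\cong(u,v^2)$ up to coordinates, which is the homogenization of $(x^2,y)$. Note this classification is the only reason the statement is true, which is why a mechanism purporting to give ``homogenization of $\Ical$'' for general $\Ical$ cannot be the right justification. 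With this substitution the rest of your write-up (triviality along $\PBbb^1$, length-two zero scheme disjoint from $\PBbb^1$, the normal-form identification with the explicit families) goes through, though most of it is not needed for the stated proposition.
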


\begin{proof}
    By Theorem \ref{Main}, we know the optimal extension $\underline{\widehat{\Ecal}}$ lies in the following exact sequence  
    $$
    0\rightarrow \Ocal_{\PBbb^2} \rightarrow \underline{\widehat{\Ecal}} \rightarrow \underline{\Ical} \rightarrow 0
    $$
    for some ideal sheaf $\underline{\Ical}$ on $\PBbb^2$ which defines a subscheme supported at $[0,0,1]$. It suffices to show that it is isomorphic to $(x^2,y)$ under suitable choice of coordinates. For this, consider $\Ocal_{\PBbb^2}/\underline{\Ical}$, by choosing local coordinates $(u,v)$ where $[u,v,1]\in \PBbb^2$. First, we note $su+tv\in \underline{\Ical}$ for some constants $s,t$. Otherwise, $1, \bar{u}, \bar{v} \in \Ocal_{\PBbb^2}/\underline{\Ical}$, thus it is of dimension three, which is a contradiction. WLOG, assume $u\in \underline{\Ical}$. Now $\Ocal_{\PBbb^2}/\underline{\Ical}$ must be generated by $1$ and $\bar{v}$. In particular, $v^2\in \underline{\Ical}$. The conclusion follows.
\end{proof}

Given this, we focus now on constructing smooth bubbles of charge two. For this, let $\underline{\Ical}$ be the homogenization of $(x,y^2)$. Then the space $\Ext^1(\underline{\Ical}, \Ocal_{\PBbb^2})$ will give us the bubbles. For this, take the resolution
$$
0\rightarrow \Ocal_{\PBbb^2} \xrightarrow{\begin{pmatrix}
    X\\
    Y^2
\end{pmatrix}} \Ocal_{\PBbb^2}(1) \oplus \Ocal_{\PBbb^2}(2) \rightarrow \underline{\Ical}(3)\rightarrow 0.
$$
Then any semi-stable extension is given by a polynomial of degree $k$ 
$$
\Ocal_{\PBbb^2} \xrightarrow{p_3} \Ocal_{\PBbb^2}(k). 
$$
where $k$ is to be specified. More precisely, the extension associated to this polynomial $p_3$ is given by 
$$
0\rightarrow \Ocal \xrightarrow{\begin{pmatrix}
    X\\
    Y^2\\
    p_3
\end{pmatrix}} \Ocal_{\PBbb^3}(1) \oplus \Ocal_{\PBbb^3}(2) \oplus \Ocal_{\PBbb^3}(k) \rightarrow \underline{\Ecal}(3+k) \rightarrow 0.
$$
By doing a linear transform, we can assume 
$$
p_3=a Z^k+ b YZ^{k-1}
$$
To make the family fertile, the following must also hold 
$$
\underline{\Ecal}|_{\PBbb^1} \cong \Ocal_{\PBbb^1}^{\oplus 2},
$$
which forces $k=3$, i.e.,   
$$
p_3=a Z^3+ b YZ^{2}.
$$
Note by scaling the vector $(a,b)$, the bubble stays in the same isomorphism class. In particular, this gives 
\begin{prop}
    The space of bubble trees when forming singularities $\Ocal \oplus (x,y^2)$ can be parametrized by $t\in \PBbb^1=\CBbb \cup \infty$ where 
    \begin{itemize}
        \item the bubble tree has height one when $t\neq 0$;
        \item the bubble tree has height two when $t=0$.
    \end{itemize}
\end{prop}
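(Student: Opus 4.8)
The strategy is to combine the previous proposition---which exhibits every fertile family forming $\Ocal\oplus(x,y^2)$ through the explicit construction above---with a single rank computation that detects whether the resulting bubble is locally free, and then with the proposition above on height-two bubble trees to terminate the one degenerate stratum.

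First I would record the parametrization. By the previous proposition, the bubble of such a family is the $\PBbb^2$-restriction of an extension $0\to\Ocal_{\PBbb^2}\to\underline{\Ecal}\to\underline{\Ical}\to 0$, where $\underline{\Ical}$ is the homogenization of $(x,y^2)$, and using the resolution $0\to\Ocal_{\PBbb^2}\xrightarrow{(X,Y^2)^{\mathrm{t}}}\Ocal_{\PBbb^2}(1)\oplus\Ocal_{\PBbb^2}(2)\to\underline{\Ical}(3)\to 0$ together with the triviality-at-infinity constraint (which forces the extra entry to have degree $3$, of the form $aZ^3+bYZ^2$), any such bubble is presented by
$$
0\rightarrow\Ocal_{\PBbb^2}\xrightarrow{\ (X,\ Y^2,\ aZ^3+bYZ^2)^{\mathrm{t}}\ }\Ocal_{\PBbb^2}(1)\oplus\Ocal_{\PBbb^2}(2)\oplus\Ocal_{\PBbb^2}(3)\rightarrow\underline{\Ecal}(3)\rightarrow 0 .
$$
A short computation with the resolution identifies the admissible polynomials $aZ^3+bYZ^2$ with a basis of $\Ext^1(\underline{\Ical},\Ocal_{\PBbb^2})\cong\CBbb^2$; the zero class yields the cone $\Ocal_{\PBbb^2}\oplus\underline{\Ical}$, which is not a bubble, and rescaling $(a,b)$ leaves the extension unchanged. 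Hence the bubbles are parametrized by $[a:b]\in\PBbb(\Ext^1(\underline{\Ical},\Ocal_{\PBbb^2}))\cong\PBbb^1$, and I set $t=a/b\in\CBbb\cup\{\infty\}$, so that $t=0$ is precisely the class $a=0$.

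Next I would read off the height. The quotient $\underline{\Ecal}$ is locally free exactly when the displayed map is a subbundle inclusion, i.e. when $X$, $Y^2$ and $aZ^3+bYZ^2$ have no common zero on $\PBbb^2$. The only common zero of $X$ and $Y^2$ is $[0:0:1]$, where $aZ^3+bYZ^2$ evaluates---in the fibre of $\Ocal_{\PBbb^2}(3)$ trivialized by $Z^3$---to $a$. Thus $\underline{\Ecal}$ is locally free exactly when $t\ne 0$, and then $\underline{\Ecal}=\underline{\Ecal}^{**}$ has $c_2=k(\Ecal)=2\ne 0$, so the bubble is nontrivial and the bubble tree has height one (this includes $t=\infty$, where $p_3=Z^3$ still vanishes on $\{Z=0\}$, preserving triviality at infinity). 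If $t=0$, then $p_3=bYZ^2$ with $b\ne 0$; normalizing $b=1$ and working near $[0:0:1]$ in the chart $\{Z=1\}$ with coordinates $(X,Y)$, the presenting map is the column $(X,Y^2,Y)^{\mathrm{t}}$, and a linear change of basis of the target (subtract $Y$ times the last generator from the middle one) reduces it to $(X,0,Y)^{\mathrm{t}}$, so locally $\underline{\Ecal}\cong\Ocal\oplus(X,Y)$: a single point singularity at $[0:0:1]$ of multiplicity one, with $\underline{\Ecal}^{**}$ still having $c_2=2-1=1\ne 0$, so the lower level remains a genuine bubble. The bubble tree at $[0:0:1]$ is then a fertile family forming $\Ocal\oplus(x,y)$ (it is nonempty by Theorem \ref{Main}), which by Proposition \ref{ChargeOne} is the standard charge-one bubble; the latter is locally free, so no further levels appear and the tree has height exactly two. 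Combining the two strata gives the stated parametrization.

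The main obstacle is the $t=0$ step: one must be sure that the second level is forced to be a nontrivial---rather than barren or higher---charge-one bubble of the standard type, which is exactly where the identification of the induced singularity at $[0:0:1]$ as $\Ocal\oplus(x,y)$ of multiplicity one, together with the uniqueness in Proposition \ref{ChargeOne} (equivalently the proposition above on height-two bubble trees), is needed. Everything else is the rank and Chern-class bookkeeping already essentially carried out in the discussion preceding the statement.
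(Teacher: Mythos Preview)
Your argument is correct and follows the same route as the paper. The paper does not give a separate proof of this proposition; it is stated as a summary of the preceding discussion (the extension class computation yielding $p_3=aZ^3+bYZ^2$, the earlier proposition on height-two trees, and the height-one proposition identifying $\underline{\Ical}$ with the homogenization of $(x,y^2)$). You have simply made explicit the two verifications the paper leaves to the reader: the common-zero check showing $\underline{\Ecal}$ is locally free precisely when $a\neq 0$, and the local model $\underline{\Ecal}\cong\Ocal\oplus(X,Y)$ at $[0{:}0{:}1]$ when $a=0$, after which Proposition~\ref{ChargeOne} terminates the tree at height two.

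One small remark: you invoke ``the previous proposition'' as applying to \emph{every} fertile family, whereas its statement is phrased for height-one trees. This is harmless---the proof of that proposition (via Theorem~\ref{Main} and the classification of colength-two ideals at a point) never uses the height-one hypothesis, so the extension description $0\to\Ocal_{\PBbb^2}\to\underline{\Ecal}\to\underline{\Ical}\to 0$ with $\underline{\Ical}$ the homogenization of $(x,y^2)$ holds for any fertile family forming this singularity. It would be cleaner to say so explicitly rather than appeal to the proposition as stated.
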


\section{More examples}
In this section, we study more examples and use them to give negative answers to some general plausible questions. Below for a fertile family $\Ecal$ forming the singularity $\Ocal\oplus \Ical$, we denote the first level bubble as $\underline{\Ecal}$ which by Theorem \ref{Main} lies in the following exact sequence 
$$
0\rightarrow \Ocal_{\PBbb^2} \rightarrow \underline{\Ecal} \rightarrow \underline{\Ical} \rightarrow 0
$$
for some ideal sheaf $\underline{\Ical}$ which defines a subscheme of $\PBbb^2$ supported at $[0,0,1]$.

We first give a fertile family forming the singularity $(x^2,y^2,xy) \oplus \Ocal_{B_0}$ where the ideal $\underline{\Ical}$ is a locally complete intersection thus different from the original ideal.
\begin{exam}
Consider the family $\Ecal$ given by
    $$
    0\rightarrow \Ocal^{\oplus 2} \xrightarrow{\begin{pmatrix}
        0&x\\
        x&y\\
        y&z\\
        z^3&0
    \end{pmatrix}} \Ocal^{\oplus 4} \rightarrow \Ecal \rightarrow 0.
    $$
Note 
$$\Ecal|_{z=0}=\Ocal \oplus (x^2,y^2,xy)$$ 
and $\Ecal$ is fertile with the bubble tree being height one and the first level bubble being given by 
    $$
    0\rightarrow \Ocal^{\oplus 2} \xrightarrow{\begin{pmatrix}
        0&X\\
        X&Y\\
        Y&Z\\
        Z^3&0
    \end{pmatrix}} \Ocal(1)^{\oplus 3} \oplus \Ocal(3) \rightarrow \underline{\Ecal}(3) \rightarrow 0
    $$
    where $\underline{\Ecal}$ is locally free. Furthermore, $\underline{\Ecal}$ lies in the following exact sequence 
    $$
    0\rightarrow \Ocal_{\PBbb^2} \rightarrow \underline{\Ecal} \rightarrow \underline{\Ical} \rightarrow 0
    $$
    where $\underline{\Ical}$ is locally isomorphic to $(x-y^2,y^3)$ which after a coordinate change is essentially isomorphic to $(x, y^3)$. In particular, as a locally complete intersection, $\underline{\Ical}$ can never be isomorphic to $\Ical$ which is not a locally complete intersection. 
\end{exam}

\begin{cor}\label{Different}
   In general, $\underline{\Ical}$ and $\Ical$ are different in nature. 
\end{cor}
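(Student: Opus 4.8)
The corollary is immediate once the properties attributed to the displayed family $\Ecal$ in the preceding Example are checked, so the plan is to carry out that verification. Write $M$ for the given $4\times 2$ matrix, so $\Ecal=\coker(\Ocal_B^{\oplus 2}\xrightarrow{M}\Ocal_B^{\oplus 4})$. First I would confirm that $\Ecal$ is a rank two reflexive sheaf forming the stated singularity. The six $2\times 2$ minors of $M$ have $0\in B$ as their only common zero, so $M$ is injective, its cokernel is locally free off the origin and (the degeneracy locus having codimension three) torsion free; since $\Ecal$ has a two-term free resolution, $\mathrm{depth}\,\Ecal_0\ge 2$, hence $\Ecal$ is reflexive. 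Restricting to $z=0$ kills the last row of $M$, so $\Ecal|_{B_0}$ splits off a free $\Ocal_{B_0}$, and Hilbert--Burch applied to the remaining $3\times 2$ block identifies its cokernel with the ideal of its $2\times 2$ minors, namely $(x^2,xy,y^2)=(x,y)^2$. Thus $\Ecal|_{B_0}\cong\Ocal_{B_0}\oplus(x,y)^2$ and $k(\Ecal)=\dim_{\CBbb}\Ocal_{B_0}/(x,y)^2=3$.

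Next I would identify the bubble. Homogenizing $M$ (replacing $x,y,z$ by $X,Y,Z$, with target $\Ocal_{\PBbb^2}(1)^{\oplus 3}\oplus\Ocal_{\PBbb^2}(3)$) gives an extension $\widehat{\Ecal}$ across $\PBbb^2$ with a two-term resolution of $\underline{\Ecal}(3):=\widehat{\Ecal}|_{\PBbb^2}(3)$. The analogous minor computation in homogeneous coordinates shows the $2\times 2$ minors of the homogenized matrix have no common zero in $\PBbb^2$, so $\underline{\Ecal}$ is locally free, and the resolution yields $c_1(\underline{\Ecal})=0$ and $c_2(\underline{\Ecal})=3$, so $\tfrac{1}{4}\Delta(\underline{\Ecal})=3=k(\Ecal)$. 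By Theorem~\ref{Thm1.1}, attaining equality in the discriminant bound forces $\widehat{\Ecal}$ to be the normalized semistable extension and forces $\widehat{\Ecal}|_{\PBbb^1}\cong\Ocal_{\PBbb^1}^{\oplus 2}$; since $c_2(\underline{\Ecal})\neq 0$ we have $\underline{\Ecal}\ncong\Ocal_{\PBbb^2}^{\oplus 2}$. Hence $\Ecal$ is fertile with bubble $\underline{\Ecal}$, and local freeness of $\underline{\Ecal}$ makes the bubble tree have height one.

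It remains to pin down $\underline{\Ical}$ and conclude. The section of $\Ecal$ spanning the $\Ocal_{B_0}$ factor is the image of the fourth basis vector, which over $\widehat{B}$ extends to the section given by the $\Ocal_{\PBbb^2}(3)$ summand; since that summand meets the image of the homogenized matrix trivially, this produces $0\to\Ocal_{\PBbb^2}\to\underline{\Ecal}\to\underline{\Ical}\to 0$ with $\underline{\Ical}(3)\cong\coker(\Ocal^{\oplus 2}\to\Ocal_{\PBbb^2}(1)^{\oplus 3})$ for the upper-left $3\times 2$ block. Hilbert--Burch identifies $\underline{\Ical}$ with the ideal sheaf of $V(X^2,XY,XZ-Y^2)\subset\PBbb^2$, which in the chart $Z=1$ equals $(x^2,xy,x-y^2)=(x-y^2,y^3)$, and the change of variable $x\mapsto x+y^2$ turns this into $(x,y^3)$, a complete intersection. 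On the other hand the minimal number of generators of $\Ical=(x,y)^2$ is $\dim_{\CBbb}\Ical/\mfrak\Ical=3$, so $\Ical$ is not a codimension two complete intersection while $\underline{\Ical}$ is; therefore $\underline{\Ical}\ncong\Ical$, which is the content of Corollary~\ref{Different}.

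The main obstacle is the middle step: making sure the locally free sheaf $\underline{\Ecal}$ coming out of this particular homogeneous presentation is genuinely the bubble of Section~\ref{Bubble} rather than some other extension across $\PBbb^2$. This is secured by the discriminant identity $\tfrac{1}{4}\Delta(\underline{\Ecal})=k(\Ecal)$ together with the characterizations in Theorem~\ref{Thm1.1} (equivalently Propositions~\ref{Prop2.6} and \ref{BubbleCharaterization}), which force semistability and triviality at infinity. Granted that, everything else — local freeness via the minors, the Chern class count, and the Hilbert--Burch identification of $\underline{\Ical}$ followed by the coordinate change — is routine.
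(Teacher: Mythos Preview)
Your proposal is correct and follows the same route as the paper: the corollary is an immediate consequence of the preceding Example, and you verify the claims made there (reflexivity and the singularity type of $\Ecal$, identification of the bubble via the homogenized presentation, and the Hilbert--Burch computation of $\underline{\Ical}$). Your write-up is in fact more explicit than the paper's, which simply asserts the properties of the example; in particular, your use of Theorem~\ref{Thm1.1} to certify that the displayed $\widehat{\Ecal}$ really is the semistable extension (via $\tfrac{1}{4}\Delta=k(\Ecal)$) is exactly the right way to close the gap the paper leaves implicit.
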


Now we give a fertile family $\Ecal$ which forms $\Ocal_{B_0} \oplus (x^3, y^3)$ and the singularity of the bubble on the first level is no longer of the type $\Ocal \oplus \Ical$. 

\begin{exam}
Consider the following
    $$
    0\rightarrow \Ocal \xrightarrow{\begin{pmatrix}
    x^3\\
    y^3+z^4\\
    z^4(x^2+y^2)
    \end{pmatrix}} \Ocal^{\oplus 3} \rightarrow \Ecal \rightarrow 0.
    $$
The bubble of this family is given by 
    $$
    0\rightarrow \Ocal \xrightarrow{\begin{pmatrix}
    X^3\\
    Y^3\\
    Z^4(X^2+Y^2)
    \end{pmatrix}} \Ocal(3)^{\oplus 2} \oplus \Ocal(6) \rightarrow \underline{\Ecal} \rightarrow 0.
    $$    
Note $\underline{\Ecal}$ has a singularity at $[0,0,1]$. Writing in local coordinates, we know $\underline{\Ecal}$ is modeled on 
      $$
      0\rightarrow \Ocal \xrightarrow{\begin{pmatrix}
      x^3\\
      y^3\\
      x^2+y^2
      \end{pmatrix}} \Ocal^{\oplus 3} \rightarrow \underline{\Ecal} \rightarrow 0
    $$
    near $0\in B_0$. Then locally 
    $$\underline{\Ecal} \ncong \Ocal \oplus \underline{\Ical}$$
    for any $\underline{\Ical}$. Otherwise, $\underline{\Ical}$ must be a locally complete intersection so that 
    $$
    \underline{\Ical}=(x^3,y^3, x^2+y^2).
    $$
    which is impossible.
\end{exam}

\begin{cor}
    In general, when forming the singularity of the type $\Ocal \oplus \Ical$, the singularity formed on the bubble trees does not have to be of the type $\Ocal \oplus \Ical$.
\end{cor}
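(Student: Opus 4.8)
This is immediate from the Example that precedes the statement, which produces a single family witnessing the phenomenon; below I recall why that Example works and where the one non-formal point lies. Let $\Ecal$ be the family presented by the column $(x^{3},\ y^{3}+z^{4},\ z^{4}(x^{2}+y^{2}))^{T}\colon\Ocal\to\Ocal^{\oplus 3}$. Restricting this resolution to $B_{0}=\{z=0\}$ annihilates the two $z$-terms and stays exact (the restricted map is still injective), so $\Ecal|_{B_{0}}$ is the cokernel of $(x^{3},y^{3},0)^{T}$; the third coordinate splits off a free summand and $\coker\big(\Ocal_{B_{0}}\xrightarrow{(x^{3},y^{3})^{T}}\Ocal_{B_{0}}^{\oplus2}\big)$ is canonically the ideal $(x^{3},y^{3})$. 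Hence $\Ecal$ forms the singularity $\Ocal_{B_{0}}\oplus(x^{3},y^{3})$ with $k(\Ecal)=\dim_{\CBbb}\CBbb[x,y]/(x^{3},y^{3})$, so it is of the type treated in Theorem~\ref{Main}.

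Next one checks that $\Ecal$ is fertile and reads off its first-level bubble. The natural extension across the exceptional divisor is the homogenized resolution displayed in the Example; its restriction to $\PBbb^{2}$ has cokernel $\underline{\Ecal}$, and a Chern-class computation gives $\Delta(\underline{\Ecal})=4\,k(\Ecal)$. By Theorem~\ref{Thm1.1}(1)--(3) this forces the natural extension to be the normalized semistable one and $\widehat{\Ecal}|_{\PBbb^{1}}\cong\Ocal_{\PBbb^{1}}^{\oplus2}$ (one can also see the triviality at infinity directly: on $\PBbb^{1}=\{Z=0\}$ the last entry of the column vanishes while $X^{3},Y^{3}$ have no common zero there, so the restriction is trivial). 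Thus $\Ecal$ is non-barren with $\underline{\Ecal}$ as its bubble; that it is fertile rather than a cone will follow from the last paragraph, since by Theorem~\ref{Main} a cone family of this type has $\widehat{\Ecal}|_{\PBbb^{2}}\cong\Ocal_{\PBbb^{2}}\oplus\underline{\Ical}$ for an ideal sheaf $\underline{\Ical}$. Localizing $\underline{\Ecal}$ at its singular point $[0,0,1]$ (set $Z=1$) gives the local model $0\to\Ocal\xrightarrow{(x^{3},\,y^{3},\,x^{2}+y^{2})^{T}}\Ocal^{\oplus3}\to\underline{\Ecal}\to0$ near the origin of a $2$-disk.

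It remains to show this local sheaf is not of the form $\Ocal\oplus\underline{\Ical}$, which both finishes the fertility claim and is exactly the content of the Corollary. Suppose it were. Since $\Ocal$ is projective, the free summand of $\underline{\Ecal}$ lifts along the surjection $\Ocal^{\oplus3}\to\underline{\Ecal}$, splitting off a free rank-one summand of $\Ocal^{\oplus3}$ and leaving a two-term free resolution $0\to\Ocal\to\Ocal^{\oplus2}\to\underline{\Ical}\to0$; by Hilbert--Burch, $\underline{\Ical}$ is generated by the two entries of the syzygy column, hence is a finite-colength complete intersection. On the other hand $\mathrm{Fitt}_{2}$ is presentation-independent: from the given presentation of $\underline{\Ecal}$ it equals $(x^{3},y^{3},x^{2}+y^{2})$, while from the splitting $\underline{\Ecal}\cong\Ocal\oplus\underline{\Ical}$ it equals $\underline{\Ical}$ itself. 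So $\underline{\Ical}=(x^{3},y^{3},x^{2}+y^{2})$, which is minimally $3$-generated (one degree-two generator, and modulo it the degree-three part forces two more) and therefore not a complete intersection — a contradiction. This proves the Corollary, exhibiting a fertile family of type $\Ocal\oplus\Ical$ whose first-level bubble fails to be of type $\Ocal\oplus\Ical$ near its singularity. The only step that is not routine bookkeeping is the fertility computation, i.e.\ the identity $\Delta(\underline{\Ecal})=4k(\Ecal)$ together with $\widehat{\Ecal}|_{\PBbb^{1}}\cong\Ocal_{\PBbb^{1}}^{\oplus2}$; this is a direct computation on the blow-up and is the main thing to get right.
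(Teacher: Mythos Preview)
Your argument is correct and follows essentially the same line as the paper: both reduce the Corollary to the preceding Example, showing that the local model of $\underline{\Ecal}$ at $[0,0,1]$ cannot split as $\Ocal\oplus\underline{\Ical}$ because such a splitting would force $\underline{\Ical}$ to be a two-generated (hence complete intersection) ideal equal to $(x^3,y^3,x^2+y^2)$, which is minimally three-generated. Your write-up usefully spells out two points the paper leaves implicit --- the lift of the free summand to obtain a length-two resolution of $\underline{\Ical}$, and the identification $\underline{\Ical}=(x^3,y^3,x^2+y^2)$ via the presentation-independence of $\mathrm{Fitt}_2$ --- and your verification of fertility through $\Delta(\underline{\Ecal})=4k(\Ecal)=36$ together with Theorem~\ref{Thm1.1} is exactly the right mechanism (the paper simply asserts fertility). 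One cosmetic remark: invoking Hilbert--Burch is slightly heavier than needed, since the two-term resolution $0\to\Ocal\to\Ocal^{\oplus 2}\to\underline{\Ical}\to 0$ already exhibits $\underline{\Ical}$ as two-generated, and finite colength in a two-dimensional regular local ring then forces the generators to be a regular sequence.
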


So far, we have only focused on studying families forming singularities of the type $\Ocal \oplus \Ical$. Of course, there exists fertile families forming other type of singularities in general. For this, we give a simple example forming the singularity $(x,y)\oplus (x,y)$ which carries a smooth bubble of charge $2$.

\begin{exam}
 The following fertile family forms the singularity $(x,y)\oplus (x,y)$
     $$
    0\rightarrow \Ocal^{\oplus 2} \xrightarrow{\begin{pmatrix}
        x&z\\
        y&z\\
        z&x\\
        0&y
    \end{pmatrix}} \Ocal^{\oplus 4} \rightarrow \Ecal \rightarrow 0
    $$
    where the bubble tree has height one with the bubble being stable. More precisely, the bubble is given by 
    $$
    0\rightarrow \Ocal^{\oplus 2} \xrightarrow{\begin{pmatrix}
        X&Z\\
        Y&Z\\
        Z&X\\
        0&Y
    \end{pmatrix}} \Ocal(1)^{\oplus 4} \rightarrow \underline{\Ecal}(2) \rightarrow 0.
    $$
\end{exam}

\begin{rmk}
    The notion of bubbles and techniques developed in this paper for dealing with singularity of the type $\Ocal \oplus \Ical$ crucially uses the special role of the $\Ocal$ factor. The complete picture for more general classes of singularities seems to depend a lot on the type of the singularities and the families. 
\end{rmk}

\bibliography{papers}

\end{document}